\newcounter{my_enumerate_counter}
\newcommand{\pushcounter}{\setcounter{my_enumerate_counter}{\value{enumi}}}
\newcommand{\popcounter}{\setcounter{enumi}{\value{my_enumerate_counter}}}
\newcommand{\qed}{\null\nobreak\hfill\ensuremath{\square}}%
\newcommand{\bb}{\mathbf b}
\newcommand{\bbF}{{\mathbb F}}
\newcommand{\bbZ}{{\mathbb Z}}
\newcommand{\bbT}{\mathbb T}
\newcommand{\bbN}{{\mathbb N}}
\newcommand{\bbC}{\mathbb C}
\newcommand{\cR}{{\mathcal R}}
\newcommand{\cZ}{{\mathcal Z}}
\newcommand{\rs}{\restriction}
\newcommand{\cC}{\mathcal C}
\newcommand{\cF}{\mathcal F}
\newcommand{\cG}{\mathcal G}
\newcommand{\cB}{\mathcal B}
\newcommand{\cK}{\mathcal K}
\newcommand{\sfC}{\mathsf C}
\newcommand{\e}{\varepsilon}
\newtheorem{THM}{Theorem}
\newtheorem{claim}[theorem]{Claim}
\DeclareMathOperator{\diag}{diag}
\newcommand{\cstar}{$\mathrm{C}^*$}
\newcommand{\cst}{\mathrm{C}^*}
 \numberwithin{equation}{section}
 \DeclareMathOperator{\Ell}{Ell}
\renewcommand{\phi}{\varphi}
\renewenvironment{abstract}
 {\small
  \begin{center}
  \bfseries \abstractname\vspace{-.5em}\vspace{0pt}
  \end{center}
  \list{}{
    \setlength{\leftmargin}{.5cm}%
    \setlength{\rightmargin}{\leftmargin}%
  }%
  \item\relax}
 {\endlist}
\DeclareMathOperator{\CCR}{A}
\begin{document}

\markboth{I. Farah and N. Manhal}{Nonseparable CCR algebras}

%%%%%%%%%%%%%%%%%%%%% Publisher's Area please ignore %%%%%%%%%%%%%%
\catchline{}{}{}{}{}
%%%%%%%%%%%%%%%%%%%%%%%%%%%%%%%%%%%%%%%%%%%%%%%%%%%%%%%%%%%%%%%%%%%

\title{Nonseparable CCR algebras}

\author{Ilijas Farah}

%\thanks{{\tt This version is still fairly preliminary.}}

%\author{Ilijas Farah}
%

\address{Department of Mathematics and Statistics,
York University,
4700 Keele Street,
Toronto, Ontario, Canada, M3J
1P3} 
\address{Matemati\v cki Institut SANU\\
Kneza Mihaila 36\\
11\,000 Beograd, p.p. 367\\
Serbia\\
email: ifarah@yorku.ca\\
http://www.math.yorku.ca/$\sim$ifarah}

\author{Najla Manhal}
%\thanks{{\tt This version is still fairly preliminary.}}

%\author{Ilijas Farah}
%

\address{Department of Mathematics and Statistics,
York University,
4700 Keele Street,
Toronto, Ontario, Canada, M3J
1P3\\
najla.muhee@gmail.com}

\maketitle

\begin{abstract}
Extending a result of the first author and Katsura, we prove that for every UHF algebra $A$ of infinite type, in every uncountable cardinality $\kappa$ there are $2^\kappa$ nonisomorphic approximately matricial C*-algebras with the same $K_0$ group as $A$. These algebras are group \cstar-algebras `twisted' by prescribed canonical commutation relations (CCR), and they  can also be considered as nonseparable generalizations of noncommutative tori. 
\end{abstract}

\keywords{Nonseparable \cstar-algebras, canonical commutation relations, noncommutative tori.}

\ccode{Mathematics Subject Classification 2000: 57M25, 57M27}

\maketitle

The inspiration for  the research presented here  comes from a question of Dixmier (\cite{Dix:Some}), who asked whether Glimm's result that every separable, unital,  AM (also called matroid) \cstar-algebras is UHF  (\cite{Glimm:On}) extends to the nonseparable case. (A \cstar-algebra is \emph{approximately matricial}, or AM, if it is an inductive limit of full matrix algebras. It is \emph{uniformly hyperfinite}, or UHF, if it is a tensor product of full matrix algebras.) The question stated in this form was answered in \cite{FaKa:Nonseparable}. The main result of Glimm's paper was classification of separable UHF algebras using a smooth invariant that we now describe. For every separable, unital, AM algebra $A$ there exists a sequence $k(j)\in \bbN\cup \{\infty\}$, for $j\in \bbN$, such that (with $p(j)$, for $j\in \bbN$ being an enumeration of the primes and $M_{p^\infty}(\bbC)$ denoting $\bigotimes_\bbN M_p(\bbC)$)
\[
\textstyle A\cong \bigotimes_j M_{p(j)^{k(j)}}(\bbC). 
\]
Thus the sequence $(k(j)\vert j\in \bbN)$, identified with the generalized integer (also called `supernatural number')
\[
\textstyle n_A=\prod_j p(j)^{k(j)}
\]
is a complete isomorphism invariant for separable, unital, AM algebras.   
A generalized integer $n$ corresponds to the $K_0$ group of $A$, equal to (with the convention that $l>0$ divides $n=\prod_j p(j)^{k(j)}$ if for every prime $p(j)$, $p(j)^k$ divides $l$ implies that $k\leq k(j)$) 
\[
\bbZ[1/n]=\{k/l\vert k\in \bbZ,l> 0, l\text{ divides } n\}. 
\]
  In \cite{FaKa:NonseparableII} it was shown that for every uncountable cardinal $\kappa$ there are~$2^\kappa$ nonisomorphic AM algebras of density character $\kappa$ with the same $K_0$ as the CAR algebra~$M_{2^\infty}$. Since AM algebras are monotracial and have trivial $K_1$ groups, these algebras have the same Elliott invariant as $M_{2^\infty}$ (see e.g., \cite{Ror:Classification}).  These algebras were universal \cstar-algebras given by generators and relations. The relations were coded by a graph, in which the vertices corresponded to the generators (self-adjoint unitaries)  while the edges determined which of the generators anticommute (i.e., satisfy the relation $vw=-wv$). 

The question whether the analogous result can be proven for other UHF algebras in place of $M_{2^\infty}$  remained open. Resolving it required a convenient coding of canonical commuting relations between non-self-adjoint unitaries. 

\begin{THM} \label{T.1} Suppose that $A$ is a UHF algebra  such that $p^\infty$ divides $n_A$ for some prime $p$. Then for every uncountable cardinal $\kappa$ there are $2^\kappa$ nonisomorphic AM algebras of density character $\kappa$  with the same $K_0$ group, and even the same Elliott invariant,\footnote{For the definitions of $K_0$ and  the Elliott invariant see e.g., \cite{Ror:Classification}. It will  be used only in Lemma~\ref{L.K-theory}, and even there only implicitly.}   as $A$.   
\end{THM}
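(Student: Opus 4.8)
The plan is to present each candidate algebra as a rational noncommutative torus over a large index set and to read off a combinatorial invariant --- an alternating $\bbZ/p\bbZ$-valued form --- that the isomorphism type of the algebra remembers. The role of the hypothesis is clarified by a supernatural-number computation: if $B$ is any AM algebra of supernatural number $p^\infty$ and we form $B\otimes A$, the result is an inductive limit of full matrix algebras $M_{p^k}(\bbC)\otimes M_m(\bbC)=M_{p^km}(\bbC)$, hence AM, and its supernatural number is $p^\infty\cdot n_A$, which equals $n_A$ precisely because $p^\infty$ divides $n_A$. Thus $B\otimes A$ has $K_0\cong\bbZ[1/n_A]\cong K_0(A)$, and since a simple AM algebra is monotracial with trivial $K_1$, this forces agreement of the entire Elliott invariant with that of $A$ (this is where Lemma~\ref{L.K-theory} enters). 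So it suffices to build, for each $\kappa$, a family of $2^\kappa$ pairwise nonisomorphic simple AM algebras $B$ of supernatural number $p^\infty$ and density character $\kappa$ for which the products $B\otimes A$ remain pairwise nonisomorphic.

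For the twisted factor I would use the clock-and-shift presentation of $M_p(\bbC)$: with $\zeta=e^{2\pi i/p}$, the universal \cstar-algebra on unitaries $u,v$ with $u^p=v^p=1$ and $uv=\zeta vu$ is $M_p(\bbC)$. Given an index set $I$ of size $\kappa$ and an alternating form $c\colon I\times I\to\bbZ/p\bbZ$ (so $c(j,i)=-c(i,j)$), let $B(c)$ be the universal \cstar-algebra generated by order-$p$ unitaries $(u_i\mid i\in I)$ subject to $u_iu_j=\zeta^{c(i,j)}u_ju_i$; this is the twisted group \cstar-algebra of $\bigoplus_I\bbZ/p\bbZ$, a nonseparable rational noncommutative torus. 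When $c$ is nondegenerate, a finite-dimensional normal-form analysis shows that every finite subset of generators is contained in one spanning a finite symplectic subspace, whose associated subalgebra is a full matrix algebra; such subalgebras are cofinal and the connecting maps are unital, so $B(c)$ is simple AM of supernatural number $p^\infty$. The point flagged in the introduction appears here: for $p>2$ the relation $u_iu_j=\zeta^{c(i,j)}u_ju_i$ is genuinely \emph{oriented}, since $c$ is antisymmetric rather than symmetric, so the datum is a $\bbZ/p\bbZ$-valued alternating form and not merely a graph, and the coding must be arranged so that the finite pieces stay matricial while the orientation is retained.

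The crucial combinatorial input is that, unlike in finite dimensions, a nondegenerate alternating form on a $\kappa$-dimensional space over $\bbZ/p\bbZ$ is \emph{not} determined up to isometry by $\kappa$: for uncountable $\kappa$ there are $2^\kappa$ pairwise non-isometric such forms. A countable-dimensional nondegenerate alternating form always admits a symplectic basis, which is exactly why the phenomenon is invisible in the separable world, where all these algebras collapse to $M_{p^\infty}$. I would fix such a family $(c_\xi\mid \xi<2^\kappa)$, arranged to be pairwise non-isometric even after composing with the gauge group generated by permutations of $I$, the rescalings $u_i\mapsto \lambda_i u_i^{a_i}$ with $\lambda_i\in\bbT$ and $a_i\in(\bbZ/p\bbZ)^\times$ (which replaces $c(i,j)$ by $a_ia_j\,c(i,j)$), and the Galois action on $\zeta$. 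It then remains to prove the rigidity statement that an isomorphism $B(c_\xi)\otimes A\cong B(c_\eta)\otimes A$ forces $c_\xi$ and $c_\eta$ to be isometric modulo this gauge group; combined with the family this yields $2^\kappa$ nonisomorphic algebras, each of density character $\kappa$ since there are $\kappa$ generators and $A$ is separable.

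The main obstacle is exactly this rigidity step, and it is where the oriented coding must be paid for. Abstractly the algebra forgets its generators, so one cannot read $c$ off a presentation; instead I would use the canonical trace $\tau$ on $B(c)\otimes A$ and work in $L^2(\tau)$, where the monomials $u_x$, $x\in\bigoplus_I\bbZ/p\bbZ$, form a distinguished orthonormal system, and attempt to recover the group and the form up to gauge from algebraic-cum-tracial data. Because any such reconstruction is only approximate and local, the natural route is a reflection argument: an isomorphism restricts, on a closed unbounded set of separable elementary submodels, to a coherent family of partial isometries of the underlying forms, and the non-isometry of $c_\xi$ and $c_\eta$ modulo gauge must be leveraged to rule out any coherent family. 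Making the local reconstruction of the form canonical enough to survive the gauge symmetries --- in particular the orientation ambiguity special to $p>2$ --- is the heart of the matter and the step I expect to be hardest.
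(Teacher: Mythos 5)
Your setup coincides with the paper's: your $B(c)$ are exactly the algebras $\CCR_{(X,\Gamma,\Theta)}$ of \S\ref{S.CCR} for $\Gamma=\bigoplus_I\bbZ/p\bbZ$, the tensor trick $B\otimes A$ (using $p^\infty\mid n_A$) together with Lemma~\ref{L.K-theory} is how the paper fixes the Elliott invariant, and AM-ness via finite symplectic pieces matches Lemma~\ref{L.UHF} and the change-of-generators computations in the paper. The gap is the rigidity step, which you correctly flag as the heart of the matter but then only gesture at, and it is not a postponable verification: the reflection argument you sketch cannot work in the form stated. As you yourself observe, every countable-dimensional nondegenerate alternating form over $\bbZ/p\bbZ$ admits a symplectic basis; consequently club many separable \cstar-subalgebras of \emph{every} one of your algebras $B(c_\xi)\otimes A$ are isomorphic to $M_{p^\infty}\otimes A\cong A$, so the club filtrations of any two of them are pointwise isomorphic and carry no distinguishing information. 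Worse, the separable local isomorphisms induced by a hypothetical global isomorphism are isomorphisms of copies of $A$ and need not induce ``partial isometries of the underlying forms'' in any sense: a \cstar-isomorphism has no obligation to respect the canonical unitaries, the canonical masa, or the $\ell_2$-expansion, and for the closely related graph CCR algebras it is well known that non-isomorphic combinatorial data can produce isomorphic (indeed UHF) algebras. So ``non-isometric modulo gauge $\Rightarrow$ non-isomorphic'' is exactly the open problem your proof defers, not a step it supplies; your assertion that $2^\kappa$ pairwise non-isometric nondegenerate forms exist for every uncountable $\kappa$ is also left unproved, though that is secondary.

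The paper does not attempt to recover the bicharacter from the algebra at all. Instead it encodes an arbitrary linear order $J$ \emph{asymmetrically} into the commutation relations, setting $\Theta(g(x,0),g(y,1))=\lambda$ if $x\leq y$ and $1$ otherwise, so that the quantifier-free formula $\varphi(a,b,c,d)=\frac12\|ad-da\|$ makes the pairs $(u_{g(x,0)},u_{g(x,1)})$, $x\in J$, a $\varphi$-chain of order type $J$. Functoriality (Proposition~\ref{P.funct}) and the Downwards L\"owenheim--Skolem Theorem show this chain is weakly $(\aleph_1,\varphi)$-skeleton like in $\bbF(J)$ (and remains so in $A\otimes\bbF(J)$), and then Lemma~\ref{L.FaKa}, i.e.\ \cite[Lemma~6.4]{FaKa:NonseparableII} resting on \cite{FaSh:Dichotomy}, converts the $2^\kappa$ suitably inequivalent linear orders of cardinality $\kappa$ into $2^\kappa$ nonisomorphic algebras. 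That blackbox from Shelah's non-structure theory is precisely what substitutes for your missing rigidity argument --- it extracts an invariant robust under arbitrary isomorphisms from the coherence data you would need to control by hand. Without invoking it (or proving a genuinely new reconstruction theorem for these twisted group algebras), your proposal does not close.
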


In the proof of this theorem we use twisted group \cstar-algebras associated with canonical commutation relations (CCR). These algebras can also be considered as generalizations of noncommutative tori (see e.g., \cite[\S II.10.7.5]{Black:Operator}, \cite{rieffel1990non},  or \cite{phillips2006every}).

\section{Canonical commutation relations} \label{S.CCR} 
In this section we introduce \cstar-algebras given by canonical commutation relations (CCR, not to be confused with completely continuous representations). Suppose that $X$ is a set and 
\begin{equation}\label{Eq.Gamma}
\Gamma=\bigoplus_{\xi\in X} C_\xi
\end{equation}
 is a direct sum of cyclic groups. Such $\Gamma$ can be coded by a function $f_\Gamma\colon X \to \{2,3,\dots, \aleph_0\}$ such that $f_\Gamma(\xi)=|C_\xi|$ for all $\xi$. Thus $\Gamma$ can be presented as 
\[
\langle g(\xi): \xi\in X \vert g(\xi)^{f_\Gamma(\xi)}=1, g(\xi)g(\eta)=g(\eta)g(\xi): \xi\in X, \eta\in X\rangle. 
\]
Throughout this paper, we adopt the convention that a group $\Gamma$ has a decomposition into a direct sum of cyclic groups as in \eqref{Eq.Gamma}, and that $g(\xi)$ denotes a fixed generator of $C_\xi$. 

\begin{definition} \label{Def.CCR.triple} 
A \emph{CCR triple} is a triple $(X,\Gamma,\Theta)$ where $X$ is a set with a fixed linear (i.e., total) ordering, $\Gamma=\bigoplus_{\xi\in X} C_\xi$ is a direct sum of cyclic groups, and $\Theta\colon X^2\to \bbT$ is such that for all $\xi$ and $\eta$ in $X$ the following requirements are met.  	
\begin{enumerate}[label=(CCR.\arabic*), leftmargin=5\parindent]
\item\label{B.1} $\Theta(\xi,\xi)=1$. 
\item \label{B.2} $\Theta(\xi,\eta)=\overline{\Theta(\eta,\xi)}$. 
\item \label{B.4} If $g(\xi)^m=1$ and $g(\eta)^n=1$ then $\Theta(\xi,\eta)^{\gcd(m,n)}=1$, where $\gcd(m,n)$  denotes the greatest common divisor of $m$~and~$n$. 	
\pushcounter
\end{enumerate}
\end{definition}
The index-set $X$ is included in the CCR triple $(X,\Gamma,\Theta)$ as a reminder that $\Gamma$ is taken with a fixed decomposition as in \eqref{Eq.Gamma}. The linear ordering on $X$ is used only in Lemma~\ref{L.CCR.1} and it will be suppressed throughout. Typically, $X$ will be a cardinal, a subset of a cardinal, and in any case it will be equipped with a natural linear ordering.     

The function $\Theta$ gives canonical commutation relations on $\Gamma$. In the literature (\cite[\S II.10.7.5]{Black:Operator}, \cite{rieffel1990non},  or \cite{phillips2006every}), $\Theta$ is usually given indirectly, by a skew-symmetric matrix $\theta$, so that $\Theta(\xi,\eta)=\exp( \pi i \theta(\xi,\eta))$. In the existing literature, $X$ is  finite  and the order of each generator $g(\xi)$ is assumed to be infinite. This data gives a noncommutative torus. In this situation, condition \ref{B.4} is unnecessary and the following example is included here to justify it. 

\begin{example}Suppose that unitaries $u$ and $v$ in a \cstar-algebra $A$ satisfy $u^m=1=v^n$ and $uv=\lambda vu$. Then $\lambda^{\gcd(m,n)}=1$.  
To see this, note that by induction we have $u^m v =\lambda^m v u^m$ and therefore $v=\lambda^m v$, hence $\lambda^m=1$. A proof that $\lambda^n=1$ is analogous. Since $\gcd(m,n)$ is an integral linear combination of $m$ and $n$, the conclusion follows. 
\end{example}

 In Proposition~\ref{P.existence} we will prove that  conditions \ref{B.1}--\ref{B.4} are sufficient for the existence of a CCR algebra $\CCR_{(X,\Gamma,\Theta)}$, in the sense that these relations are satisfied by some choice of unitaries on a Hilbert space of the appropriate order. In order to prove this, we will need the following (well-known) lemma. 

\begin{lemma} \label{L.lambda} Suppose $\lambda\in \bbT$ and $n\geq 2$. If $\lambda$ is a primitive $n$th root of unity,  then $M_n(\bbC)$ is generated by unitaries $v_n$ and $w_n$  such that $v_n w_n =\lambda w_n v_n$ and each one of $v_n$ and $w_n$ has order $n$. 

If $\lambda^n\neq 1$ for all $n\geq 1$, there are unitaries of infinite order $v_n$ and $w_n$ on a separable Hilbert space such that $v_n w_n=\lambda w_n v_n$. 
\end{lemma}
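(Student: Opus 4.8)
The plan is to construct the required unitaries explicitly as concrete operators, handling the two cases separately since they call for genuinely different constructions. The first case is the finite-dimensional one where $\lambda$ is a primitive $n$th root of unity, and the second is the infinite-dimensional case where $\lambda$ has infinite order.

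For the **first case**, the standard approach is to use the clock-and-shift operators (the generalized Pauli matrices), which are the canonical witnesses to the Weyl commutation relation in $M_n(\bbC)$. I would take $v_n$ to be the diagonal ``clock'' matrix $v_n = \diag(1,\lambda,\lambda^2,\dots,\lambda^{n-1})$ and $w_n$ to be the cyclic ``shift'' matrix sending the $k$th basis vector $e_k$ to $e_{k+1}$ (indices mod $n$). A direct computation on basis vectors gives $v_n w_n = \lambda w_n v_n$. Since $\lambda$ is a \emph{primitive} $n$th root of unity, the diagonal entries of $v_n$ are distinct, so $v_n$ has order exactly $n$; the shift $w_n$ is an $n$-cycle and manifestly has order $n$. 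The remaining point is that $v_n$ and $w_n$ generate all of $M_n(\bbC)$: this is a well-known fact, provable either by checking that the $n^2$ products $w_n^j v_n^k$ are linearly independent, or by invoking irreducibility of the pair together with Burnside's theorem. This case is entirely routine.

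For the **second case**, where $\lambda^n \neq 1$ for every $n \geq 1$, I would realize the unitaries on $\ell^2(\bbZ)$. Take $w$ to be the bilateral shift $w\, e_k = e_{k+1}$ and take $v$ to be the diagonal unitary $v\, e_k = \lambda^k e_k$. Then $vw\, e_k = \lambda^{k+1} e_{k+1}$ while $wv\, e_k = \lambda^k e_{k+1}$, so $vw = \lambda wv$ as required. The shift $w$ clearly has infinite order, and $v$ has infinite order precisely because $\lambda$ is not a root of unity, so no power $v^m = \id$ unless $\lambda^m = 1$, i.e. unless $m = 0$. The underlying Hilbert space $\ell^2(\bbZ)$ is separable, as required.

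\textbf{The main obstacle}, insofar as there is one, is the generation claim in the first case — verifying that $v_n$ and $w_n$ generate the \emph{full} matrix algebra rather than some proper subalgebra. Everything else reduces to short computations on basis vectors. For the generation step I expect the cleanest argument is to observe that the products $\{w_n^j v_n^k : 0 \leq j,k < n\}$ are linearly independent over $\bbC$ (this follows from the distinctness of the eigenvalues of $v_n$ together with the shifting action of $w_n$), hence span an $n^2$-dimensional subspace of $M_n(\bbC)$, which forces them to span all of $M_n(\bbC)$.
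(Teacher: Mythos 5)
Your proof is correct, and in the infinite-order case it takes a genuinely different and more elementary route than the paper. In the finite case both proofs use the same clock-and-shift pair (you merely swap which of $v_n,w_n$ is diagonal, which is harmless since your computation verifies the relation in the stated direction $v_nw_n=\lambda w_nv_n$); the only divergence is the generation step. The paper proves linear independence of the $n^2$ products by a trace argument: with $\tau$ the unique tracial state on $M_n(\bbC)$, one has $\tau(u^iv^j)\neq 0$ only when $i=j=0$, so a nontrivial dependence yields a contradiction. Your alternative---that $w_n^jv_n^k$ for distinct $j$ are supported on disjoint cyclic diagonals, and for fixed $j$ independence over $k$ follows from invertibility of the Vandermonde matrix $(\lambda^{mk})_{m,k}$ built from the distinct eigenvalues of $v_n$---is only sketched, but it completes routinely and avoids any appeal to traces. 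In the second case the two proofs really differ: the paper realizes the relation inside the irrational rotation algebra $C(\bbT)\rtimes_\alpha\bbZ$ and must then invoke simplicity of that algebra to know that a nondegenerate representation keeps the unitaries faithful (in particular of infinite order), whereas you write down the bilateral shift $w\,e_k=e_{k+1}$ and the diagonal unitary $v\,e_k=\lambda^k e_k$ on $\ell^2(\bbZ)$ and check everything by hand; this is self-contained, needs no crossed products or simplicity theorems, and gives the infinite orders directly from $\lambda$ not being a root of unity. What the paper's route buys in exchange is the contextual link to rotation algebras, which fits its broader theme of noncommutative tori, but as a proof of the stated lemma your construction is leaner.
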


\begin{proof} Take $w_n=\diag(1,\lambda, \dots, \lambda^{n-1})$ and take $v_n$ to be the permutation unitary matrix such that $(v_n)_{i,i+1}=1$ for $1\leq i<n$ and $(v_n)_{n,1}=1$, with all other entries equal to 0. Then (with $\xi_i$, for $1\leq i\leq n$ denoting the standard basis of $\bbC^n$) we have $v_n \xi_i=\xi_{i+1}$ if $i<n$ and $v_n \xi_n=\xi_1$. Thus $v_n w_n v_n^*(\xi_i)=\lambda^i \xi_i$, and $v_n w_n v_n^*=\lambda w_n$. It remains to prove that $u_n$ and $v_n$ generate $M_n(\bbC)$. We will prove that $u^k v^l$, for $0\leq k,l<n$, are linearly independent. Otherwise, there are $k<n$ and $l<n$ such that $u^kv^l=\sum_{i<n, j<n} \lambda_{i,j} u^iv^j$, with $\lambda_{k,l}=0$. By multiplying with $(u^*)^k$ on the left and $(v^*)^l$ on the right, we obtain $1=\sum_{i<n, j<n} \lambda_{i,j} u^iv^j$, with $\lambda_{0,0}=0$. With $\tau$ denoting the unique tracial state on $M_n(\bbC)$, we have $\tau(u^iv^j)\neq 0$ if and only if $i=j=0$; contradiction. Therefore $\cst(u,v)$ has dimension (at least) $n^2$, and it is therefore equal to $M_n(\bbC)$. 

Now suppose that $\lambda^n\neq 1$ for all $n$. In this case we use the generators of the \emph{irrational rotation algebra} associated with $\lambda$ (see e.g., \cite[\S II.8.3.3~((i)]{Black:Operator}). More precisely, let $v_{\aleph_0}$ denote the standard generator of $C(\bbT)$ (i.e., $v_{\aleph_0}$ is a unitary whose spectrum is equal to $\bbT$). Let $\alpha$ be the rotation of $\bbT$ by angle $\theta$,  chosen so that $\lambda=e^{i\theta}$. In the crossed product $C(\bbT)\rtimes_\alpha \bbZ$, let $u_{\aleph_0}$ be the unitary that implements $\alpha$. Then $u_{\aleph_0}v_{\aleph_0}u_{\aleph_0}^*= \lambda v_{\aleph_0}$, as required.  Since the irrational rotation algebras are simple, any nondegenerate representation of $C(\bbT)\rtimes_\alpha \bbZ$ on a Hilbert space yields $u_{\aleph_0}$ and $v_{\aleph_0}$ as required. 
\end{proof}

The proof of the following is based on the proof of \cite[Proposition~10.1.3 (1)]{Fa:STCstar}).

\begin{proposition}\label{P.existence}
	For every CCR triple $(X,\Gamma,\Theta)$, there exists a universal \cstar-algebra $\CCR_{(X,\Gamma,\Theta)}$ given  by generators  $u_\xi$, for $\xi\in X$  and the following relations (the order function $f_\Gamma$ was defined after \eqref{Eq.Gamma}):  
\begin{multline*}
\cR(\Gamma,\Theta)=\{u_\xi u_\xi^*=1=u_\xi^* u_\xi, u_\xi u_\eta=\Theta(\xi,\eta) u_\eta u_\xi\vert \xi\in X,\eta\in X\}\\
\cup \{u_\xi^n=1\vert \xi\in X, n=f_\Gamma(\xi)\text{ is finite}\}
\end{multline*}
In this algebra, the order of the unitary $u_\xi$ is equal to the order of the generator $g(\xi)$ in $\Gamma$. 
\end{proposition}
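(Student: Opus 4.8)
The plan is to first obtain $\CCR_{(X,\Gamma,\Theta)}$ as a formal universal object and then to pin down the order of each $u_\xi$ by exhibiting a single concrete representation. Since every generator is required to be a unitary, $\|\pi(u_\xi)\|=1$ in every $*$-representation $\pi$ of the relations $\cR(\Gamma,\Theta)$, so the relations are admissible: the universal seminorm $\|a\|=\sup_\pi\|\pi(a)\|$ on the free $*$-algebra on the $u_\xi$ modulo $\cR(\Gamma,\Theta)$ is finite on every element, and completing in it yields a \cstar-algebra with the stated universal property. The remaining, and main, content is to show that this seminorm is large enough, namely that the order of $u_\xi$ is exactly $f_\Gamma(\xi)$. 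The relation $u_\xi^{f_\Gamma(\xi)}=1$ already forces this order to divide $f_\Gamma(\xi)$ in the finite case, so it suffices to produce one representation in which the order of $\pi(u_\xi)$ is no smaller (and infinite when $g(\xi)$ has infinite order).

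To build such a representation I would pass through the group $\Gamma$. First extend $\Theta$ to a bicharacter $\beta\colon\Gamma\times\Gamma\to\bbT$ by setting $\beta(g(\xi),g(\eta))=\Theta(\xi,\eta)$ and extending bimultiplicatively; here \ref{B.1} and \ref{B.2} make $\beta$ alternating and antisymmetric, while \ref{B.4} is exactly what is needed for $\beta$ to be well defined on the finite cyclic summands, since if $g(\xi)$ has order $m$ then the $\gcd$-divisibility gives $\Theta(\xi,\eta)^m=1$, so $\Theta(\xi,\eta)^a$ depends only on $a \bmod m$. Using the fixed linear order on $X$, I would then split $\beta$ into a $2$-cocycle $\sigma\colon\Gamma\times\Gamma\to\bbT$ defined on $g=\sum_\xi a_\xi g(\xi)$ and $h=\sum_\eta b_\eta g(\eta)$ by the ordered product $\sigma(g,h)=\prod_{\xi<\eta}\Theta(\xi,\eta)^{a_\xi b_\eta}$. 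A direct computation shows that $\sigma$ is a normalized $2$-cocycle whose antisymmetrization $\sigma(g,h)\overline{\sigma(h,g)}$ equals $\beta(g,h)$; in particular $\sigma(g(\xi),g(\eta))\overline{\sigma(g(\eta),g(\xi))}=\Theta(\xi,\eta)$, while $\sigma(kg(\xi),g(\xi))=1=\sigma(g,0)$ for all $k$ and $g$.

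With $\sigma$ in hand, take the $\sigma$-twisted left regular representation on $\ell^2(\Gamma)$, $\lambda^\sigma_g\delta_h=\sigma(g,h)\delta_{g+h}$, for which each $\lambda^\sigma_g$ is unitary and $\lambda^\sigma_g\lambda^\sigma_{g'}=\sigma(g,g')\lambda^\sigma_{g+g'}$. Setting $\pi(u_\xi)=\lambda^\sigma_{g(\xi)}$ realizes all of $\cR(\Gamma,\Theta)$: the commutation relation is the antisymmetrization identity, and $\pi(u_\xi)^k=\lambda^\sigma_{kg(\xi)}$ because the relevant cocycle values are trivial, so $\pi(u_\xi)^{f_\Gamma(\xi)}=\lambda^\sigma_0=1$ when $f_\Gamma(\xi)$ is finite. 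Crucially, if $kg(\xi)\neq 0$ in $\Gamma$ then $\lambda^\sigma_{kg(\xi)}\delta_0=\delta_{kg(\xi)}$ is orthogonal to $\delta_0$, so $\pi(u_\xi)^k\neq 1$; hence $\pi(u_\xi)$ has order exactly $f_\Gamma(\xi)$. (For a single pair $\{\xi,\eta\}$ the same facts can be read off from the explicit clock-and-shift unitaries of Lemma~\ref{L.lambda}, and the twisted regular representation is precisely the simultaneous, globally consistent version.) The main obstacle is exactly this global consistency — arranging one representation that satisfies every pairwise relation at once while keeping each generator of full order — and it is here that the cocycle identity, and therefore conditions \ref{B.1}--\ref{B.4}, are essential; once $\sigma$ is verified to be a cocycle, both the universal property and the computation of the orders are routine.
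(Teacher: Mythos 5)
Your proof is correct, but it takes a genuinely different route from the paper's. The paper never exhibits a single representation of all of $\cR(\Gamma,\Theta)$: it invokes the criterion that a universal \cstar-algebra exists once every finite subset of the relations is representable (\cite[Lemma~2.3.11]{Fa:STCstar}), and for a finite set of generators it realizes the relations on a direct sum $\bigoplus_{i\leq j}H_{i,j}$ of one Hilbert space per pair of generators (including the diagonal), placing the clock-and-shift unitaries of Lemma~\ref{L.lambda} (or the irrational rotation generators, in the infinite-order case) in the $(i,j)$-summand and the identity elsewhere; the order of each generator is then the least common multiple of its orders across the summands, which \ref{B.4} identifies with the order of $g(\xi)$. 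You instead split the bicharacter into a bimultiplicative $2$-cocycle $\sigma$ using the linear order on $X$ and take the $\sigma$-twisted regular representation on $\ell_2(\Gamma)$, which satisfies all the relations simultaneously with every generator of full order. This is the twisted group \cstar-algebra picture of \cite{slawny1972factor} and \cite{manuceau1968c} that the paper's Remark explicitly mentions and deliberately avoids in favour of a self-contained presentation. Your route buys several things: the extension of $\Theta$ to a bicharacter controlling commutation of all the $u_g$ (Lemma~\ref{L.CCR.1}) falls out of the identity $\lambda^\sigma_g\lambda^\sigma_h=\sigma(g,h)\lambda^\sigma_{g+h}$; Lemma~\ref{L.lambda} is not needed at all; and your $\ell_2(\Gamma)$ with the vector $\delta_0$ is exactly the GNS space of the canonical trace that the paper constructs separately later (Lemmas~\ref{L.tracial} and~\ref{L.ell2}, Proposition~\ref{P.Conditional}), so the trace $a\mapsto\langle a\delta_0,\delta_0\rangle$ and its positive-definiteness on $\bbC_\Theta\Gamma$ come for free. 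What the paper's approach buys is elementarity: finite-dimensional pieces and no well-definedness check for a cocycle on an infinite group. One point you share with the paper and should flag: both arguments need $\Theta(\xi,\eta)^m=1$ whenever $g(\xi)$ has finite order $m$, for \emph{every} $\eta$, including those with $g(\eta)$ of infinite order; as literally stated, \ref{B.4} is vacuous in that mixed case, so your claim that the ``gcd-divisibility'' gives well-definedness of $\beta$ (like the paper's corresponding step about least common multiples of orders) implicitly reads \ref{B.4} with the convention $\gcd(m,\aleph_0)=m$.
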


\begin{proof} Since all of the generators are unitaries (and therefore of norm~1), it suffices to prove that every $F\Subset \cR(\Gamma,\Theta)$\footnote{By $F\Subset X$ we abbreviate the assertion that $F$ is a finite subset of $X$.}  is represented in some \cstar-algebra such that for every $\xi$ such that $u_\xi$ occurs in $F$ the order of $u_\xi$ is equal to the order of $g(\xi)$  (this is well-known, see e.g., \cite[Lemma~2.3.11]{Fa:STCstar}). Let $G=\{\xi\vert g(\xi)$ occurs in $F\}$. We can re-enumerate $G$ and identify it with   $n=\{0,\dots, n-1\}$,  with $n=|G|$. Let   $\Gamma'=\prod_{i<n} C_i$. 

For $i<j<n$, let $H_{i,j}$ be the Hilbert space of dimension (with  $\aleph_0=\min\emptyset$)
\[
d(i,j)=\begin{cases}
\min\{d\vert \Theta(g(i), g(j))^d=1\}, \text{ if } i<j.\\
\min\{d\vert g(i)^d=1\}, \text{ if } i=j. 	
\end{cases}
\]
Let $H=\bigoplus_{0\leq i\leq j<n} H_{ij}$. Fix $k<n$. For $i<j<n$ we will define a unitary $u_{i,j,k}$ on  $H_{i,j}$ as follows (using the notation from Lemma~\ref{L.lambda})
\[
u_{i,j,k}=
\begin{cases} 
	1, \text{ if } k\notin \{i,j\},\\
	v_{d(i,j)}, \text{ if $k=i$ and $i<j$}\\
	w_{d(i,j)}, \text{ if $k=j$ and $i<j$},\\
	v_{d(i,j)}, \text{ if $i=j=k$}.
\end{cases}
\]
Let $u_k=\bigoplus_{i<j<n} u_{i,j,k}$. This is a unitary whose order is the least common multiple of the orders of $u_{i,j,k}$, for $i<j<n$. These orders range over the orders of $\Theta(g(k), g(l))$, for $l<n$, and the order of $g(k)$ (in the case when $i=j=k$). Therefore \ref{B.4} implies that $u_k$ is a unitary of the same order as $g(k)$. 

\begin{claim} We have  $u_k u_l =\Theta(g(k),g(l))u_l u_k$ for all $k<n$ and $l<n$. 
\end{claim} 

\begin{proof} If $k=l$ then this is a consequence of \ref{B.1}, $\Theta(g(k),g(k))=1$. Assume $k\neq l$.  Fix for a moment $i<j<n$. If $\{i,j\}\neq \{k,l\}$, then a glance at the definitions of $u_{i,j,k}$ and $u_{i,j,l}$ confirms that $u_{i,j,k}$ and $u_{i,j,l}$ commute. 
Therefore for $k<l$ we have that $u_k u_l=\lambda u_l u_k$ if and only if $u_{k,l,k} u_{k,l,l} = \lambda u_{k,l,l} u_{k,l,k}$.   

Fix $i<j$ and suppose that  $\{i,j\}=\{k,l\}$.  If  $k<l$, then the choice of $u_{i,j,k}$ and $u_{i,j,l}$ implies that $u_{i,j,k} u_{i,j,l}=\Theta(g(k), g(l)) u_{i,j,l} u_{i,j,k}$. Similarly, if $k>l$, then the definition  and  \ref{B.2} imply  
\[
u_{i,j,k} u_{i,j,l}=\overline{\Theta(g(l), g(k))} u_{i,j,l} u_{i,j,k}=\Theta(g(k), g(l)) u_{i,j,l} u_{i,j,k}.
\]
 By the previous paragraph, this completes the proof. 
\end{proof}

This completes the proof that every $F\Subset \cR(\Gamma,\Theta)$ is represented  in some \cstar-algebra. By \cite[Lemma~2.3.11]{Fa:STCstar}, there exists a universal \cstar-algebra given  by generators $\cG(\Gamma)$ and relations $\cR(\Gamma,\Theta)$.  By  construction, the order of $u_\xi$ is equal to the order of $g(\xi)$ for all $\xi$. 
\end{proof}

Suppose that $(X,\Gamma,\Theta)$ is a CCR triple.   We associate elements of $\Gamma$ with finite products of the form 
\begin{equation}\label{Eq.g}
\textstyle\prod_\xi g(\xi)^{m(\xi)}
\end{equation}
 where $0\leq m(\xi)<f_\Gamma(\xi)$ for all $\xi$ and $m(\xi)=0$ for all but finitely many~$\xi$. For $g$ as in \eqref{Eq.g}, in $\CCR_{(X,\Gamma,\Theta)}$ consider the unitary 
 \begin{equation}	\label{Eq.ug}
 \textstyle u_g=\prod_{\xi} u_\xi^{m(\xi)}
 \end{equation}
(in this product the generators $u_\xi$ are taken in the order increasing in the previously fixed linear ordering of $X$). 
In Lemma~\ref{L.CCR.1} we show how to extend $\Theta$ to a bicharacter on~$\Gamma$ and that this bicharacter (also denoted $\Theta$) controls the commutation on all $u_g$, for $g\in \Gamma$.  

\begin{lemma} \label{L.CCR.1} Suppose that $(X,\Gamma,\Theta)$ is a CCR triple. Extend $\Theta$ to $\Gamma$ by letting  
\[
\textstyle \Theta(\prod_\xi g(\xi)^{m(\xi)},\prod_\eta g(\eta)^{(n(\eta)})
=\prod_{\xi,\eta} \Theta(\xi,\eta)^{m(\xi)n(\eta)}. 
\]
Then the unitaries in $\CCR_{(X,\Gamma,\Theta)}$ defined in \eqref{Eq.ug} satisfy 
\[
u_g u_h =\Theta(u_g,u_h) u_h u_g.
\]
\end{lemma}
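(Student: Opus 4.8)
The plan is to verify the commutation relation $u_g u_h = \Theta(g,h)\, u_h u_g$ by moving the generators past one another one factor at a time, collecting a phase $\Theta(\xi,\eta)$ for each transposition, and checking that the product of all these phases equals the extended bicharacter value $\prod_{\xi,\eta}\Theta(\xi,\eta)^{m(\xi)n(\eta)}$. Concretely, write $g=\prod_\xi g(\xi)^{m(\xi)}$ and $h=\prod_\eta g(\eta)^{n(\eta)}$, so that by \eqref{Eq.ug} we have $u_g=\prod_\xi u_\xi^{m(\xi)}$ and $u_h=\prod_\eta u_\eta^{n(\eta)}$, with both products taken in increasing order of the fixed linear ordering on $X$. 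The quantity $u_g u_h$ is then a product $\bigl(\prod_\xi u_\xi^{m(\xi)}\bigr)\bigl(\prod_\eta u_\eta^{n(\eta)}\bigr)$, and the goal is to reorder the right-hand block of $u_\eta$'s to sit in front, recovering $u_h u_g$ up to a global scalar.

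First I would record the elementary single-pair commutation fact that follows from the defining relations $u_\xi u_\eta=\Theta(\xi,\eta)u_\eta u_\xi$ of $\CCR_{(X,\Gamma,\Theta)}$, namely that for any exponents $a,b\ge 0$ one has $u_\xi^a u_\eta^b = \Theta(\xi,\eta)^{ab}\, u_\eta^b u_\xi^a$. This is a routine double induction: commuting a single $u_\eta$ past $u_\xi^a$ yields the factor $\Theta(\xi,\eta)^a$ (since each of the $a$ transpositions contributes $\Theta(\xi,\eta)$, using that scalars are central), and then repeating $b$ times gives $\Theta(\xi,\eta)^{ab}$. The same bookkeeping must be done for the case $\xi>\eta$ and $\xi<\eta$; condition \ref{B.2}, $\Theta(\xi,\eta)=\overline{\Theta(\eta,\xi)}$, together with $\Theta(\xi,\xi)=1$ from \ref{B.1}, guarantees the phases are consistent regardless of the order in which we push factors, and in particular that a generator commutes with its own powers.

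Next I would carry out the global reordering. Starting from $u_g u_h=\prod_\xi u_\xi^{m(\xi)}\cdot\prod_\eta u_\eta^{n(\eta)}$, I move the entire block $\prod_\eta u_\eta^{n(\eta)}$ leftward past $\prod_\xi u_\xi^{m(\xi)}$, one $u_\eta^{n(\eta)}$-block at a time and, within that, past one $u_\xi^{m(\xi)}$-block at a time. Each such elementary move contributes the scalar $\Theta(\xi,\eta)^{m(\xi)n(\eta)}$ by the single-pair fact above, and since all these scalars are central and commute, the total accumulated phase is the ordered product $\prod_{\xi,\eta}\Theta(\xi,\eta)^{m(\xi)n(\eta)}$, which by definition is exactly the extended value $\Theta(g,h)=\Theta(u_g,u_h)$. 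After the reordering the generators are back in increasing order, so the remaining operator is literally $u_h u_g$, completing the computation $u_g u_h=\Theta(g,h)u_h u_g$.

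The only real care needed — and the main place where an error could creep in — is the bookkeeping of which pairs $(\xi,\eta)$ occur and with which sign: when $\xi$ and $\eta$ coincide the contribution is trivial by \ref{B.1}, and the diagonal and cross terms must be aggregated so that the product over all ordered pairs matches the defining formula for the extended $\Theta$ rather than, say, a product over unordered pairs or a single triangular half. I expect this indexing check to be the genuine (if modest) obstacle; it is cleanly resolved by noting that the definition of the extended bicharacter already ranges over \emph{all} pairs $(\xi,\eta)$, so no symmetrization factor is introduced, and the self-pairs contribute the identity. Everything else is a formal consequence of centrality of $\bbT$-scalars and the single-generator relations supplied by Proposition~\ref{P.existence}.
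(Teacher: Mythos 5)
Your proof is correct and takes essentially the same approach as the paper: the paper dismisses this lemma with the single line that it is ``a straightforward induction on the sum of the lengths of the words $g$ and $h$,'' and your block-by-block reordering with phase bookkeeping is precisely the content of that induction. The single-pair fact $u_\xi^a u_\eta^b=\Theta(\xi,\eta)^{ab}u_\eta^b u_\xi^a$, the centrality of the $\bbT$-valued scalars, and the observation that diagonal pairs contribute trivially by \ref{B.1} are exactly what the inductive step would verify.
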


\begin{proof} The proof is a straightforward induction on the sum of the lengths of the words $g$ and $h$. 
\end{proof}

\begin{remark}`Twisted'  
\cstar-algebras associated to an abelian group and a bicharacter have been studied in \cite{slawny1972factor} and \cite{manuceau1968c}. Instead of CCR relations given by a bicharacter $\Theta$ and  
\begin{equation*}
	u_g u_h =\Theta(g,h) u_h u_g, 
\end{equation*}
as in our setting, these authors started from an appropriate  function $\bb\colon \Gamma^2\to \bbT$ and considered the relations
\begin{equation*}
	u_g u_h =\bb(g,h) u_{hg}.  
\end{equation*}
As pointed out e.g., in \cite[Remark~1.2]{phillips2006every} or \cite[Section 4]{Rieff:Projective}, our $\CCR_{(X,\Gamma,\Theta)}$ can be recast as a \cstar-algebra given by $\Gamma$ and  $\bb$ and vice versa. In both \cite[Theorem~3.6]{slawny1972factor} and \cite[\S 2.2.3]{marcoux2013abelian} it was proven that if $\bb$ is nondegenerate (i.e., if $\bb(g,h)=1$ for all $h\in \Gamma$ implies $g=e$) then the algebra associated to $\Gamma$ and $\bb$ is simple. Using this result would have shortened our arguments somewhat, but we chose to give a self-contained presentation.   
\end{remark}

\section{Functoriality} 
\label{S.Funct}

Consider the category of CCR triples $(X,\Gamma,\Theta)$. The morphisms in this category,  $\varphi\colon (X,\Gamma,\Theta)\to (X',\Gamma',\Theta')$, are injective homomorphisms  from $\Gamma$ into $\Gamma'$ which preserve  the CCR relations (as extended to the entire group in Lemma~\ref{L.CCR.1}) i.e., $\Theta'(\varphi(g),\varphi(h))=\Theta(g,h)$, for all $g$~and~$h$ in $\Gamma$. 
 We will prove that the universal construction $\CCR_{(X,\Gamma,\Theta)}$ is functorial in the subcategory of CCR triples associated with locally finite groups.\footnote{A group is called \emph{locally finite} if every finitely generated subgroup is finite. For abelian groups this is equivalent to every element having finite order.}

\begin{proposition}\label{P.funct} If $\Gamma$ is a locally finite group and $(X,\Gamma,\Theta)$ and  $(X',\Gamma',\Theta')$ are CCR triples, then every  morphism 
\[
\varphi\colon (X,\Gamma,\Theta)\to (X',\Gamma',\Theta')
\]
 gives a unique injective $^*$-homomorphism   $\Psi_\varphi\colon \CCR_{(X',\Gamma',\Theta')} \to \CCR_{(X,\Gamma,\Theta)}$ such that $\Psi_\varphi(u_g)=u_{\varphi(g)}$ for all $g\in \Gamma$. 
\end{proposition}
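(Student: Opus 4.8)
The plan is to invoke the universal property of $\CCR_{(X,\Gamma,\Theta)}$ recorded in Proposition~\ref{P.existence}: to produce $\Psi_\varphi$ it suffices to exhibit, inside $\CCR_{(X',\Gamma',\Theta')}$, a family of unitaries indexed by the generators of $\Gamma$ satisfying the defining relations $\cR(\Gamma,\Theta)$. The natural candidate assigns to the generator $u_\xi$ the unitary $u_{\varphi(g(\xi))}$ attached to $\varphi(g(\xi))\in\Gamma'$ via \eqref{Eq.ug}, so that $\Psi_\varphi(u_g)=u_{\varphi(g)}$. First I would check the commutation relations: by Lemma~\ref{L.CCR.1} applied in $\CCR_{(X',\Gamma',\Theta')}$ we have $u_{\varphi(g(\xi))} u_{\varphi(g(\eta))}=\Theta'(\varphi(g(\xi)),\varphi(g(\eta)))\,u_{\varphi(g(\eta))} u_{\varphi(g(\xi))}$, and since $\varphi$ is a morphism of CCR triples the scalar equals $\Theta(g(\xi),g(\eta))=\Theta(\xi,\eta)$, which is precisely the relation demanded by $\cR(\Gamma,\Theta)$.

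The step I expect to be the main obstacle is verifying the finite-order relations $u_{\varphi(g(\xi))}^{\,f_\Gamma(\xi)}=1$ whenever $f_\Gamma(\xi)=n$ is finite. Because $\varphi$ is an injective homomorphism, $\varphi(g(\xi))$ has the same order $n$ in $\Gamma'$ as $g(\xi)$ has in $\Gamma$, so the real content is that the unitary $u_h$ attached to a group element $h$ of finite order $n$ itself satisfies $u_h^n=1$. This is where local finiteness of $\Gamma$ enters: it lets me reduce to the finite subgroup generated by the support of $h$, where $\CCR$ is a finite-dimensional matricial algebra, and then use condition~\ref{B.4} to control the commutator scalars produced when $u_h^n=(\prod_\eta u_\eta^{p(\eta)})^n$ is expanded via the commutation relations. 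I would isolate this as a separate lemma asserting that the order of $u_h$ equals the order of $h$ for every $h\in\Gamma$, verifying that the accumulated powers of the $\Theta$-scalars are forced to be trivial by \ref{B.4}.

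Granting that lemma, the universal property yields a $^*$-homomorphism with $\Psi_\varphi(u_\xi)=u_{\varphi(g(\xi))}$, and uniqueness is immediate since the $u_\xi$ generate $\CCR_{(X,\Gamma,\Theta)}$. The identity $\Psi_\varphi(u_g)=u_{\varphi(g)}$ for an arbitrary $g=\prod_\xi g(\xi)^{m(\xi)}$ then follows by applying $\Psi_\varphi$ to the defining product \eqref{Eq.ug} and comparing with the corresponding product for $\varphi(g)=\prod_\xi \varphi(g(\xi))^{m(\xi)}$, matching the two via Lemma~\ref{L.CCR.1}; here one checks that the chosen order of multiplication together with the bicharacter bookkeeping produces no spurious scalar, so the two expressions agree on the nose.

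Finally, for injectivity I would exploit that locally finite groups are amenable, so that $\CCR_{(X,\Gamma,\Theta)}$ carries the canonical tracial state $\tau$ determined by $\tau(u_g)=0$ for $g\neq e$ and $\tau(u_e)=1$, and this trace is faithful. Since $\varphi$ is injective, $\tau'\circ\Psi_\varphi$ agrees with $\tau$ on every $u_g$ and hence everywhere; faithfulness of $\tau$ then forces $\Psi_\varphi$ to be injective by the standard argument, namely $\Psi_\varphi(a)=0$ gives $\tau(a^*a)=\tau'(\Psi_\varphi(a)^*\Psi_\varphi(a))=0$ and therefore $a=0$.
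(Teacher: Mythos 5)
Your proposal takes a genuinely different route from the paper's: the paper never invokes the universal property of Proposition~\ref{P.existence} directly, but instead extends the linear map $u_g\mapsto u_{\varphi(g)}$ from the twisted group algebra across the finite-dimensional subalgebras $A_F$ of Lemma~\ref{L.AF} and then shows that $\ker\Psi$ meets every $A_F$ trivially; you check the relations $\cR(\Gamma,\Theta)$ on the candidate generators and get injectivity from the faithful trace of Lemma~\ref{L.tracial} (that last step is correct, and is a clean alternative to the paper's ending). The problem is the lemma on which your existence argument rests: it is false. The order of $u_h$ need \emph{not} equal the order of $h$ for general $h\in\Gamma'$. Concretely, let $\Gamma'=(\bbZ/2\bbZ)^2$ with generators $a,b$ and $\Theta'(a,b)=-1$; this is a legitimate CCR triple, since $(-1)^{\gcd(2,2)}=1$ verifies \ref{B.4}, and $\CCR_{(X',\Gamma',\Theta')}\cong M_2(\bbC)$. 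The element $h=ab$ has order $2$, yet $u_h=u_au_b$ satisfies $u_h^2=u_au_bu_au_b=-u_a^2u_b^2=-1$, so $u_h$ has order $4$. Condition \ref{B.4} constrains only the values of $\Theta$ at pairs of distinguished generators; it has no power over the phases that accumulate when $u_h^n$ is expanded, which is exactly what your proposed lemma would need. The same phenomenon undoes the ``no spurious scalar'' claim in your third paragraph: a morphism preserves only the antisymmetrized bicharacter $\Theta$, not the $2$-cocycle implicit in the normal-ordered products \eqref{Eq.ug}, so even when a $^*$-homomorphism with the right values on the $u_\xi$ exists, one gets $\Psi_\varphi(u_g)=\mu\,u_{\varphi(g)}$ with a phase $\mu\neq 1$ for some $g$ (take $\varphi(a)=a$, $\varphi(b)=ab$ in $(\bbZ/n\bbZ)^2$ with $n\geq 3$ odd; then $\Psi_\varphi(u_{b^2})=\overline\lambda\, u_{\varphi(b^2)}$).

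In fairness, you got stuck exactly where the statement itself is problematic. With $\Gamma=\bbZ/2\bbZ$, trivial $\Theta$, and $\varphi(g(0))=ab$ as above, $\varphi$ is an injective $\Theta$-preserving homomorphism (note $\Theta'(ab,ab)=1$), yet no $^*$-homomorphism can carry the order-$2$ unitary $u_{g(0)}\in\CCR_{(X,\Gamma,\Theta)}\cong\bbC^2$ to the order-$4$ unitary $u_{ab}$; so the proposition fails for this morphism as literally stated. The paper's own proof passes over the point where this would surface, namely the unverified assertion that the restriction of $u_g\mapsto u_{\varphi(g)}$ to each $A_F$ is a $^*$-homomorphism (multiplicativity is precisely cocycle preservation, which $\Theta$-preservation does not give). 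Both the statement and your argument become correct if morphisms are additionally required to send distinguished generators $g(\xi)$ to distinguished generators $g'(\xi')$, compatibly with the linear orderings of $X$ and $X'$ --- and this covers every application of the proposition in the paper (restrictions to $\Gamma_F\leq\Gamma$, inclusions of index sets). Under that hypothesis the order relations follow at once from Proposition~\ref{P.existence}, no spurious phases arise, and your universal-property-plus-faithful-trace argument goes through; so the repair is to restrict the class of morphisms, not to attempt the order lemma.
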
 

The proof of Proposition~\ref{P.funct}  will be given after some preliminaries. 

\begin{definition} For a CCR triple $(X,\Gamma,\Theta)$, let $\bbC_\Theta\Gamma$  denote the  algebra of all finite linear combinations $\sum_g \lambda_g u_g$, with the multiplication `twisted' by $\Theta$, as 
\[
u_g u_h =\Theta(g,h) u_h u_g. 
\]
 This is a complex algebra dense in $\CCR_{(X,\Gamma,\Theta)}$. 
\end{definition}

The proof of Lemma~\ref{L.tracial} is based on the proofs of \cite[Lemma~1.5 and Theorem~1.9]{phillips2006every}, where its special case when $\Gamma$ is $\bbZ^n$ for some $n\geq 2$ was considered. An alternative proof of this lemma is analogous to the proof of \cite[Lemma~10.1.3]{Fa:STCstar}.

\begin{lemma}\label{L.tracial} Every CCR algebra $\CCR_{(X,\Gamma,\Theta)}$ has a  tracial state $\tau$ such that ($e$ denotes the identity element in $\Gamma$)
\begin{equation}\label{eq.tau}
\textstyle\tau(\sum_g \lambda_g u_g)=\lambda_e
\end{equation}
for all $\sum_g \lambda_g u_g$ in $\bbC_\Theta\Gamma$. If $\Gamma$ is locally finite, then $\tau$ is faithful. 
\end{lemma}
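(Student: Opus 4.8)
The plan is to define $\tau$ by the formula \eqref{eq.tau} on the dense $*$-subalgebra $\bbC_\Theta\Gamma$ and then show it extends to a state on $\CCR_{(X,\Gamma,\Theta)}$, verifying the tracial and faithfulness properties. First I would observe that the functional $\tau_0(\sum_g \lambda_g u_g)=\lambda_e$ is well-defined on $\bbC_\Theta\Gamma$ precisely because the $u_g$, for $g\in\Gamma$, are linearly independent in $\bbC_\Theta\Gamma$ (this is the abstract group-algebra structure, so $\lambda_e$ is unambiguously the coefficient of the identity). The candidate $\tau_0$ is clearly linear and satisfies $\tau_0(u_e)=1$, so it is unital. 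To see it extends to the completion, the cleanest route is to exhibit it as a vector state of a concrete representation: I would use the GNS-type representation $\pi$ of $\bbC_\Theta\Gamma$ on the Hilbert space $\ell^2(\Gamma)$ with orthonormal basis $\{\delta_g : g\in\Gamma\}$, where $\pi(u_g)\delta_h$ is a unimodular scalar times $\delta_{gh}$ (the scalar coming from the twisting, so that the $u_g u_h=\Theta(g,h)u_h u_g$ relations hold). Then $\tau_0(a)=\langle \pi(a)\delta_e,\delta_e\rangle$, which shows $\tau_0$ is positive and has norm~$1$, hence extends by continuity to a state $\tau$ on all of $\CCR_{(X,\Gamma,\Theta)}$.

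Next I would verify the trace property. On the dense subalgebra it suffices, by bilinearity, to check $\tau(u_g u_h)=\tau(u_h u_g)$ for the generators $u_g,u_h$. Writing $u_g u_h$ as a scalar multiple of $u_{gh}$ and $u_h u_g$ as a scalar multiple of $u_{hg}$, and using that $\Gamma$ is abelian so $gh=hg$, both $\tau(u_gu_h)$ and $\tau(u_hu_g)$ vanish unless $gh=e$, i.e. $h=g^{-1}$; in that case one checks the two unimodular prefactors agree, using \ref{B.1} and \ref{B.2}. (Equivalently, $\tau\circ\pi$ is a genuine trace because the left and right regular twisted representations commute in the expected way.) Since the tracial identity holds on a dense subalgebra and $\tau$ is continuous, it holds everywhere.

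For faithfulness under the assumption that $\Gamma$ is locally finite, I would argue that for any nonzero $a\in\CCR_{(X,\Gamma,\Theta)}$ we have $\tau(a^*a)>0$. The idea is to approximate $a$ by an element $\sum_{g\in S}\lambda_g u_g$ of $\bbC_\Theta\Gamma$ with $S\Subset\Gamma$, and then compute $\tau(a^*a)=\sum_{g}|\lambda_g|^2$ on $\bbC_\Theta\Gamma$ directly from the orthonormality of $\{\delta_g\}$ in the representation above; this is a faithful inner product on the dense subalgebra. The role of local finiteness is to guarantee that the twisted group algebra generated by any finite $S$ sits inside a finite-dimensional matrix-algebra summand where $\tau$ restricts to the (faithful) normalized trace, so that the $\ell^2$-norm $\tau(a^*a)^{1/2}$ dominates a genuine $\cst$-norm estimate and no cancellation in the completion can kill a nonzero element. \textbf{The main obstacle} I anticipate is precisely this last point: passing from faithfulness of the $\ell^2$-inner product on the dense $*$-subalgebra to faithfulness of $\tau$ on the completion is not automatic, and it is here that local finiteness must be used essentially—one needs that every element is well-approximated inside finite-dimensional subalgebras on which $\tau$ is the honest matrix trace, so that $\tau$ induces the correct $2$-norm and the GNS representation of $\tau$ is faithful. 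Without local finiteness (e.g. for $\Gamma=\bbZ$ with a rational rotation) the representation can fail to be faithful, which is why the hypothesis is needed.
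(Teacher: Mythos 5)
Your construction of $\tau$ takes a genuinely different route from the paper's. The paper builds the trace by induction over finitely generated subgroups, realizing each step as a crossed product $A_{j+1}\cong A_j\rtimes_{\alpha}C_j$ by a $\tau_j$-preserving automorphism, and then passes to the direct limit; you instead realize $\tau$ in one stroke as the vector state $\langle\pi(\cdot)\delta_e,\delta_e\rangle$ of a twisted regular representation on $\ell_2(\Gamma)$. That route is viable, and it makes positivity and norm one immediate, but the step you compress into ``a unimodular scalar $\ldots$ so that the relations hold'' is where the real work lies: one must define the scalars via a consistent normal-ordering $2$-cocycle $\bb$ on $\Gamma$ with $\Theta(g,h)=\bb(g,h)\overline{\bb(h,g)}$, verify the cocycle identity, and verify that $\pi(u_\xi)$ has the same order as $g(\xi)$ --- this last verification is exactly where condition \ref{B.4} enters, and it is invisible in your sketch. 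Relatedly, your opening claim that \eqref{eq.tau} is well defined ``because the $u_g$ are linearly independent'' is circular as written: $\bbC_\Theta\Gamma$ is by definition a subalgebra of the universal algebra $\CCR_{(X,\Gamma,\Theta)}$, and in the paper linear independence of the concrete unitaries $u_g$ is \emph{deduced} from the existence of $\tau$ (Lemma~\ref{L.finite}), not available beforehand. The fix is to construct $\pi$ first and define $\tau$ directly on $\CCR_{(X,\Gamma,\Theta)}$ as a vector state; linear independence then falls out since $\pi(u_g)\delta_e=\delta_g$ are orthonormal. Your plan essentially contains this fix, but the order of the steps matters.

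The genuine gap is in faithfulness, and you flag it yourself without closing it. Knowing $\tau(a^*a)=\sum_g|\lambda_g|^2>0$ on the dense subalgebra, and even that $\tau$ is faithful on every finite-dimensional $A(\Gamma')=\cst(u_g\vert g\in \Gamma')$ for finite $\Gamma'\leq\Gamma$, does not by itself pass to the completion; phrases like ``$\tau$ induces the correct $2$-norm'' and ``no cancellation in the completion can kill a nonzero element'' restate the problem rather than solve it. The missing argument is the paper's: since $\tau$ is tracial, $J=\{a\vert \tau(a^*a)=0\}$ is a norm-closed two-sided ideal (\cite[Lemma~4.1.3]{Fa:STCstar}); since $\Gamma$ is locally finite, $\CCR_{(X,\Gamma,\Theta)}$ is the inductive limit of the finite-dimensional algebras $A(\Gamma')$; and an ideal in an inductive limit is the closed union of its intersections with the building blocks (\cite[Proposition~2.5.3]{Fa:STCstar}). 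As $J\cap A(\Gamma')=\{0\}$ for every finite $\Gamma'$, it follows that $J=\{0\}$. (Equivalently: the quotient map $\CCR_{(X,\Gamma,\Theta)}\to \CCR_{(X,\Gamma,\Theta)}/J$ is injective, hence isometric, on each $A(\Gamma')$, hence isometric on their dense union, hence everywhere.) Finally, your closing justification for the hypothesis is incorrect: for a rational angle the rotation algebra is a homogeneous algebra over $\bbT^2$ whose canonical trace is faithful, and since all groups considered here are abelian, hence amenable, the regular representation is faithful on the full twisted group \cstar-algebra in complete generality. Local finiteness is what makes \emph{this proof} (reduction to finite-dimensional blocks) work; as the paper notes at the end of \S\ref{S.Properties}, the hypothesis is probably unnecessary, so it cannot be justified by counterexamples.
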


\begin{proof} 
Fix $F\Subset X$ and let  $\Gamma'=\bigoplus_{i\in F} C_i$. By re-enumerating we may assume $F=\{0,\dots, n-1\}$ for some $n\in \bbN$. For $m< n$ let $A_m=\cst(\{u_i\vert i\leq m\})$ and let~$\Gamma_m$ denote the subgroup of $\Gamma$ generated by $\{g(i)\vert i\leq m\}$.  Then $A_0$ is naturally isomorphic to $C(\hat \Gamma_0)$, where $\hat \Gamma_0$ is the Pontryagin dual of $\Gamma_0$. Let $\tau_0$ be the tracial state on $A_0$ such that $\tau(1)=1$ and $\tau_0(u_{g(0)})=0$. A simple Cauchy--Schwarz argument shows that this uniquely defines~$\tau_0$, and that it agrees with   \eqref{eq.tau}. By induction we will prove that for every $1\leq j\leq n$ there is a tracial state $\tau_j$ on $A_j$ such that $\tau_j\rs A_{j-1}=\tau_{j-1}$ and $\tau_j$ satisfies \eqref{eq.tau}.  

Suppose that $0\leq j<n$ and $\tau_j$ as required has been constructed so that it satisfies \eqref{eq.tau}. We will prove that  it is a tracial state.  The pertinent fact is that for every $g\in \Gamma$ we have $u_g u_{g^{-1}}=u_e = u_{g^{-1}} u_g$. For  $\sum_g \lambda_g u_g$ and $\sum_g \gamma_g u_g$ in $\bbC_{\Theta}\Gamma_j$ we have 
\[
\textstyle\tau_j( (\sum_g \lambda_g u_g)(\sum_h \gamma_h u_h))=\sum_{gh=e} \lambda_g \gamma_h 
=\tau( (\sum_h\gamma_h u_h) (\sum_g \lambda_g u_g)). 
\]
Since $\bbC_\Theta\Gamma_j$ is dense in $A_j$, this shows that $\tau_j$ is tracial. 

Conjugation by $u_{g(j+1)}$ induces an automorphism $\alpha=\alpha_{j+1}$ of $A_j$, since for every $h$  we have  $u_{g(j+1)} u_h u_{g(j+1)}^*=\Theta(g(j+1), h) u_h$. The algebra $A_{j+1}$ is therefore isomorphic to the crossed product $A_j\rtimes_{\alpha_j} C_j$ (recall that  $C_j$ is a, possibly finite,  cyclic group). A glance at \eqref{eq.tau} reveals that $\alpha$ is  $\tau_j$-preserving and therefore  we can find $\tau_{j+1}$ as required.  

We have proved that, for  every $F\Subset X$, with  $\Gamma'=\bigoplus_{\xi\in F} C_\xi$,  \eqref{eq.tau} defines a tracial state $\tau_{\Gamma'}$ on the \cstar-subalgebra $\cst(u_g\vert g\in \Gamma')$ of $\CCR_{(X,\Gamma,\Theta)}$. Since $\Gamma$ is a direct limit of its finitely generated subgroups and the tracial states $\tau_{\Gamma'}$ are compatible (in the sense that they agree on $\bbC_\Theta\Gamma$, when defined), this shows that \eqref{eq.tau} defines a tracial state $\tau$ on $\CCR(X,\Gamma, \Theta)$.  

It remains to verify that $\tau$ is faithful if $\Gamma$ is locally finite. Fix a finitely generated subgroup $\Gamma'$ of $\Gamma$. Since $\Gamma$ is locally finite, $\Gamma'$ is finite. Therefore every element of $A(\Gamma')=\cst(u_g\vert g\in \Gamma')$ is a finite linear combination of the canonical unitaries, and we can identify $A(\Gamma')$ with $\bbC_\Theta\Gamma'$.   To prove that $\tau$ is faithful on $A(\Gamma')$,  fix a nonzero  $\sum_g \lambda_g u_g$ in $\bbC_\Theta\Gamma'$. Then  (again using the fact that $u_g$ and $u_{g^{-1}}$ commute)
\[
\textstyle\tau_j( (\sum_g \lambda_g u_g)^*(\sum_g \lambda_g u_g))=\sum_g |\lambda_g|^2>0. 
\]
Therefore the restriction of $\tau$ to $A(\Gamma')$ is faithful for every finitely generated subgroup $\Gamma'$ of $\Gamma$. Let
\[
J=\{a\in \CCR_{(X,\Gamma,\Theta)}\vert \tau(a^*a)=0\}.
\]
 Since $\tau$ is a tracial state,  $J$ is a two-sided, norm-closed, ideal (\cite[Lemma~4.1.3]{Fa:STCstar}).  We have proved that   $\CCR_{(X,\Gamma,\Theta)}$ is an inductive limit of \cstar-subalgebras $A(\Gamma')$ for finite $\Gamma'\leq \Gamma$  such that the intersection of $J$ with each one of them is trivial. By a well-known property of \cstar-algebras (\cite[Proposition~2.5.3]{Fa:STCstar}), an ideal of an inductive limit is the inductive limit of its intersections with the algebras comprising the inductive system. Since $J\cap A(\Gamma')$ is trivial for all $\Gamma'$, this concludes the proof. 
\end{proof}

For a CCR triple $(X,\Gamma,\Theta)$ (with $\Gamma$ represented as a direct sum, as in \eqref{Eq.Gamma}) and $F\subseteq X$, we consider the group 
\begin{equation}\label{Eq.GammaF}
\Gamma_F=\bigoplus_{\xi\in F} C_\xi
\end{equation}
and the CCR triple $(F,\Gamma_F, \Theta_F)$.

\begin{lemma} \label{L.finite} \label{L.AF}  
Suppose that $(X,\Gamma,\Theta)$ is a CCR triple and $\Gamma$ is locally finite.  
\begin{enumerate}
\item \label{1.L.finite}  If $\Gamma$ is  finite then $A_{(X,\Gamma, \Theta)}$ is a finite-dimensional \cstar-algebra, and its dimension is $|\Gamma|$. 
\item\label{2.L.finite} The \cstar-subalgebra $\cst(u_g\vert g\in \Gamma_F)$ of $\CCR_{(X,\Gamma,\Theta)}$ is isomorphic to 
\[
A_F=\CCR_{(F,\Gamma_F,\Theta\rs \Gamma_F)}
\]
 for all $F\Subset X$. 
\item\label{3.L.finite}  $\CCR_{(X,\Gamma,\Theta)}$  is an AF algebra, and it is an inductive limit of the algebras $A_F$, for  $F\Subset X$.
\end{enumerate}
\end{lemma}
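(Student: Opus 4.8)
The plan is to base all three parts on the faithful tracial state $\tau$ of Lemma~\ref{L.tracial}, whose faithfulness is available precisely because $\Gamma$ is locally finite. The first thing I would record is that the canonical unitaries are orthonormal for the inner product $\langle a,b\rangle=\tau(b^*a)$. Since $u_g$ is a unitary and $u_gu_{g^{-1}}=u_e=1$, we have $u_g^*=u_{g^{-1}}$; reordering the product $u_{g^{-1}}u_h$ by means of the commutation relations of Lemma~\ref{L.CCR.1} gives $u_g^*u_h=c\,u_{g^{-1}h}$ for a scalar $c\in\bbT$ (a finite product of values of $\Theta$), with $c=1$ when $g=h$ because then $u_g^*u_g=1=u_e$. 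Applying \eqref{eq.tau} yields $\tau(u_g^*u_h)=\delta_{g,h}$, so by faithfulness of $\tau$ the family $\{u_g\mid g\in\Gamma\}$ is linearly independent.

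For \eqref{1.L.finite}, suppose $\Gamma$ is finite. Then $\bbC_\Theta\Gamma=\operatorname{span}\{u_g\mid g\in\Gamma\}$ is a $\ast$-subalgebra (closed under product via $u_gu_h=c\,u_{gh}$ and under $\ast$ via $u_g^*=u_{g^{-1}}$) of $\CCR_{(X,\Gamma,\Theta)}$ of dimension exactly $|\Gamma|$ by the independence just shown; being finite-dimensional, it is closed, and since it contains every generator $u_\xi=u_{g(\xi)}$ it is all of $\CCR_{(X,\Gamma,\Theta)}$. Thus the universal completion adds nothing and $\dim A_{(X,\Gamma,\Theta)}=|\Gamma|$. (A finite-dimensional $\ast$-algebra admitting a faithful $\ast$-representation, here the GNS representation of the faithful $\tau$, is automatically a C*-algebra.)

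For \eqref{2.L.finite}, fix $F\Subset X$; local finiteness makes $\Gamma_F$ finite. The generators $u_\xi$, $\xi\in F$, of $\CCR_{(X,\Gamma,\Theta)}$ satisfy the relations $\cR(\Gamma_F,\Theta\rs\Gamma_F)$, which form a subset of the defining relations of the ambient algebra. By the universal property established in Proposition~\ref{P.existence} (see \cite[Lemma~2.3.11]{Fa:STCstar}) there is a $\ast$-homomorphism $\pi\colon A_F\to\CCR_{(X,\Gamma,\Theta)}$ with $\pi(u_g)=u_g$ for all $g\in\Gamma_F$, whose image is $\cst(u_g\mid g\in\Gamma_F)$. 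By \eqref{1.L.finite} the algebra $A_F$ is finite-dimensional with basis $\{u_g\mid g\in\Gamma_F\}$, and $\pi$ sends this basis to the set $\{u_g\mid g\in\Gamma_F\}$ inside the ambient algebra, which is linearly independent by the orthonormality above; hence $\pi$ is injective, and therefore an isomorphism onto $\cst(u_g\mid g\in\Gamma_F)$. For \eqref{3.L.finite}, local finiteness gives $\Gamma=\bigcup_{F\Subset X}\Gamma_F$, a directed union of finite subgroups, so by \eqref{2.L.finite} the subalgebras $A_F$ form an upward-directed family of finite-dimensional C*-subalgebras with $A_F\subseteq A_{F'}$ for $F\subseteq F'$. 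Their union contains every $u_g$, hence the dense $\ast$-subalgebra $\bbC_\Theta\Gamma$, so $\overline{\bigcup_{F\Subset X}A_F}=\CCR_{(X,\Gamma,\Theta)}$, exhibiting it as the inductive limit of the finite-dimensional $A_F$ and thus as an AF algebra.

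The main obstacle is the injectivity in \eqref{2.L.finite}: the surjection $\pi$ is immediate from universality, but injectivity is exactly the statement that the canonical unitaries indexed by $\Gamma_F$ remain linearly independent in the ambient algebra, which is where faithfulness of $\tau$, and hence local finiteness, is indispensable. Everything else is the bookkeeping that the relations restrict correctly and that finite-dimensional $\ast$-subalgebras are automatically closed.
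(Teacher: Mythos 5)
Your proof is correct and takes essentially the same route as the paper: linear independence of the canonical unitaries via the tracial state of Lemma~\ref{L.tracial}, the universal property plus a dimension count for part~\eqref{2.L.finite}, and directedness of the family $A_F$ for part~\eqref{3.L.finite}. The one inaccuracy is in your closing commentary: faithfulness of $\tau$ is not what yields linear independence---the formula \eqref{eq.tau} alone gives $\tau\bigl(u_h^*\sum_g\lambda_g u_g\bigr)=\lambda_h$, so independence comes for free---and local finiteness is indispensable only because it makes each $\Gamma_F$ finite and hence $A_F$ finite-dimensional, which is exactly how the paper's own argument uses it (never invoking faithfulness).
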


\begin{proof} \eqref{1.L.finite} Let $u_g$, for $g\in \Gamma$, be the unitaries as defined in Lemma~\ref{L.CCR.1}. Then $A_{(X,\Gamma,\Theta)}$ is the linear span of $u_g$, for $g\in \Gamma$, and therefore it is finite-dimensional of dimension at most $|\Gamma|$. For the converse inequality, we need to prove that $u_g$, for $g\in \Gamma$, are linearly independent. Assume otherwise, so that for some $g$ and scalars $\alpha_f$ we have  $u_g=\sum_{f\neq g} \alpha_f u_f$. By multiplying the equation with $u_g^*$, we obtain $u_e=\sum_{f\neq e} \beta_f u_f$. However, the tracial state $\tau$ defined in Lemma~\ref{L.tracial} satisfies $\tau(u_e)=1$ while $\tau(u_f)=0$ for all $f\neq e$; contradiction. 

\eqref{2.L.finite} In order to prove that $\cst(u_g\vert g\in \Gamma_F)\cong A_F$, it suffices to show that the $u_g$ are linearly independent. This follows by the argument using $\tau$ as in  \eqref{1.L.finite}. 

\eqref{3.L.finite} is an immediate consequence of \eqref{2.L.finite}. 
\end{proof}

In the \cstar-algebra $\CCR_{(X,\Gamma,\Theta)}$ associated to a CCR triple consider the inner product (with $\tau$ denoting the tracial state guaranteed by Lemma~\ref{L.tracial})
\[
\langle a, b\rangle_\tau=\tau(b^*a). 
\]
The completion of the algebra with respect to this sesquilinear form is of course the GNS Hilbert space corresponding to $\tau$, and the associated norm 
\[
\|a\|_{2,\tau}=\langle a,a\rangle^{1/2}
\]
 is dominated by the operator norm (because $\|\tau\|=1$). 

\begin{lemma} \label{L.ell2} In every CCR algebra  $A=\CCR_{(X,\Gamma,\Theta)}$ such that $\Gamma$ is locally finite the unitaries $u_g$, for $g\in \Gamma$, form an orthonormal basis for the pre-Hilbert space $(A,\langle\cdot, \cdot\rangle_\tau)$. 
\end{lemma}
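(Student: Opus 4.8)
The plan is to verify the two defining properties of an orthonormal basis separately: that the family $(u_g)_{g\in\Gamma}$ is orthonormal with respect to $\langle\cdot,\cdot\rangle_\tau$, and that its linear span is dense in the $\|\cdot\|_{2,\tau}$-norm. Before either step I would record where local finiteness is used: by Lemma~\ref{L.tracial} it guarantees that $\tau$ is faithful, so that $\langle a,a\rangle_\tau=\tau(a^*a)=0$ forces $a=0$. Thus $\langle\cdot,\cdot\rangle_\tau$ is a genuine (positive definite) inner product on $A$, and $(A,\langle\cdot,\cdot\rangle_\tau)$ is a pre-Hilbert space in the first place, which is what makes the notion of orthonormal basis meaningful here.

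For orthonormality, fix $g,h\in\Gamma$ and compute $\langle u_g,u_h\rangle_\tau=\tau(u_h^*u_g)$. Since each $u_g$ is a unitary with $u_g^{-1}=u_{g^{-1}}$ (the identity $u_gu_{g^{-1}}=u_e=u_{g^{-1}}u_g$ was already used in the proof of Lemma~\ref{L.tracial}), we have $u_h^*=u_{h^{-1}}$, so $\langle u_g,u_h\rangle_\tau=\tau(u_{h^{-1}}u_g)$. The key structural fact is that the product of two canonical unitaries is again a scalar multiple of a canonical unitary: reordering the generators in $u_{h^{-1}}u_g$ by means of the commutation relations $u_\xi u_\eta=\Theta(\xi,\eta)u_\eta u_\xi$ and reducing exponents modulo $f_\Gamma(\xi)$ via $u_\xi^{f_\Gamma(\xi)}=1$ yields $u_{h^{-1}}u_g=c\,u_{h^{-1}g}$ for some $c\in\bbT$. (Working inside the finite subgroup $\langle g,h\rangle\leq\Gamma$ and invoking Lemma~\ref{L.finite}, one may equally regard this product inside the finite-dimensional algebra $A(\langle g,h\rangle)\cong\bbC_\Theta\langle g,h\rangle$.) Applying $\tau$ and using $\tau(u_f)=[f=e]$ from \eqref{eq.tau} gives $\tau(u_{h^{-1}}u_g)=c\,[h^{-1}g=e]=c\,[g=h]$. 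When $g=h$ the product is $u_g^{-1}u_g=1=u_e$, forcing $c=1$; hence $\langle u_g,u_h\rangle_\tau=\delta_{g,h}$.

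For totality, recall that the linear span of the $u_g$ is exactly the twisted group algebra $\bbC_\Theta\Gamma$, which is dense in $A$ in the operator norm. Since $\|a\|_{2,\tau}\leq\|a\|$ for every $a\in A$, operator-norm density implies $\|\cdot\|_{2,\tau}$-density, so the span of $(u_g)_{g\in\Gamma}$ is dense in $(A,\langle\cdot,\cdot\rangle_\tau)$. Together with orthonormality this shows that $(u_g)_{g\in\Gamma}$ is an orthonormal basis, as claimed. The only genuinely non-routine point is the identity $u_{h^{-1}}u_g=c\,u_{h^{-1}g}$ with $c\in\bbT$, and I do not expect it to be a real obstacle: the required reordering is exactly the computation underlying Lemma~\ref{L.CCR.1}, the reduction $u_\xi^{f_\Gamma(\xi)}=1$ is one of the defining relations, and for the off-diagonal terms the precise value of $c$ is irrelevant since it multiplies $\tau(u_f)=0$.
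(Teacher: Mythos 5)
Your proof is correct and takes essentially the same approach as the paper: the paper's proof amounts to the same computation (it shows $\|\sum_j\lambda_j u_{g(j)}\|_{2,\tau}^2=\sum_j|\lambda_j|^2$ using $\tau(u_g)=0$ for $g\neq e$, which is orthonormality in disguise) and then uses density of the span of the $u_g$ together with $\|\cdot\|_{2,\tau}\leq\|\cdot\|$ to identify the completion with $\ell_2(\Gamma)$. If anything, your explicit handling of the phase in $u_{h^{-1}}u_g=c\,u_{h^{-1}g}$ with $c\in\bbT$ is slightly more careful than the paper's corresponding assertion.
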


\begin{proof} Let $\ell_2(\Gamma)$ denote the Hilbert space with the orthonormal basis $\delta_g$, for $g\in \Gamma$. The linear map that sends $\sum_j \lambda_j u_{g(j)}$ to $\sum_j \lambda_j \delta_{g(j)}$ is an isometry from  $(A,\|\cdot\|_2)$ into $\ell_2(\Gamma)$. This is because (using the fact that  $\tau(u_g)=0$ if $g\neq e$ and $\tau(u_e)=1$)
\[
\textstyle \|\sum_j \lambda_j u_{g(j)}\|_2^2
=\tau((\sum_j \lambda_j u_{g(j)})^*(\sum_j \lambda_j u_{g(j)}))=\sum_j |\lambda_j|^2. 
\]
Since these linear combinations are dense in $\ell_2(\Gamma)$,  this linear map extends to an isometry from the $\|\cdot\|_2$-completion of $(A,\langle\cdot, \cdot\rangle_\tau)$ onto $\ell_2(\Gamma)$, and  $\delta_g$, for $g\in \Gamma$, is an orthonormal basis for $\ell_2(\Gamma)$.  
\end{proof}

\begin{proof}[Proof of Proposition~\ref{P.funct}]
	Fix locally finite groups $\Gamma$ and $\Gamma'$ and a morphism $\varphi\colon (X,\Gamma,\Theta)\to (X',\Gamma',\Theta')$.  Let $u_g$, for $g\in \Gamma$, denote the canonical unitaries in $\CCR_{(X,\Gamma,\Theta)}$ and let $v_g=u_{g(\varphi(\xi))}$, for $g\in \Gamma$, denote the corresponding unitaries in $\CCR_{(X',\Gamma',\Theta')}$.	Consider the map  $u_{g}\mapsto v_{g}$ and let $\Psi_0$ be its extension to a linear map from $C_\Theta \Gamma$ into $C_{\Theta'}\Gamma'$. We need to prove that $\Psi_0$  extends to a $^*$-homomorphism   $\Psi_\varphi\colon \CCR_{(X,\Gamma,\Theta)}\to \CCR_{(X',\Gamma',\Theta')}$. Since $\Gamma$ is locally finite, $\CCR_{(X,\Gamma,\Theta)}$ is by Lemma~\ref{L.AF} an inductive limit of finite-dimensional subalgebras $A_F$, each of which is a linear span of $\{u_g\vert g\in \Gamma_F\}$, for $F\Subset X$. For $F\Subset X$ we have  $A_F\subseteq C_\Theta\Gamma$, hence the restriction of $\Psi_1$ to $A_F$ is a $^*$-homomorphism between \cstar-algebras, and  therefore has norm 1 (\cite[Lemma~1.2.10]{Fa:STCstar}). Since $g\neq g'$ implies $v_g\neq v_{g'}$, the restriction of $\Psi_1$ to $A_F$ is injective. By Lemma~\ref{L.AF}, $A_{(X,\Gamma,\Theta)}$ is the inductive limit of algebras $A_F$, and therefore $\Psi_1$ extends to a $^*$-homomorphism $\Psi\colon A_{(X,\Gamma,\Theta)} \to A_{(X',\Gamma',\Theta')}$.    
	
	The intersection of $\ker(\Psi)$ with $A_F$ is trivial for every finite $F$, and by \cite[Lemma~4.1.3]{Fa:STCstar}, $\ker(\Psi)$ is trivial. This completes the proof.  	
	\end{proof}

In the original version of this paper it was  asserted that $\delta_g$ is a Schauder basis for $A_{(X,\Gamma,\Theta)}$. The referee pointed out that our proof of this lemma was incomplete, but that it can be replaced with the use of orthonormal basis $\delta_g$, for $g\in \Gamma$, in $\ell_2(\Gamma)$. Proposition~\ref{P.Conditional} and Corollary~\ref{C.Conditional} serve to formalize this argument.  Analogous remarks apply to  \cite[Lemma~10.2.6 (1)]{Fa:STCstar}.

If $B$ is a \cstar-subalgebra of $A$, then a linear map  $E\colon B\to A$  is a \emph{conditional expectation} if   $E(bac)=bE(a)c$ for all $a\in A$ and all $b$ and $c$ in $B$. By Tomiyama's theorem,  this is equivalent to having $E(b)=b$ for all $b\in B$ and $\|E\|=1$ (see e.g.,  \cite{BrOz:C*}).

The following proposition is related to  \cite[Lemma~10.2.7]{Fa:STCstar}, where an analogous statement for graph CCR algebras, with an additional assumption that $G$ is finite, was proved. 

\begin{proposition}\label{P.Conditional}
	Suppose that $\Gamma$ is a locally finite group and $(X,\Gamma,\Theta)$ is a CCR triple. For $G\leq \Gamma$ consider the \cstar-subalgebra of $\CCR_{(X,\Gamma,\Theta)}$,\footnote{If $G=\Gamma_F$ as in \eqref{Eq.GammaF}, then $A(G)$ is $A_F$ as defined in the statement of Lemma~\ref{L.finite}.}  
	\[
	A(G)=\cst(u_g\vert g\in G). 
	\]
	For subgroups $G\leq H\leq \Gamma$ there exists a conditional expectation 
	\[
	E_{HG}\colon A(H) \to A(G). 
	\]
	This system of conditional expectations commutes: if $G\leq H\leq K\leq \Gamma$ then $E_{KG}=E_{HG}\circ E_{KH}$.  
\end{proposition}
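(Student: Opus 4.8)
The plan is to realize each $E_{HG}$ as a compression by a Jones-type projection inside the GNS representation attached to the faithful trace $\tau$ of Lemma~\ref{L.tracial}, and then to invoke Tomiyama's theorem to conclude that the resulting norm-one projection is a conditional expectation.

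First I would set up the GNS representation $\pi\colon \CCR_{(X,\Gamma,\Theta)}\to \cB(\ell_2(\Gamma))$ afforded by $\tau$. By Lemma~\ref{L.ell2} its Hilbert space is $\ell_2(\Gamma)$ under $u_g\mapsto \delta_g$, and since $\tau$ is faithful (as $\Gamma$ is locally finite), $\pi$ is faithful and hence isometric. For a subgroup $G\leq\Gamma$ put $\mathcal H_G=\overline{\mathrm{span}}\{\delta_g\mid g\in G\}=\ell_2(G)$ and let $e_G$ be the orthogonal projection of $\ell_2(\Gamma)$ onto $\mathcal H_G$. Recalling that the product of two canonical unitaries is a scalar multiple of a canonical unitary, $u_g u_h=\sigma(g,h)u_{gh}$ with $\sigma(g,h)\in\bbT$, one checks that $\pi(u_g)$ preserves $\mathcal H_G$ for every $g\in G$; therefore the restriction of $\pi\rs A(G)$ to $\mathcal H_G$ is unitarily equivalent to the GNS representation of $(A(G),\tau\rs A(G))$ with cyclic vector $\delta_e$, which is faithful (hence isometric) because $\tau\rs A(G)$ is faithful.

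Next I would define $E_{HG}$ on the dense subalgebra $\bbC_\Theta H\subseteq A(H)$ (the linear span of $\{u_h\mid h\in H\}$) by $E_{HG}(\sum_h\lambda_h u_h)=\sum_{h\in G}\lambda_h u_h$ and identify it with the compression $a\mapsto e_G\pi(a)e_G$. Evaluating $e_G\pi(u_h)e_G$ on a basis vector $\delta_{g'}$ with $g'\in G$, and using that $hg'\in G$ if and only if $h\in G$, gives $e_G\pi(u_h)e_G=\pi(u_h)e_G$ when $h\in G$ and $e_G\pi(u_h)e_G=0$ when $h\notin G$. By linearity this yields $e_G\pi(a)e_G=\pi(E_{HG}(a))e_G$ for all $a\in\bbC_\Theta H$, so in particular $E_{HG}(a)\in\bbC_\Theta G\subseteq A(G)$. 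Pulling the norm back through the isometry of the previous paragraph, $\|E_{HG}(a)\|_{A(G)}=\|\pi(E_{HG}(a))e_G\|=\|e_G\pi(a)e_G\|\leq\|\pi(a)\|=\|a\|$, so $E_{HG}$ is contractive and extends to a contractive linear map $A(H)\to A(G)$. It fixes each $u_g$ with $g\in G$, hence restricts to the identity on the dense $\bbC_\Theta G$ and thus on $A(G)$; being a norm-one idempotent onto the \cstar-subalgebra $A(G)$, it is a conditional expectation by Tomiyama's theorem (\cite{BrOz:C*}).

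Finally, for the commutation law $E_{KG}=E_{HG}\circ E_{KH}$ when $G\leq H\leq K$, I would verify the identity on the dense subalgebra $\bbC_\Theta K$: both sides send $u_k$ to $u_k$ if $k\in G$ and to $0$ if $k\notin G$ (here $G\leq H$ is used so that $k\notin H$ forces $k\notin G$), and the identity then propagates to $A(K)$ by continuity. The main obstacle is the contractivity established in the third step: everything there rests on the identification of the compression $e_G\pi(\cdot)e_G$ with $\pi\circ E_{HG}$ together with the isometricity of $\pi$ on $A(H)$ and, after restriction to $\mathcal H_G$, on $A(G)$ — both of which trace back to the faithfulness of $\tau$ (Lemma~\ref{L.tracial}) and the orthonormal basis of Lemma~\ref{L.ell2}.
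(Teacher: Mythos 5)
Your proof is correct, and its overall skeleton matches the paper's: both arguments pass to the GNS representation of the faithful trace $\tau$ of Lemma~\ref{L.tracial} on $\ell_2(\Gamma)$ (Lemma~\ref{L.ell2}), compress by the projection onto $\ell_2(G)$, identify the compressed representation of $A(G)$ with the GNS representation of $\tau\rs A(G)$ via cyclicity of $\delta_e$ and uniqueness of GNS, and conclude with Tomiyama's theorem. The genuine difference lies in the one delicate step, namely why the compression of an arbitrary element of $A(H)$ lands in (a faithful copy of) $A(G)$. The paper first treats the case of finite $G$, where $p_G$ has finite rank and $C_{\Theta\rs G}G=A(G)$ is finite-dimensional, and then handles general $G\leq H$ by writing $H$ as an inductive limit of finite subgroups $H(\lambda)$, setting $G(\lambda)=G\cap H(\lambda)$, and defining $E_{HG}(a)$ as the limit of the Cauchy net $\bigl(E_{HG(\lambda)}(a)\bigr)_\lambda$, which requires the contractivity and commutation of the whole system of maps. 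You bypass this case distinction and limit argument: you define the coefficient-truncation map on the dense $^*$-subalgebra $\bbC_\Theta H$ with values in the \emph{abstract} algebra $A(G)$ (rather than in its image under the compression), prove contractivity there by comparing norms through the two isometric representations, and then extend by continuity into the complete target $A(G)$. Since the target is closed by fiat, no finiteness of $G$ and no inductive-limit structure is needed at this point; local finiteness of $\Gamma$ enters your argument only through the faithfulness of $\tau$. This is arguably the cleaner route to the same map, and it is precisely the kind of argument the paper's two-stage construction was designed to substitute for. Two minor points you should make explicit: the truncation formula is well defined on $\bbC_\Theta H$ because the canonical unitaries $u_h$ are linearly independent (Lemma~\ref{L.ell2}, or Lemma~\ref{L.finite}), and the range of the extended map is exactly $A(G)$ (it is contained in $A(G)$ and fixes it pointwise), which is what Tomiyama's theorem requires of a norm-one idempotent onto a \cstar-subalgebra.
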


\begin{proof} By Lemma~\ref{L.tracial}, $A=\CCR_{(X,\Gamma,\Theta)}$  has a faithful tracial state $\tau$.  We will use the notation $\ell_2(G)$ and $\langle \cdot, \cdot\rangle_\tau$ introduced in the proof of Lemma~\ref{L.ell2}.   The GNS space associated to $\tau$ is isomorphic to $\ell_2(\Gamma)$ and we will identify $A$ with its image $\pi_\tau[A]$ under the GNS representation. For $G\leq \Gamma$ let $\tau_G=\tau\rs G$. Writing $p_G$ for the projection from $\ell_2(\Gamma)$ onto $\ell_2(G)$, the representation 
\begin{equation}\label{Eq.piG}
\pi_G(a)= p_G \pi_\tau(a) p_G
\end{equation}
of $A(G)$ is equivalent to the GNS representation $\pi_{\tau_G}$ of $A(G)$ associated with~$\tau_G$. To see this, note that for $a\in C_{\Theta\rs G}G$ we have  $\tau_G(a)=\langle a\delta_e , \delta_e\rangle_\tau$ and that $\delta_e$ is a cyclic vector for $\pi_G[A(G)]$. Since $C_{\Theta\rs G} G$ is dense in $A(G)$, by the uniqueness of the GNS representation, $\pi_{\tau_G}$ is equivalent to $\pi_G$. 

For $G\leq  \Gamma$ let $\Xi_{G}\colon \pi_{G}[A(G)]\to  A(G)$ be the $^*$-isomorphism  such that 
\begin{equation}\label{Eq.Xi} 
p_G \Xi_{G}(a) p_G = a  
\end{equation}
  for all $a\in \pi_{G}[A(G)]$. This $^*$-isomorphism will be needed after we verify the properties of the maps $E'_{HG}$ defined in the following paragraph. 
	
	Fix $G\leq H\leq \Gamma$ and define  $E'_{HG}\colon A(H)\to \cB(\ell_2(\Gamma))$ by 
	\[
	E'_{HG}(a)= p_G a p_G. 
	\]
	This map is clearly linear and  $E'_{HG}(a)=\pi_G(a)$ for $a\in H_G$ and the system of maps $E'_{HG}$ is clearly commuting.

	We will now verify that     $\|E'_{HG}\|=1$. By the density of $C_{\Theta\rs H}H$ in $A(H)$ it suffices to check  (writing $E'=E'_{HG}$) $\|E'(a)\|\leq \|a\|$ for $a\in C_{\Theta\rs H} H$. Fix such $a$ and $\e>0$. Let $\xi\in \ell_2(G)$ be a unit vector which satisfies  $\|E'(a)\xi\|^2_2>\|E'(a)\|^2-\e$. 
	Since $G$ is a subgroup and $a\in C_{\Theta\rs H}H$, the vectors $(a-E'(a))\xi$ and $\xi$ are orthogonal and 
	\[
	\| a\|^2\geq \|(a-E'(a))\xi\|^2_2+\|E'(a)\xi\|_2^2>\|E'(a)\|^2-\e.
	\]
	 Since $a\in A(H)$ and  $\e>0$ were arbitrary, we have $\|E'\|\leq 1$. The converse inequality follows from  $E'(1)=1$.

	We prove that $E'_{HG}[A(H)]\subseteq \pi_G[A(G)]$. 
For $g\in G$ we have (with $\pi_G$ as in \eqref{Eq.piG}) $p_G u_g p_G=\pi_G(u_G)$ and by linearity $E'_{HG}(a)= \pi_G(a)$ for all $a\in C_{\Theta\rs G}G$. For $h\in H\setminus G$ we have $u_h \delta_g \perp \ell_2(G)$ for all $g\in G$ and therefore $E'_{HG}(u_h)=0$. 	If $G$ is a finite group, then $p_G$ has  finite rank and $C_{\Theta\rs G}G=A(G)$, hence these computations show that  $E'_{HG}(a)\in \pi_G[A(G)]$ for all $a\in A(H)$ and that $E'_{HG}(a)=\pi_G(a)$ for $a\in A(G)$. 
	In this case, let   (see \eqref{Eq.Xi})
	\[
	E_{HG}=\Xi_{G}\circ E'_{EG}. 
	\] 
By the already verified $\|E_{HG}\|=1$ and Tomiyama's theorem, in the case when $G$ is finite,  $E_{HG}$ is a conditional expectation.    

Now consider the case when  $G$ is not necessarily finite. Since $\Gamma$ is locally finite, so is $H$ and we can write it as an inductive limit of finite groups $H(\lambda)$, for $\lambda\in \Lambda$. With $G(\lambda)=G\cap H(\lambda)$, we have that $E_{H G(\lambda)}[A(H)]\subseteq A(G(\lambda))$ for every $\lambda$.  For $a\in A(H)$, the net $(E_{HH(\lambda)}(a))_\lambda$ is  Cauchy since it  converges to~$a$. By the contractivity and commutation,  $(E_{H G(\lambda)}(a))_\lambda$ is a Cauchy net. Define  
\[
E_{HG}(a)=\lim_\lambda E_{H G(\lambda)}(a)
\]
 Therefore $E_{HG}(a)\in \lim_\lambda A(G(\lambda))=A(G)$. Since $a\in A(H)$ was arbitrary, this  proves  $E_{HG}[A(H)]\subseteq A(G)$. It is clear that $E_{HG}(a)=a$ for $a\in A(G)$. Again, Tomiyama's theorem implies that $E_{HG}$ is a conditional expectation as required. 

	Clearly, the system of the maps $E_{HG}$ is commuting, and this completes the proof. 
\end{proof}

\begin{corollary}\label{C.Conditional} Suppose that $\Gamma$ is a locally finite group,   $(X,\Gamma,\Theta)$ is a CCR triple, and $G\leq \Gamma$. If $a\in \CCR_{(X,\Gamma,\Theta)}$ is such that the $\ell_2$-expansion $a=\sum_g \lambda_g  u_g$ (as in Lemma~\ref{L.ell2}) satisfies $\lambda_h=0$ for all  $h\in \Gamma\setminus G$,  then $a\in \cst(u_g\vert g\in G)$.  
\end{corollary}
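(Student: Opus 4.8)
The plan is to realize $a$ as equal to its image under the conditional expectation onto $A(G)$ and then invoke faithfulness of the trace. Concretely, I would apply Proposition~\ref{P.Conditional} with $H=\Gamma$ to obtain the conditional expectation $E=E_{\Gamma G}\colon A\to A(G)$, where $A=\CCR_{(X,\Gamma,\Theta)}$. Since $\Gamma$ is locally finite, Lemma~\ref{L.tracial} guarantees that $\tau$ is faithful; hence it suffices to prove $a=E(a)$, and for that it is enough to show $\|a-E(a)\|_{2,\tau}=0$, since $\|b\|_{2,\tau}^2=\tau(b^*b)$ and faithfulness then forces $b=0$. By Lemma~\ref{L.ell2} the unitaries $u_g$ form an orthonormal basis of the GNS pre-Hilbert space, so $\|b\|_{2,\tau}^2$ is the sum of the squared moduli of the $\ell_2$-coefficients $\langle b\delta_e,\delta_g\rangle$. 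Thus the whole problem reduces to checking that $a$ and $E(a)$ have the same $\ell_2$-expansion.

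I would verify this coefficient by coefficient, splitting into the cases $g\in G$ and $h\in\Gamma\setminus G$. For $h\notin G$ the coefficient of $a$ vanishes by hypothesis, and the coefficient of $E(a)$ vanishes because $E(a)\in A(G)$, every element $c$ of which has $\ell_2$-expansion supported on $G$. This last fact follows since such $c$ is an operator-norm limit of finite combinations $\sum_{g\in G}\mu_g u_g$, so $c\delta_e$ is a $\|\cdot\|_{2,\tau}$-limit of vectors in $\ell_2(G)$ (using $\|\cdot\|_{2,\tau}\leq\|\cdot\|$); hence $c\delta_e$ lies in the closed subspace $\ell_2(G)$ and is orthogonal to every $\delta_h$ with $h\notin G$.

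For $g\in G$ the two coefficients coincide, and the cleanest way to see this is through the GNS/compression description of $E$ recorded in the proof of Proposition~\ref{P.Conditional}, namely $p_G E(a) p_G=p_G\pi_\tau(a) p_G$ as operators on $\ell_2(G)$. Evaluating at $\delta_e\in\ell_2(G)$ and pairing with $\delta_g$ for $g\in G$ gives $\langle E(a)\delta_e,\delta_g\rangle=\langle a\delta_e,\delta_g\rangle$, which is exactly the assertion that the $g$-coefficient of $E(a)$ equals that of $a$. Alternatively one may argue purely algebraically: $E$ is trace preserving, i.e.\ $\tau\circ E=\tau$ (again read off from the compression at $\delta_e$), and using $u_g^*=u_{g^{-1}}\in A(G)$ together with the module property $E(u_g^* a)=u_g^* E(a)$ one computes $\langle E(a)\delta_e,\delta_g\rangle=\tau(u_g^* E(a))=\tau(u_g^* a)=\langle a\delta_e,\delta_g\rangle$.

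Combining the two cases, $a-E(a)$ has every $\ell_2$-coefficient equal to zero, so $\|a-E(a)\|_{2,\tau}=0$, and faithfulness of $\tau$ yields $a=E(a)\in A(G)=\cst(u_g\mid g\in G)$. The only point requiring genuine care—and the main obstacle—is the matching of the $G$-indexed coefficients, which is precisely where the concrete realization of $E_{\Gamma G}$ via compression by $p_G$ (rather than merely its abstract existence as a conditional expectation) is indispensable; once that description is in hand the remaining steps are routine.
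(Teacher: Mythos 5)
Your proposal is correct and takes essentially the same route as the paper: the paper's entire proof is the one-line observation that $E_{\Gamma G}(a)=a$, and your argument simply supplies the details of that identity (coefficient matching in the GNS space plus faithfulness of $\tau$ from Lemma~\ref{L.tracial}). One small point worth noting is that the compression description $p_G E(a) p_G = p_G a p_G$ is recorded in the proof of Proposition~\ref{P.Conditional} only when $G$ is finite; for infinite $G$ one passes through the defining limit $E_{\Gamma G}(a)=\lim_\lambda E_{\Gamma G(\lambda)}(a)$ over finite subgroups, a routine step that your coefficient-wise argument absorbs without change.
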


\begin{proof} For $a$ as in the assumptions we clearly have $E_{\Gamma G}(a)=a$. 
\end{proof}

\section{Properties of CCR algebras $\CCR_{(X,\Gamma,\Theta)}$}
\label{S.Properties}
In this section we analyze the structure of \cstar-algebras of the form $\CCR_{(X,\Gamma,\Theta)}$ introduced in \S\ref{S.CCR}.  In the following lemma and elsewhere in this paper, $\otimes$ stands for the minimal tensor product. 

 \begin{lemma}\label{L.tensor}
 Suppose that $(Z,\Gamma,\Theta)$ is a CCR triple, that  $\Gamma$ is locally finite, that  $Z=X\sqcup Y$ for some nonempty sets $X$ and $Y$, and that $\Theta(\xi,\eta)=1$ if  $\xi\in X$ and $\eta\in Y$. Then (using the notation of \eqref{Eq.GammaF})
 \[
 \CCR_{(Z,\Gamma,\Theta)}\cong \CCR_{(X,\Gamma_X, \Theta\rs\Gamma_X)}\otimes 
\CCR_{(Y,\Gamma_Y,\Theta\rs\Gamma_Y)}.
\]  
 \end{lemma}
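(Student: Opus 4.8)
The plan is to build the isomorphism from the functorial and tracial machinery already in place, using traces to force injectivity. Write $A_Z=\CCR_{(Z,\Gamma,\Theta)}$, $A_X=\CCR_{(X,\Gamma_X,\Theta\rs\Gamma_X)}$ and $A_Y=\CCR_{(Y,\Gamma_Y,\Theta\rs\Gamma_Y)}$. Since $Z=X\sqcup Y$ we have the internal direct sum $\Gamma=\Gamma_X\oplus\Gamma_Y$, so every $g\in\Gamma$ factors uniquely as $g=g_Xg_Y$ with $g_X\in\Gamma_X$ and $g_Y\in\Gamma_Y$. As $\Gamma$ is locally finite, so are the subgroups $\Gamma_X$ and $\Gamma_Y$, and hence all three algebras are AF by Lemma~\ref{L.AF}, in particular nuclear; thus the minimal and maximal tensor products of $A_X$ and $A_Y$ coincide.

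First I would produce the map. The inclusions $\Gamma_X\hookrightarrow\Gamma$ and $\Gamma_Y\hookrightarrow\Gamma$ are morphisms of CCR triples (they are injective and preserve $\Theta$ by definition of the restrictions), so by Proposition~\ref{P.funct} they induce injective $^*$-homomorphisms $\iota_X\colon A_X\to A_Z$ and $\iota_Y\colon A_Y\to A_Z$ with $\iota_X(u_{g_X})=u_{g_X}$ and $\iota_Y(u_{g_Y})=u_{g_Y}$. The key point is that their ranges commute: for $g_X\in\Gamma_X$ and $g_Y\in\Gamma_Y$ the extended bicharacter of Lemma~\ref{L.CCR.1} gives $\Theta(g_X,g_Y)=\prod_{\xi\in X,\,\eta\in Y}\Theta(\xi,\eta)^{m(\xi)n(\eta)}=1$ by the hypothesis $\Theta(\xi,\eta)=1$ for $\xi\in X$, $\eta\in Y$. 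Consequently $u_{g_X}u_{g_Y}=u_{g_Y}u_{g_X}$, and since the $X$-factors and $Y$-factors of $u_g$ may be freely reordered, this common value equals $u_{g_Xg_Y}=u_g$. By the universal property of the maximal tensor product there is a $^*$-homomorphism $\Phi\colon A_X\otimes A_Y\to A_Z$ with $\Phi(a\otimes b)=\iota_X(a)\iota_Y(b)$; in particular $\Phi(u_{g_X}\otimes u_{g_Y})=u_g$.

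Surjectivity is then immediate, as the range of $\Phi$ is a \cstar-subalgebra containing every $u_g$, $g\in\Gamma$, and these span a dense subalgebra of $A_Z$. The main work is injectivity, and for this I would use traces. Let $\tau_X,\tau_Y,\tau_Z$ be the faithful tracial states of Lemma~\ref{L.tracial} (faithful because the groups are locally finite), and let $\tau_X\otimes\tau_Y$ be the product trace on $A_X\otimes A_Y$. Comparing the two tracial states $\tau_Z\circ\Phi$ and $\tau_X\otimes\tau_Y$ on the elementary tensors $u_{g_X}\otimes u_{g_Y}$, both equal $1$ when $g_X=g_Y=e$ and $0$ otherwise: this uses $\tau_Z(u_g)=0$ for $g\neq e$ together with the fact that $g_Xg_Y=e$ forces $g_X=g_Y=e$ in the direct sum. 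Hence $\tau_Z\circ\Phi=\tau_X\otimes\tau_Y$ by linearity and density. Since $\tau_X$ and $\tau_Y$ are faithful and the tensor product is minimal (hence spatial, by nuclearity), $\tau_X\otimes\tau_Y$ is a faithful trace; therefore $\Phi(a)=0$ implies $(\tau_X\otimes\tau_Y)(a^*a)=\tau_Z(\Phi(a)^*\Phi(a))=0$, whence $a=0$. Thus $\Phi$ is a $^*$-isomorphism.

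I expect the one genuinely non-formal ingredient to be the \emph{faithfulness of the product trace} $\tau_X\otimes\tau_Y$ on the minimal tensor product; this is precisely where the nuclearity (AF-ness) of the factors is used, to guarantee that the minimal tensor product is the spatial one, on which a product of faithful states is faithful. Everything else is bookkeeping resting on Proposition~\ref{P.funct}, the extended bicharacter of Lemma~\ref{L.CCR.1}, and the trace of Lemma~\ref{L.tracial}.
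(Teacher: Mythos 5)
Your proof is correct, and its skeleton is the same as the paper's: commuting embedded copies of the two factors obtained from functoriality (Proposition~\ref{P.funct}), the universal property to produce a $^*$-homomorphism $\Phi$ from the tensor product onto $\CCR_{(Z,\Gamma,\Theta)}$, and nuclearity of AF algebras (Lemma~\ref{L.AF}) to resolve the tensor-norm ambiguity. Where you diverge is in certifying that $\Phi$ is an isomorphism. The paper's proof is a two-line abstract appeal: the universal property and functoriality show that $\CCR_{(Z,\Gamma,\Theta)}$ is the completion of the algebraic tensor product of $\CCR_{(X,\Gamma_X,\Theta\rs\Gamma_X)}$ and $\CCR_{(Y,\Gamma_Y,\Theta\rs\Gamma_Y)}$ with respect to \emph{some} \cstar-norm (the embedding of the algebraic tensor product being implicit, via linear independence of the $u_g$, $g\in\Gamma$), and since every \cstar-norm lies between the minimal and maximal ones, nuclearity forces it to equal the minimal norm. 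You instead prove injectivity of the explicit surjection by verifying $\tau_Z\circ\Phi=\tau_X\otimes\tau_Y$ on the dense span of the $u_{g_X}\otimes u_{g_Y}$ and invoking faithfulness of the product trace; this is more self-contained where the paper is terse, at the cost of needing that faithfulness, which you rightly flag as the one non-formal ingredient. Two remarks on that ingredient: your attribution of it to nuclearity is slightly off, since the minimal tensor product is spatial by definition and the faithfulness of a product of faithful traces on it has nothing to do with nuclearity (nuclearity is used only to transfer the universal-property map from the maximal to the minimal tensor product); and in the present setting you can prove the faithfulness claim with tools already in the paper, namely $\CCR_{(X,\Gamma_X,\Theta\rs\Gamma_X)}\otimes\CCR_{(Y,\Gamma_Y,\Theta\rs\Gamma_Y)}$ is the inductive limit of the finite-dimensional algebras $A_F\otimes A_G$ for $F\Subset X$, $G\Subset Y$, the product trace is faithful on each of these, so its trace-kernel ideal meets each of them trivially and is therefore trivial by \cite[Proposition~2.5.3]{Fa:STCstar} --- exactly the argument closing the proof of Lemma~\ref{L.tracial}.
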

\begin{proof} The universal property and functoriality together imply that  $\CCR_{(Z,\Gamma,\Theta)}\cong
\CCR_{(X,\Gamma_X, \Theta\rs\Gamma_X)}\otimes_\alpha 
\CCR_{(Y,\Gamma_Y,\Theta\rs\Gamma_Y)}$ for some tensor product $\otimes_\alpha$. Lemma~\ref{L.AF} implies that these \cstar-algebras are AF, and therefore nuclear, hence $\otimes_\alpha$ is the minimal tensor product (see e.g., \cite{Black:Operator}). 
\end{proof}

In the following we follow von Neumann's convention and identify $2$ with $\{0,1\}$. 

\begin{lemma} \label{L.UHF} Suppose that $(\kappa\times 2,\Gamma,\Theta)$ is a CCR triple such that $\Gamma=\bigoplus_{\kappa\times 2} \bbZ/n\bbZ$ and  $\Theta$ satisfies (with $\lambda=\exp(2\pi i/n)$)
\[
\Theta(g(\alpha,i), g(\beta, j)) =\begin{cases}
\lambda, \text{ if $\alpha=\beta$, $i=0$, and $j=1$}, \\
1, \text{ otherwise.}
\end{cases}
\]
	Then $\CCR_{(\kappa,\Gamma,\Theta)}\cong \bigotimes_{\kappa} M_n(\bbC)$.  
	
	Moreover, if $\kappa$ is finite then every \cstar-algebra generated by unitaries $v_g$, $g\in \cG(\Gamma)$ that satisfy  relations $\cR(\Gamma,\Theta)$ is isomorphic to $\bigotimes_\kappa M_p(\bbC)$.\footnote{The conclusion holds without the finiteness assumption  on $\kappa$, but we will not need this fact.}  
\end{lemma}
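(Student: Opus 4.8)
The plan is to realize $\CCR_{(\kappa\times 2,\Gamma,\Theta)}$ as the infinite tensor product of its ``blocks'' indexed by $\alpha\in\kappa$. For $\alpha\in\kappa$ write $Z_\alpha=\{\alpha\}\times 2$ and $\Gamma_\alpha=\Gamma_{Z_\alpha}=C_{(\alpha,0)}\oplus C_{(\alpha,1)}\cong(\bbZ/n\bbZ)^2$, and set $A_\alpha=\CCR_{(Z_\alpha,\Gamma_\alpha,\Theta\rs\Gamma_\alpha)}$. First I would check that each block is a matrix algebra: by Proposition~\ref{P.existence} the generators $u_{(\alpha,0)},u_{(\alpha,1)}$ of $A_\alpha$ are unitaries of order $n$ satisfying $u_{(\alpha,0)}u_{(\alpha,1)}=\lambda u_{(\alpha,1)}u_{(\alpha,0)}$ with $\lambda$ a primitive $n$th root of unity, so by the first part of Lemma~\ref{L.lambda} these relations admit a representation by generators of $M_n(\bbC)$. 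The universal property then supplies a surjective $^*$-homomorphism $A_\alpha\to M_n(\bbC)$, and since Lemma~\ref{L.finite}(\ref{1.L.finite}) gives $\dim A_\alpha=|\Gamma_\alpha|=n^2=\dim M_n(\bbC)$, this surjection is an isomorphism $A_\alpha\cong M_n(\bbC)$.

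Next I would handle a finite $F\Subset\kappa$. Because $\Theta(\xi,\eta)=1$ whenever $\xi\in Z_\alpha$ and $\eta\in Z_\beta$ with $\alpha\neq\beta$, the splitting hypothesis of Lemma~\ref{L.tensor} holds for peeling off one block at a time, and a finite induction gives $A_{F\times 2}:=\CCR_{(F\times 2,\Gamma_{F\times 2},\Theta\rs\Gamma_{F\times 2})}\cong\bigotimes_{\alpha\in F}A_\alpha\cong\bigotimes_{\alpha\in F}M_n(\bbC)$. The important bookkeeping is that the isomorphism of Lemma~\ref{L.tensor} sends $u_g\otimes u_h$ to $u_{gh}$, so for $F\subseteq F'$ the functorial inclusion $A_{F\times 2}\hookrightarrow A_{F'\times 2}$ furnished by Proposition~\ref{P.funct} (applied to $\Gamma_{F\times 2}\hookrightarrow\Gamma_{F'\times 2}$) corresponds, under these isomorphisms, to the standard unital embedding $\bigotimes_{\alpha\in F}M_n(\bbC)\hookrightarrow\bigotimes_{\alpha\in F'}M_n(\bbC)$ given by $a\mapsto a\otimes 1$.

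Then I would pass to the limit. By Lemma~\ref{L.finite}(\ref{3.L.finite}), $\CCR_{(\kappa\times 2,\Gamma,\Theta)}$ is the inductive limit of the $A_{F'}$ over finite $F'\Subset\kappa\times 2$; since the subsets of the form $F\times 2$ with $F\Subset\kappa$ are cofinal, the limit may be computed along them. Combined with the previous paragraph this yields
\[
\CCR_{(\kappa\times 2,\Gamma,\Theta)}=\varinjlim_{F\Subset\kappa}A_{F\times 2}\cong\varinjlim_{F\Subset\kappa}\bigotimes_{\alpha\in F}M_n(\bbC)=\bigotimes_\kappa M_n(\bbC),
\]
where the final identification is exactly the definition of the infinite minimal tensor product, legitimate precisely because the connecting maps are the standard ones.

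For the moreover clause, suppose $\kappa$ is finite and $B=\cst(v_g:g\in\cG(\Gamma))$, where the $v_g$ satisfy $\cR(\Gamma,\Theta)$. The universal property gives a surjective $^*$-homomorphism $\CCR_{(\kappa\times 2,\Gamma,\Theta)}\to B$ sending the canonical generators to the $v_g$, and this map is nonzero since $B$ is a nontrivial unital algebra. By the first part of the lemma $\CCR_{(\kappa\times 2,\Gamma,\Theta)}\cong\bigotimes_\kappa M_n(\bbC)\cong M_{n^\kappa}(\bbC)$ is simple, so the kernel of this surjection vanishes and $B\cong\bigotimes_\kappa M_n(\bbC)$. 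I expect the main obstacle to lie not in any single isomorphism but in the compatibility step of the second paragraph: one must verify that the functorial inclusions between the finite blocks agree with the standard tensor-product embeddings, since otherwise the inductive limit, while still AF, need not be the infinite tensor product $\bigotimes_\kappa M_n(\bbC)$. Everything else is either a dimension count or a direct appeal to simplicity.
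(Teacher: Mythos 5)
Your proof is correct and follows essentially the same route as the paper's: identify each block $\cst(u_{(\alpha,0)},u_{(\alpha,1)})$ with $M_n(\bbC)$ via Lemma~\ref{L.lambda} and the dimension count of Lemma~\ref{L.finite}, combine finitely many commuting blocks with Lemma~\ref{L.tensor}, and pass to the inductive limit along the cofinal family $F\times 2$ using Lemma~\ref{L.AF} (your explicit verification that the connecting maps match the standard embeddings $a\mapsto a\otimes 1$ is a point the paper leaves implicit, and is worth making). The only divergence is in the moreover clause, where you use simplicity of $\bigotimes_\kappa M_n(\bbC)\cong M_{n^\kappa}(\bbC)$ to conclude that the canonical surjection from the universal algebra is injective, whereas the paper argues via a spanning/dimension count together with universality; both are valid, and yours is the cleaner formulation.
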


\begin{proof}  If $F\Subset \kappa$, then $\Gamma_F=\bigoplus_{F\times 2} \bbZ/n\bbZ$ is a subgroup of $\Gamma$, and the restriction of $\Theta$ to $\Gamma_F$ entails that $\CCR_{(F,\Gamma_F,\Theta\rs \Gamma_F)}$ is generated by the  unitaries $v_\alpha=u_{\alpha,0}$ and $w_\alpha=u_{\alpha,1}$ such that $\cst(v_\alpha,w_\alpha)$ generate a copy of $M_n(\bbC)$, and that these copies commute. Since $\Gamma$ is locally finite, by Lemma~\ref{L.tensor} $\CCR(\Gamma_F,\bb\rs \Gamma_F)\cong \bigotimes_F M_n(\bbC)$.  Since $\Gamma$ is the direct limit of $\Gamma_F$, for $F\Subset \kappa$, by Lemma~\ref{L.AF} $\CCR_{(\kappa,\Gamma,\Theta)}$ is isomorphic to  $\bigotimes_\kappa M_n(\bbC)$ as required. 

We now prove that for a finite $\kappa$, every \cstar-algebra $A$ generated by unitaries $v_g$, $g\in \cG(\Gamma)$ that satisfy the relations $\cR(\Gamma,\Theta)$ is isomorphic to $\bigotimes_\kappa M_p(\bbC)$. The relations imply that $A$ is the closed linear span of $1$ and finite products of generators taken in the lexicographic order such that each $g(\alpha,j)$ occurs fewer than $p$ times. Thus, if $\kappa$ is finite, the dimension of $A$ is $p^{2\kappa}$---i.e., equal to the dimension of $\bigotimes_\kappa M_p(\bbC)$. By the universality of the latter, $A$ is isomorphic to it. 
\end{proof} 

\begin{lemma} 
Suppose  $(\kappa,\Gamma,\Theta)$ is a CCR triple. For every  $X\subseteq \Gamma$,\footnote{In this definition we are using the extension of $\Theta$ to $\Gamma$ defined in Lemma~\ref{L.CCR.1}.}   
\[
Z_{\Gamma,\Theta}(X)=\{g\in \Gamma\vert \Theta(x,g)=1\text{ for all }x\in X\}
\]
is a subgroup of $\Gamma$. 
\end{lemma}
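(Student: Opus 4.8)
The plan is to exhibit $Z_{\Gamma,\Theta}(X)$ as an intersection of kernels of characters on $\Gamma$, so that the subgroup conclusion is immediate. The key input is that the extension of $\Theta$ to $\Gamma$ provided by Lemma~\ref{L.CCR.1} is a \emph{bicharacter}: its defining formula is visibly multiplicative in each of its two arguments, since in the product $\prod_{\xi,\eta}\Theta(\xi,\eta)^{m(\xi)n(\eta)}$ the exponents $m(\xi)$ (respectively $n(\eta)$) simply add when one multiplies group elements. Hence for each fixed $x\in\Gamma$ the map $\chi_x\colon\Gamma\to\bbT$ given by $\chi_x(g)=\Theta(x,g)$ is a group homomorphism into the circle group.

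First I would record the three consequences of multiplicativity in the second argument. Taking all exponents $n(\eta)$ equal to $0$ gives $\chi_x(e)=1$; multiplicativity gives $\chi_x(gh)=\chi_x(g)\chi_x(h)$; and since values lie in $\bbT$, the identity $\chi_x(g)\chi_x(g^{-1})=\chi_x(e)=1$ forces $\chi_x(g^{-1})=\chi_x(g)^{-1}=\overline{\chi_x(g)}$. Therefore
\[
\ker\chi_x=\{g\in\Gamma\mid \Theta(x,g)=1\}
\]
is a subgroup of $\Gamma$ for every $x$. I would then simply observe that
\[
Z_{\Gamma,\Theta}(X)=\bigcap_{x\in X}\ker\chi_x,
\]
which, being an intersection of subgroups of $\Gamma$, is itself a subgroup (and if $X=\emptyset$ the intersection is all of $\Gamma$, which is still a subgroup).

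There is essentially no obstacle here; the only point requiring care lies upstream, in the well-definedness of the extended $\Theta$, namely that reducing an exponent $m(\xi)$ modulo the order $f_\Gamma(\xi)$ leaves the value of $\Theta$ unchanged. This is exactly where \ref{B.4} is used: since $\gcd(f_\Gamma(\eta),f_\Gamma(\xi))$ divides $f_\Gamma(\xi)$, one has $\Theta(\eta,\xi)^{f_\Gamma(\xi)}=1$, so the ambiguity in the normal form is invisible to $\Theta$ and the bicharacter identity genuinely holds on $\Gamma$. As this is already subsumed in Lemma~\ref{L.CCR.1}, in the write-up I would merely invoke that the extended $\Theta$ is a bicharacter and give the one-line intersection-of-kernels argument above.
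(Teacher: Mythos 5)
Your proof is correct and takes essentially the same approach as the paper: the paper's argument also rests on the multiplicativity of the extended $\Theta$ in its second argument, verifying directly that $\Theta(x,gh)=\Theta(x,g)\Theta(x,h)=1$ and $\Theta(x,g^{-1})=\overline{\Theta(x,g)}=1$ for all $x\in X$. Your packaging of this as $Z_{\Gamma,\Theta}(X)=\bigcap_{x\in X}\ker\chi_x$, an intersection of kernels of characters $\chi_x(g)=\Theta(x,g)$, is only a cosmetic reformulation of the same computation.
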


\begin{proof} Suppose that $g$ and $h$ belong to $Z_{\Gamma,\Theta}(X)$. For $x\in X$ we have $\Theta(x,gh)=\Theta(x,g)\Theta(x,h)=1$. 

Similarly,  $\Theta(x,g^{-1})=\Theta(x,1)\overline{\Theta(x,g)}=1$. Since $x$ was arbitrary, this implies that $gh$ and $g^{-1}$ belong to $Z_{\Gamma,\Theta}(X)$. Since $g$ and $h$ were arbitrary, this proves that $Z_{\Gamma,\Theta}(X)$ is a subgroup of $\Gamma$. 
\end{proof}

If $A$ is a \cstar-subalgebra of $B$, then the relative commutant of $A$ in $B$ is 
\[
B\cap A'=\{b\in B\vert [a,b]=0\text{ for all }a\in A\}. 
\]

The following definition is taken from  \cite{FaKa:Nonseparable} (see also \cite[\S 7.4]{Fa:STCstar}). 
\begin{definition}
	A \cstar-subalgebra $B$ of a \cstar-algebra $A$ is \emph{complemented} in $A$ if $A=\cst(B,A\cap B')$.  
\end{definition}

The following is an analog of \cite[Lemma~10.2.10]{Fa:STCstar}. 

\begin{lemma} \label{L.commutant}
	Suppose that $(\kappa,\Gamma,\Theta)$ is a CCR triple,  $\Gamma$ is locally finite,  and   $X\subseteq \kappa$ is nonempty. Then the following are true. 
\begin{enumerate}
\item\label{1.commutant} 	$
	\CCR_{(\kappa,\Gamma,\Theta)}\cap \cst(\{u_g\vert g\in \Gamma_X\})'=\cst(\{u_g\vert g\in Z_{\Gamma,\Theta}(X)\})$. 
\item \label{2.commutant} $\cst(\{u_g\vert g\in \Gamma_X\})$ is complemented in $\CCR_{(\kappa,\Gamma,\Theta)}$ if and only if $\Gamma$ is generated by $\Gamma_X$ and $Z_{\Gamma,\Theta}(X)$. 
\end{enumerate}
\end{lemma}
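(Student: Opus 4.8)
The plan is to do all computations inside the GNS Hilbert space $\ell_2(\Gamma)$ whose orthonormal basis $\{u_g \mid g\in\Gamma\}$ is furnished by Lemma~\ref{L.ell2}. I would write each $a\in\CCR_{(\kappa,\Gamma,\Theta)}$ through its $\ell_2$-expansion $a=\sum_g\lambda_g u_g$, noting that the $g$-th coefficient is the $\|\cdot\|_{2,\tau}$-continuous functional $\lambda_g=\langle a,u_g\rangle_\tau=\tau(u_g^*a)$, and recover membership in a subalgebra from the support of this expansion via Corollary~\ref{C.Conditional}. Throughout I use that requiring $\Theta(x,g)=1$ for every $x\in\Gamma_X$ is equivalent to $g\in Z_{\Gamma,\Theta}(X)$, since $\Theta(\cdot,g)$ is a character and $\Gamma_X$ is generated by the $g(\xi)$, $\xi\in X$.

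For \eqref{1.commutant}, the inclusion $\supseteq$ is immediate: if $g\in Z_{\Gamma,\Theta}(X)$ then for every $x\in\Gamma_X$ the relation $u_x u_g=\Theta(x,g)u_g u_x$ together with $\Theta(x,g)=1$ shows $u_g$ commutes with $u_x$, so $u_g$ lies in the relative commutant, and the generated C*-algebra is contained in it. For $\subseteq$, I would fix $a$ in the relative commutant with expansion $a=\sum_g\lambda_g u_g$ and fix $x\in\Gamma_X$. The commutation relation gives $u_x^*u_g^*u_x=\Theta(x,g)u_g^*$, so by the cyclic property of $\tau$ the $g$-th coefficient of $u_x a u_x^*$ is
\[
\tau(u_g^*u_x a u_x^*)=\tau(u_x^*u_g^*u_x a)=\Theta(x,g)\,\tau(u_g^*a)=\Theta(x,g)\lambda_g.
\]
Since $a$ commutes with $u_x$ we have $u_x a u_x^*=a$, and comparing coefficients yields $\Theta(x,g)\lambda_g=\lambda_g$ for every $g$. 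Hence $\lambda_g\neq 0$ forces $\Theta(x,g)=1$ for all $x\in\Gamma_X$, i.e.\ $g\in Z_{\Gamma,\Theta}(X)$; so the expansion of $a$ is supported on $Z_{\Gamma,\Theta}(X)$ and Corollary~\ref{C.Conditional} places $a$ in $\cst(\{u_g\mid g\in Z_{\Gamma,\Theta}(X)\})$.

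For \eqref{2.commutant}, write $B=\cst(\{u_g\mid g\in\Gamma_X\})$. By \eqref{1.commutant} we have $\CCR_{(\kappa,\Gamma,\Theta)}\cap B'=\cst(\{u_g\mid g\in Z_{\Gamma,\Theta}(X)\})$, so $\cst(B,\CCR_{(\kappa,\Gamma,\Theta)}\cap B')$ is the C*-algebra generated by $\{u_g\mid g\in\Gamma_X\cup Z_{\Gamma,\Theta}(X)\}$. Each product $u_g u_h$ is a scalar multiple of $u_{gh}$ (rearrange the defining word for $u_g u_h$ into increasing order using the commutation relations and reduce exponents using $u_\xi^{f_\Gamma(\xi)}=1$), and $u_g^*$ is a scalar multiple of $u_{g^{-1}}$; hence for any subgroup $H\leq\Gamma$ the closed linear span of $\{u_h\mid h\in H\}$ is already a C*-algebra, namely $A(H)$. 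Taking $H=\langle\Gamma_X, Z_{\Gamma,\Theta}(X)\rangle$ gives $\cst(B,\CCR_{(\kappa,\Gamma,\Theta)}\cap B')=A(H)$. If $H=\Gamma$ this is all of $\CCR_{(\kappa,\Gamma,\Theta)}$, so $B$ is complemented. If $H\neq\Gamma$, I would pick $g\in\Gamma\setminus H$; the conditional expectation $E_{\Gamma H}$ of Proposition~\ref{P.Conditional} is the projection onto $\ell_2(H)$ in the GNS picture, so $E_{\Gamma H}(u_g)=0\neq u_g$ while $E_{\Gamma H}$ fixes $A(H)$, whence $u_g\notin A(H)$ and $B$ is not complemented. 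This establishes the equivalence.

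The step I expect to be the main obstacle is the passage from the dense twisted group algebra $\bbC_\Theta\Gamma$ to arbitrary elements of $\CCR_{(\kappa,\Gamma,\Theta)}$ in part \eqref{1.commutant}: one must know that the Fourier-coefficient functionals $a\mapsto\tau(u_g^*a)$ transform correctly under conjugation by $u_x$. This is exactly what the trace identity above delivers, since traciality makes conjugation by a unitary a $\|\cdot\|_{2,\tau}$-isometry and the cyclic property converts the conjugated coefficient into $\Theta(x,g)\lambda_g$ directly, so no separate continuity argument is needed beyond Lemma~\ref{L.ell2}. The only other point requiring care is the elementary bookkeeping that products and adjoints of the $u_g$ remain within the span of $\{u_{g'}\mid g'\in H\}$ for a subgroup $H$, which underlies the identification of the generated C*-algebra with $A(H)$ in part \eqref{2.commutant}.
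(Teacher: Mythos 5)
Your proof is correct and follows essentially the same route as the paper: for part \eqref{1.commutant} both arguments expand $a$ in the orthonormal basis of Lemma~\ref{L.ell2}, conjugate by a unitary $u_x$ with $x\in\Gamma_X$ to conclude that any coefficient $\lambda_g$ with $\Theta(x,g)\neq 1$ must vanish, and then invoke Corollary~\ref{C.Conditional}. Your additional detail for part \eqref{2.commutant} (identifying $\cst(B,\, A\cap B')$ with $A(\langle\Gamma_X, Z_{\Gamma,\Theta}(X)\rangle)$ and using the conditional expectation from Proposition~\ref{P.Conditional} to see that $A(H)\neq A$ when $H\neq\Gamma$) merely fills in what the paper declares to follow immediately from part \eqref{1.commutant}.
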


\begin{proof} \eqref{1.commutant} Only the direct inclusion requires a proof. 

Let $B=\cst(\{u_g\vert g\in \Gamma_X\})$. 
Fix $a\in \CCR_{(\kappa,\Gamma,\Theta)}\cap B'$; we will prove that $a\in \cst(\{u_g\vert g\in Z_{\Gamma,\Theta}(X)\})$. By Lemma~\ref{L.ell2}, we can represent $a$ as a possibly infinite $\|\cdot\|_{2,\tau}$-convergent sum, $a=\sum_j \lambda_j u_{g(j)}$.

We will prove that  $u_{g(j)}\in Z_{\Gamma,\Theta}(\Lambda)$ for all $j$ such that $\lambda_j\neq 0$. Towards contradiction, assume that there is $j$ such that $\lambda_j\neq 0$ but $g(j)\notin Z_{\Gamma,\Theta}(\Lambda)$. Fix $h\in \Gamma_X$ such that $\eta=\Theta(h,g(j))$ is not equal to 1. Then $u_h u_{g(j)} u_h^*=\eta u_{g(j)}$, and therefore in the $\|\cdot\|_{2,\tau}$-expansion of  $u_h a u_h^*$, the $g(j)$-coefficient is equal to $\eta$. Therefore the $g(j)$-coefficient in the expansion of $a-u_h a u_h^*$ is $1-\eta\neq 0$, contradiction. 
	
	By Corollary~\ref{C.Conditional}, this implies that  $a\in \cst(u_g\vert g\in  Z_{\Gamma,\Theta}(\Lambda))$, as required. Since~$a$ was arbitrary, this proves the direct inclusion and completes the proof. 

\eqref{2.commutant} follows immediately from \eqref{1.commutant}. 
\end{proof}

The assumption that $\Gamma$ be locally finite used throughout \S\ref{S.Funct} and \S\ref{S.Properties} is probably unnecessary. 

\section{Non-uniqueness}

Our main objective in this section  is to prove Theorem~\ref{T.non-uniqueness} (a stronger result will be proven in \S\ref{S.non-classification}). 
The following is  \cite[Proposition~3.2]{FaKa:NonseparableII}. 

\begin{lemma}\label{L.K-theory} 
If   $A$ is a nonseparable AM algebra, then the following conditions hold. 
\begin{enumerate}
\item $K_0(A)=K_0(B)$ for every separable elementary submodel $B$ of $A$ (in symbols,  $B\prec A$). 
\item $A$ has a unique tracial state. 
\item $K_1(A)$ is trivial. \qed 
\end{enumerate}
\end{lemma}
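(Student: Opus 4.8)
The plan is to treat the three items separately, establishing (2) first since it feeds into (1). For (2), recall that $A$ is by hypothesis an inductive limit of full matrix algebras, so the images of these matrix subalgebras have dense union in $A$. Any tracial state on $A$ restricts to a tracial state on each full matrix subalgebra, and the only such state is the normalized trace; since the connecting maps are unital embeddings they preserve normalized traces, so these restrictions are compatible and pin down the trace on a dense subalgebra. Hence there is exactly one tracial state $\tau$, and it is the continuous extension of the compatible system of normalized traces, proving (2).

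For (1), I would first identify $K_0(A)$ concretely. Using the unique faithful trace $\tau$ from (2), the induced homomorphism $K_0(A)\to\bbR$, $[p]-[q]\mapsto \tau(p)-\tau(q)$, is injective (the dimension group of an AM algebra is torsion-free of rank one) and has image the subgroup of $\bbQ$ generated by $\{1/k : M_k(\bbC)\text{ embeds unitally in }A\}$; in other words $K_0(A)=\bbZ[1/n_A]$ for the associated supernatural number. The key observation is then that unital embeddability of $M_k(\bbC)$ is an elementary property: it is equivalent to the existence of a system of $k^2$ matrix units, a condition of the form $\inf_{\bar x}\varphi(\bar x)=0$ for a quantifier-free $\varphi$ measuring the failure of the matrix-unit relations, and by stability of those relations an approximate solution yields an exact copy of $M_k(\bbC)$. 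Since $B\prec A$ preserves the values of sentences, $M_k(\bbC)$ embeds unitally in $B$ iff it does in $A$. This gives surjectivity of the inclusion-induced map $K_0(B)\to K_0(A)$, because each generator $1/k$ is realized by a projection already living in $B$. For injectivity, if projections $p,q$ in some $M_m(B)$ become Murray--von Neumann equivalent in $M_m(A)$, the equivalence is witnessed by a partial isometry $v\in M_m(A)$ with $v^*v=p$ and $vv^*=q$; since $M_m(B)\prec M_m(A)$ and the partial-isometry relations are stable, an approximate witness, and hence an exact one, can be found in $M_m(B)$, so $[p]=[q]$ already in $K_0(B)$. Thus the inclusion induces an isomorphism $K_0(B)\cong K_0(A)$.

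For (3), continuity of $K_1$ along inductive limits gives $K_1(A)=\varinjlim K_1(M_{k_i}(\bbC))=0$, since $K_1$ of a full matrix algebra vanishes. More self-containedly, every unitary in a matrix amplification $M_m(A)$ is approximated by a unitary lying in $M_m$ of one of the finite-dimensional building blocks, i.e.\ in a full matrix algebra, whose unitary group is connected; so every unitary is homotopic to the identity and $K_1(A)=0$.

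The hard part will be item (1), and within it the transfer of unital $M_k(\bbC)$-embeddability and of Murray--von Neumann equivalences across $B\prec A$. Both reduce to the \emph{stability} of the matrix-unit and partial-isometry relations — the self-improvement of approximate solutions to exact ones — together with the expressibility of these conditions as the value of a first-order sentence and the fact that matrix amplification is an elementary operation; these are standard facts about the logic of C*-algebras but must be invoked with care. Items (2) and (3) are comparatively routine consequences of the inductive-limit structure.
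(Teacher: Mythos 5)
The paper gives no internal proof of this lemma: it is quoted from \cite[Proposition~3.2]{FaKa:NonseparableII}, with the qed symbol attached to the statement itself. So there is nothing to compare line by line; judged on its own, your argument is essentially correct and runs along the lines one would expect of the cited source. Part (2) is exactly right (any trace restricts to the unique normalized trace on each full matrix building block, whose union is dense), and part (3) is right, with the caveat that you need continuity of $K_1$ under \emph{net-indexed} (not just sequential) inductive limits, or the self-contained connectedness argument you give, which works verbatim for nets. For part (1), the two nonroutine facts you lean on are genuinely available and correctly invoked: weak stability of the unital matrix-unit relations and of the relation $v^*v=p$, $vv^*=q$ with projection parameters (see e.g.\ \cite{Fa:STCstar}), and the fact that $B\prec A$ implies $M_m(B)\prec M_m(A)$, since matrix amplification is a definable operation (see \cite{Muenster}).

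The one step you should patch is injectivity of $K_0(B)\to K_0(A)$. A kernel element is $[p]-[q]$ with $p,q$ projections in some $M_m(B)$, but $[p]_A=[q]_A$ yields in the first instance only a \emph{stable} Murray--von Neumann equivalence: $p\oplus 1_r\sim q\oplus 1_r$ in $M_{m+r}(A)$ for some $r$ (one absorbs the auxiliary projection $e$ with $p\oplus e\sim q\oplus e$ using $e\oplus(1-e)\sim 1\oplus 0$). Your argument as written starts from $p\sim q$ in $M_m(A)$, so by itself it only shows that Murray--von Neumann equivalence, rather than equality of $K_0$-classes, reflects to $B$. The repair is immediate: $p\oplus 1_r$ and $q\oplus 1_r$ again have entries in $B$, so your elementarity-plus-stability argument applies to them and gives $[p]_B+[1_r]_B=[q]_B+[1_r]_B$, hence $[p]_B=[q]_B$. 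With that fix the proof is complete.
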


The proofs of Theorem~\ref{T.non-uniqueness} and Theorem~\ref{T.many} use the set-theoretic notion of a closed unbounded (club) set. We recall the definitions, and direct the reader to  \cite[\S 6.2--6.4]{Fa:STCstar} for additional information. 

\begin{definition}
If $X$ is an uncountable set, then a family $\sfC$ of countable subsets of $X$ is club if the following two requirements are met.  
\begin{enumerate}
\item For every increasing sequence $Z_n$, for $n\in \bbN$, in $\sfC$ we have $\bigcup_n Z_n\in \sfC$. 
\item For every countable $Y\subseteq X$ there exists $Z\in \sfC$ such that $Y\subseteq Z$. 	
\pushcounter
\end{enumerate}
\end{definition}
\begin{definition}
  If $A$ is a nonseparable complete metric space, then a family $\sfC$ of separable closed subsets of $A$ is a club if the following two requirements are met. 
\begin{enumerate}
\popcounter 
\item For every increasing sequence $C_n$, for $n\in \bbN$, in $\sfC$ we have $\overline{\bigcup_n C_n}\in \sfC$. 
\item For every countable $B\subseteq A$ there exists $C\in \sfC$ such that $B\subseteq C$. 	
\pushcounter
\end{enumerate}
\end{definition}

We use the common jargon and say that `there are club many separable subspaces of $A$ with property $P$' if the set of separable subspaces of $A$ with property $P$ includes a club. By the Downwards L\"owenheim--Skolem Theorem, club many separable subspaces of a \cstar-algebras are \cstar-subalgebras. By \cite[Lemma~7.4.4]{Fa:STCstar}, if two nonseparable  \cstar-algebras $A$ and $B$ are isomorphic then club many separable \cstar-algebras of $A$ are complemented if and only if club many separable \cstar-subalgebras of $B$ are complemented.

By using functoriality (Proposition~\ref{P.funct}) we can express the following lemma whose proof is, being straightforward, omitted. 

\begin{lemma} \label{L.club} Suppose that  $(\kappa,\Gamma,\Theta)$ is a CCR triple such that $\kappa$ is uncountable and that $\sfC$ is a club of countable subsets of $\kappa$.  Then   the \cstar-subalgebras of the form $\CCR_{(X,\Gamma_X,\Theta\rs\Gamma_X)}$ for  $X\in \sfC$  form a club of separable substructures of $\CCR_{(\kappa,\Gamma,\Theta)}$. \qed  
\end{lemma}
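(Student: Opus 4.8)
The plan is to verify the two defining properties of a club of separable closed subspaces for the family $\cF=\{A_X\mid X\in\sfC\}$, where I write $A_X=\CCR_{(X,\Gamma_X,\Theta\rs\Gamma_X)}$, identified via Lemma~\ref{L.AF}\eqref{2.L.finite} (together with functoriality, Proposition~\ref{P.funct}) with the \cstar-subalgebra $\cst(u_g\mid g\in\Gamma_X)$ of $\CCR_{(\kappa,\Gamma,\Theta)}$. Each such $A_X$ is norm-closed, and for countable $X$ it is separable: the group $\Gamma_X=\bigoplus_{\xi\in X}C_\xi$ is countable, so $A_X$ is the closed \cstar-algebra generated by the countable set $\{u_\xi\mid\xi\in X\}$. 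The one structural fact I would isolate first is a \emph{monotonicity} statement: for $X,X'\subseteq\kappa$ one has $A_X\subseteq A_{X'}$ if and only if $X\subseteq X'$. The nontrivial direction uses Lemma~\ref{L.ell2}: since $\{u_g\mid g\in\Gamma\}$ is an orthonormal basis of $(A,\langle\cdot,\cdot\rangle_\tau)$ and the operator norm dominates $\|\cdot\|_{2,\tau}$, every element of $A_{X'}$ has its $\ell_2$-expansion supported on $\Gamma_{X'}$, while conversely Corollary~\ref{C.Conditional} shows that every element supported on $\Gamma_{X'}$ lies in $A_{X'}$; thus $A_{X'}=\{a\mid\supp(a)\subseteq\Gamma_{X'}\}$. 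As $u_\xi$ is supported at $g(\xi)$ and $g(\xi)\in\Gamma_{X'}$ exactly when $\xi\in X'$, membership $u_\xi\in A_{X'}$ forces $\xi\in X'$, giving the claim.

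For cofinality (the second club requirement), I would take an arbitrary countable $B\subseteq\CCR_{(\kappa,\Gamma,\Theta)}$ and, for each $b\in B$, fix a sequence in $\bbC_\Theta\Gamma$ converging to $b$ in norm. Each approximating term is a finite combination of the $u_g$, and each $g=\prod_\xi g(\xi)^{m(\xi)}$ involves only finitely many coordinates $\xi\in\kappa$; collecting all coordinates occurring across these countably many approximations yields a countable set $Y\subseteq\kappa$. Since $\sfC$ is a club of countable subsets of $\kappa$, I would choose $X\in\sfC$ with $Y\subseteq X$. Then every $b\in B$ is a norm-limit of elements of $\bbC_\Theta\Gamma_Y\subseteq\bbC_\Theta\Gamma_X\subseteq A_X$, and closedness of $A_X$ gives $B\subseteq A_X$. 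Hence $\cF$ is cofinal among the separable subspaces.

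For closure under increasing unions, suppose $(A_{X_n})_n$ is an increasing sequence in $\cF$ with $X_n\in\sfC$. Here the monotonicity established above does the essential work: from $A_{X_n}\subseteq A_{X_{n+1}}$ it gives $X_n\subseteq X_{n+1}$, so $(X_n)_n$ is increasing in $\sfC$ and therefore $X=\bigcup_n X_n\in\sfC$ by the club property of $\sfC$. It then remains to identify the limit: $A_{X_n}\subseteq A_X$ for all $n$ gives $\overline{\bigcup_n A_{X_n}}\subseteq A_X$, while every generator $u_\xi$ with $\xi\in X$ satisfies $\xi\in X_n$ for some $n$, so $u_\xi\in A_{X_n}$ and hence $A_X=\cst(u_\xi\mid\xi\in X)\subseteq\overline{\bigcup_n A_{X_n}}$. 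Thus $\overline{\bigcup_n A_{X_n}}=A_X\in\cF$. Combined with the previous paragraph and the Downward L\"owenheim--Skolem remark identifying these separable subspaces as \cstar-subalgebras, this shows that $\cF$ is a club.

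I expect the monotonicity equivalence to be the only genuinely load-bearing step: it is precisely what converts closure of $\sfC$ under increasing unions into closure of $\cF$ under increasing unions, since the paper's definition of a club of countable sets guarantees closure only under \emph{increasing} $\omega$-chains and not under arbitrary countable unions. Everything else is a routine transfer between countable coordinate sets and the separable subalgebras they generate, which is presumably why the authors omitted the proof.
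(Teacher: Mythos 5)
Your proof is correct, and it supplies exactly what the paper leaves out: the paper omits the proof of Lemma~\ref{L.club} as ``straightforward,'' pointing only to functoriality (Proposition~\ref{P.funct}), and your route---identify $\CCR_{(X,\Gamma_X,\Theta\rs\Gamma_X)}$ with $\cst(u_g\vert g\in\Gamma_X)$ via Proposition~\ref{P.funct} and then verify the two club axioms---is the intended one. You are also right that the monotonicity equivalence $A_X\subseteq A_{X'}\Leftrightarrow X\subseteq X'$ (which you prove correctly from Lemma~\ref{L.ell2} and Corollary~\ref{C.Conditional}) is the one load-bearing step: the club property of $\sfC$ only covers \emph{increasing} unions, so an increasing chain of subalgebras must first be converted into an increasing chain of index sets. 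One caveat worth recording: your argument, like all of \S\ref{S.Funct} and \S\ref{S.Properties} on which it rests, uses that $\Gamma$ is locally finite, whereas Lemma~\ref{L.club} is stated for arbitrary CCR triples; this is not a defect of your proof relative to the paper's---its own appeal to Proposition~\ref{P.funct} carries the same hypothesis, and the lemma is only ever applied to locally finite groups---but taken at face value the statement would require either adding that hypothesis or extending the results of \S\ref{S.Funct}--\S\ref{S.Properties} beyond the locally finite case, as the paper itself suggests is ``probably'' possible.
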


\begin{theorem}\label{T.non-uniqueness}
Suppose that $A$ is a UHF algebra such that for some prime $p$ every element of $K_0(A)$ is divisible by $p$. Then for every uncountable cardinal $\kappa$ there exists an AM algebra of density character $\kappa$  with the same Elliott invariant as $A$ that is not UHF.\footnote{A much easier proof of this theorem (which would not lead to a proof of Theorem~\ref{T.many}) is described in the paragraphs following Question~\ref{Q.main}.} \end{theorem}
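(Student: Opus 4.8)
The plan is to build, for each uncountable cardinal $\kappa$, a CCR triple $(\kappa,\Gamma,\Theta)$ whose associated algebra $\CCR_{(\kappa,\Gamma,\Theta)}$ realizes the correct Elliott invariant while failing to be UHF, and to detect the failure of UHF-ness via the complementation machinery developed in Section~\ref{S.Properties}. The hypothesis that every element of $K_0(A)$ is divisible by some prime $p$ means $p^\infty$ divides $n_A$, so by the classification of UHF algebras we may write $A\cong \bigotimes_{\bbN} M_p(\bbC)\otimes A'$ for a suitable UHF $A'$. The idea is to use the $M_p(\bbC)$-tensor-factors to install, on a countable index set, generators whose commutation is governed by a $p$-th root of unity $\lambda=\exp(2\pi i/p)$, arranged so that the group-theoretic center $Z_{\Gamma,\Theta}(X)$ behaves pathologically on a nonstationary set of coordinates.

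First I would fix $\Gamma=\bigoplus_{\kappa} \bbZ/p\bbZ$ together with whatever extra summands are needed to account for the other prime powers dividing $n_A$ (these extra summands can be given trivial $\Theta$ and hence, by Lemma~\ref{L.tensor}, factor off as a fixed UHF tensor factor common to $A$ and to our construction). The real work is in choosing $\Theta$ on the $\bbZ/p\bbZ$-summands so that the resulting algebra is AM with the $K_0$ of $A$ but is \emph{not} complemented on a club. By Lemma~\ref{L.K-theory}, any nonseparable AM algebra has a unique trace and trivial $K_1$, and $K_0$ is computed on separable elementary submodels; by Lemma~\ref{L.club}, those submodels are exactly the $\CCR_{(X,\Gamma_X,\Theta\rs\Gamma_X)}$ for $X$ ranging over a club, and by Lemma~\ref{L.UHF} each such separable piece is UHF with the correct supernatural number. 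This pins down the Elliott invariant, so the only thing left to engineer is non-UHF-ness in the nonseparable algebra.

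The second, decisive step is to arrange $\Theta$ so that the conclusion of Lemma~\ref{L.commutant}\eqref{2.commutant} fails for club many $X$: that is, to make $\Gamma$ \emph{not} generated by $\Gamma_X$ together with $Z_{\Gamma,\Theta}(X)$ for a club of countable $X\subseteq\kappa$. Concretely I would pair up coordinates and introduce a $\lambda$-twist linking each coordinate to its partner in such a way that no countable $X$ of the club can be closed under the pairing, forcing some partner to lie outside both $\Gamma_X$ and the relative commutant $Z_{\Gamma,\Theta}(X)$. Since complementedness is an isomorphism-invariant property holding on a club (by the cited \cite[Lemma~7.4.4]{Fa:STCstar}), and since every separable UHF subalgebra of a genuine UHF algebra \emph{is} complemented on a club, exhibiting club-many non-complemented separable subalgebras shows $\CCR_{(\kappa,\Gamma,\Theta)}$ cannot be UHF.

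The main obstacle I anticipate is the simultaneous balancing of the two requirements on $\Theta$: it must be degenerate enough (trivial twist on most pairs) that every separable slice is genuinely UHF with the prescribed supernatural number via Lemma~\ref{L.UHF}, yet nondegenerate enough on a cofinal pattern of coordinates that $Z_{\Gamma,\Theta}(X)$ systematically misses generators needed to recover all of $\Gamma$. The delicate point is checking that the bad set of $X$ is genuinely a club rather than merely stationary or cofinal, since the isomorphism-invariance argument requires club-many witnesses; I expect to secure this by a closure-point argument showing that the set of $X$ failing the generation condition in Lemma~\ref{L.commutant}\eqref{2.commutant} contains a club, using that the pairing function has no countable closed initial segments of the requisite form.
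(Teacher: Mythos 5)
Your overall skeleton does match the paper's proof: build a CCR algebra whose finite stages are full matrix algebras (hence AM, with the Elliott invariant pinned down via Lemmas~\ref{L.UHF}, \ref{L.club} and \ref{L.K-theory}), and rule out UHF-ness by exhibiting club many non-complemented separable \cstar-subalgebras, combining Lemma~\ref{L.commutant} with the isomorphism-invariance of complementation on a club (\cite[Lemma~7.4.4]{Fa:STCstar}). The gap is in the one step where all the work lies: your concrete mechanism for defeating complementation cannot exist. If each coordinate is $\lambda$-twisted only with a single partner (a matching on the index set), then the countable subsets of $\kappa$ closed under the matching form a club, since the closure points of any function on $\kappa$ (indeed, of any countable family of finitary functions) always form a club. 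The set of countable $X$ \emph{not} closed under the pairing is therefore disjoint from a club, hence non-stationary -- it is as far from containing a club as possible, so the ``closure-point argument'' you hope for has nothing to produce. Worse, a pure pairing with trivial twist elsewhere is exactly the hypothesis of Lemma~\ref{L.UHF}, so your algebra would be isomorphic to $\bigotimes_\kappa M_p(\bbC)$, i.e., UHF, defeating the purpose of the construction.

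The missing idea is a single generator of \emph{uncountable} twist-degree. The paper adjoins one extra generator $g(*)$ of order $p$ satisfying $\Theta(g(\alpha,0),g(*))=\lambda$ for all $\alpha<\kappa$ simultaneously. Then every countable $F$ containing $g(*)$ omits some $g(\alpha,0)$, and a computation of $Z_{\Gamma,\Theta}(F)$ (whose elements are products of generators outside $F$ with trivial net twist against $g(*)$) shows that such $g(\alpha,0)$ does not lie in the subgroup generated by $\Gamma_F\cup Z_{\Gamma,\Theta}(F)$; by Lemma~\ref{L.commutant}\eqref{2.commutant} this kills complementation on a club. Note that this resolves your ``balancing'' worry in the direction opposite to your proposal: once $g(*)$ is present, AM-ness is no longer automatic, and the paper recovers it by a change of generators inside each finite stage, replacing $g(*)$ by $h=g(*)\prod_{j<m}g(\alpha(j),0)$ so that $h$ pairs with the one deliberately unmatched generator and each $B_F$ is again a full matrix algebra. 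A secondary imprecision: the summands accounting for the other primes dividing $n_A$ cannot carry a fully trivial $\Theta$ (that would yield a commutative factor, the \cstar-algebra of $\bigoplus_i\bbZ/p(i)\bbZ$); they need internal twists pairing their generators, as with the paper's $f_0(i,j)$, $f_1(i,j)$, and only the twist \emph{between} those summands and the $\kappa$-indexed part should be trivial so that Lemma~\ref{L.tensor} applies.
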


\begin{proof} Let $p(i)$, for $i\in \bbN$, be an enumeration of all primes $q$ such that $M_q(\bbC)$ unitally embeds into $A$  and let $\prod_i p(i)^{k(i)}$ be the generalized integer of $A$. Thus $k(i)\geq 1$ for all $i$. We may assume that $p=p(0)$ (the enumeration is not assumed to be in the increasing order). 

Let $J=\kappa\times 2\sqcup \{*\}$ and 
\begin{enumerate}
\item [(a)]	$\textstyle\Gamma=\bigoplus_{i>0} \bigoplus_{j<k(i)} (\bbZ/p(i)\bbZ)^2\times \bigoplus_J \bbZ/p\bbZ$. 
\item [(b)] Let $f_0(i,j)$, $f_1(i,j)$ for $i>0$ and  j$<k(i)$, $g(\alpha,i)$, for $\alpha<\kappa$, and $i<2$, and $g(*)$ be the generators of the corresponding direct summands of $\Gamma$. 
\item [(c)] Let $\cG_-=\{f_0(i,j),f_1(i,j)\vert i>0, j<k(i)\}$ and $\cG_+=\{g(\alpha,i)\vert \alpha<\kappa,i<2\}$. Then   $\cG=\cG_-\cup \cG_+\cup \{g(*)\}$ is a  generating set for   $\Gamma$.
\end{enumerate}
Let $\lambda=\exp(2\pi i/n)$. Let $\Theta\colon\cG^2\to \bbT$   be such that the following conditions hold. 
\begin{enumerate}
\item \label{1.bb} $\Theta(g(\alpha,0), g(\alpha,1))=\lambda$ for $\alpha<\kappa$. 
\item $\Theta(g(\alpha,1), g(\alpha,0))=\overline{\lambda}$, for $\alpha<\kappa$. 
\item $\Theta(g(\alpha,0), g(*))=\lambda$, for all $\alpha<\kappa$. 
\item $\Theta(g(*),g(\alpha,0))=\overline{\lambda}$, for all $\alpha<\kappa$. 
\item $\Theta(f_0(l,j),f_1(l,j))=\exp(2\pi/p(j))$, for $l>0$ and $j<k(l)$.
\item \label{5.bb} $\Theta(f_1(l,j),f_0(l,j))=\exp(-2\pi/p(j))$, for $l>0$ and $j<k(l)$.
\item $\Theta(g,h)=1$ for $g$ and $h$ in $\cG$ for which $\Theta$ hasn't been defined by \eqref{1.bb}--\eqref{5.bb}. 
\end{enumerate}
We first prove that $B=\CCR_{(\kappa,\Gamma,\Theta)}$ is AM. 
For $F\Subset\cG$ let $\Gamma_F$ denote the subgroup of $\Gamma$ generated by $F$.  By Proposition~\ref{P.funct}, $B_F=\cst(\Gamma_F)$  is  naturally identified with $\CCR_{(F,\Gamma_F, \Theta\rs\Gamma_F)}$. 

Since $B$ is the inductive limit of \cstar-subalgebras of the form $B_F$ for $F\subset \cG$, it will suffice to find a cofinal set $\cF$ of $F\Subset\cG$ such that $B_F$ is a full matrix algebra for every $F\in \cF$. 

Let $\cF$ be the set of all $F\Subset \cG$ such that   the following conditions hold
\begin{enumerate}
\item\label{1.F} For all $i>0$ and $j<k(i)$, $f_0(i,j)\in F$ if and only if $f_1(i,j)\in F$. 
\item \label{2.F} For all $\alpha<\kappa$, $g(\alpha,0)\in F$ implies $g(\alpha,1)\in F$ and there exists a unique $\beta=\beta(F)$ such that $g(\beta,1)\in F$ but $g(\beta,0)\notin F$. 
\item \label{3.F} $g(*)\in F$. 	
\end{enumerate}
Clearly for every $F_0\Subset \cG$ there is $F\in \cF$ such that $F_0\subseteq F$. 
We claim that $F\in \cF$ implies $B_F$ is a full matrix algebra. The group $\Gamma$ is clearly locally finite, and this fact will be used in the remaining part of this proof. With $F(-)=F\cap \cG_-$ and $F(+)={F\cap \cG_+}$,  Lemma~\ref{L.tensor} implies that $B_F\cong B_{F(-)}\otimes B_{F(+)}$. The `moreover' part of  Lemma~\ref{L.UHF} and    \eqref{1.F} together imply that $B_{F(-)}$ is isomorphic to a full matrix algebra.  It remains to prove that $B_{F(+)}$ is isomorphic to a full matrix algebra. We will prove this by replacing the generator $g(*)$ with $h\in \Gamma$ such that $B_{F(+)}=\cst((F(+)\setminus \{g(*)\})\cup \{h\})$ and the \cstar-algebra on the right-hand side is a full matrix algebra.  Conditions \eqref{2.F} and \eqref{3.F} imply that there is a finite subset $\alpha(j)$, for $j\leq m$ of $\kappa$ such that 
\[
F\cap\cG_+=\{g(\alpha(j),0), g(\alpha(j),1)\vert j<m\}\cup  \{g(\alpha(m),0)\}.  
\]
Denoting the first set on the right-hand side by $G$, the `moreover' part of Lemma~\ref{L.UHF} implies that $B_G$ is a full matrix algebra. Let 
\[
\textstyle h=g(*)\prod_{j<m} g(\alpha(j),0) . 
\]
This is a product of generators, each of order $p$, and therefore it has order $p$. Moreover, for every $g\in \cG$ we have 
\[
\textstyle \Theta(g,h)=\prod_{j<m} \Theta(g,g(\alpha(j),0))\cdot \Theta(g,g(*)). 
\]
This implies that $\Theta(g(\alpha(m),0),h)=\lambda$ and that $\Theta(g(\alpha(j),i),h)=1$ for all $j<m$ and $i<2$. Also, $g(*)=h\prod_{j<m} g(\alpha(j),0)^{p-1}$ hence $B_F=\cst((F\setminus\{g(*)\})\cup \{h\})$.  Therefore Lemma~\ref{L.tensor} implies $B_{F(+)}$ is isomorphic to a full matrix algebra and concludes the proof that $B$ is AM. 

Towards proving  that $B$ is not a tensor product of full matrix algebras we first  prove that $B$ does not have club many complemented separable \cstar-subalgebras. Since $\cG_-$ is countable and $\kappa$ is uncountable, by using  Lemma~\ref{L.club} it will suffice to prove that 
 
 \begin{claim} If $\cG_-\cup \{g(*)\}\subseteq F\subseteq \cG$ and $F$ is countable, then $B_F$ is not complemented in $B$. 	
 \end{claim}

\begin{proof} 
We claim that (writing an element of $\Gamma$ as a product of generators  $\prod_{i<m} h(i)$ in the reduced form) 
\begin{multline*}
\textstyle Z_{\Gamma,\Theta}(F)=\langle \prod_{i<m} h(i)\vert m\in \bbN,\\
\textstyle h(i)\in \cG\setminus F, \text{ and } \prod_{i<m} \Theta(h(i), g(*))=1\}. 
\end{multline*}
Only the direct inclusion requires a proof. 
Suppose that $\prod_{i<m} h(i)$ does not belong to the right-hand side. We consider two possible cases. 

Suppose first that   $h(r)\in F$ for some $r<m$. The word $\prod_{i<m} h(i)$ is in the reduced form and  the generators in $F\setminus \{g(*)\}$ come in non-commuting pairs ($f_0(l,j)$ and $f_1(l,j)$, $g(\alpha,0)$ and $g(\alpha,1)$). Therefore 
\[
\textstyle \prod_{i<m} h(i)=h(r)^d h,
\]
 where $h$ is a product of generators distinct both from $h(r)$ and its pair, and $h(r)^d\neq 1$.  Suppose in addition that $h(r)\neq g(*)$. This implies that $h(r)^d h$ does not commute with the generator paired with $h(r)$. 
Now assume that $h(r)=g(*)$. Since $F$ is infinite, there exists $g(\alpha,1)\in F$ such that $g(\alpha,j)$ for $j<2$   is not a factor of $h$. Then  $\Theta(g(\alpha,1),g(r)^d h)=\lambda^d\neq 1$. 

It remains to analyze the remaining case, when $h(i)\notin F$ for all $i<m$ but $\prod_{i<m} \Theta(h(i), g(*))\neq 1$. This clearly implies that $g(*)$ does not commute with $\prod_{i<m} h(i)$.

This proves that $Z_{\Gamma,\Theta}(F)$ is as claimed. Since $\Gamma$ is locally finite, by Lemma~\ref{L.commutant}, $B_F$ is complemented in $\CCR_{(\kappa,\Gamma,\Theta)}$ if and only if $\Gamma$ is generated by $F\cup Z_{\Gamma,\Theta}(F)$. Since $\kappa$ is uncountable, we can find $\alpha< \kappa$ such that $g(\alpha,0)\notin F$. Then $\Theta(g(\alpha,0),g(*))=\lambda\neq 1$, and $g(\alpha,0)\notin \Lambda$. 
\end{proof}

Since $\kappa$ is uncountable, the countable subsets of $\cG$ that include $\cG_-\cup \{g(*)\}$ form a club, and Claim implies that club many separable \cstar-subalgebras of $\CCR_{(\kappa,\Gamma,\Theta)}$ are not complemented. In a UHF algebra, club many separable \cstar-subalgebras are complemented (\cite[Example~7.4.2]{Fa:STCstar}). By \cite[Lemma~7.4.4]{Fa:STCstar}, if two \cstar-algebras $A$ and $B$ are isomorphic then club many separable \cstar-algebras of $A$ are complemented if and only if club many separable \cstar-subalgebras of $B$ are complemented,  and therefore  $B$  is not UHF.  
\end{proof}

\section{Non-classification}
\label{S.non-classification}

In this section we prove our main non-classification result. 

\begin{theorem}\label{T.many} Suppose that $A$ is a UHF algebra such that for some prime $p$ every element of $K_0(A)$ is divisible by $p$. Then for every uncountable cardinal $\kappa$ there exists $2^\kappa$ nonisomorphic AM algebras of density character $\kappa$ with the same Elliott invariant as $A$. 
\end{theorem}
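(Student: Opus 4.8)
The plan is to upgrade the single-example construction of Theorem~\ref{T.non-uniqueness} into a family of $2^\kappa$ mutually non-isomorphic examples, by attaching to each subset $S\subseteq\kappa$ (or each element of a suitable almost-disjoint-type family) its own CCR triple $(\kappa,\Gamma,\Theta_S)$, and then distinguishing the resulting algebras $B_S=\CCR_{(\kappa,\Gamma,\Theta_S)}$ via a club-invariant that reads off $S$ up to a club. First I would keep the group $\Gamma$ (hence the $K_0$ group, the trace, and the AM property) exactly as in the proof of Theorem~\ref{T.non-uniqueness}, so that Lemma~\ref{L.K-theory} guarantees all $B_S$ share the Elliott invariant of $A$; only the cocycle $\Theta_S$, i.e. which generators are linked to the distinguished generator $g(*)$, will vary with $S$. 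The key point is that the set $\{\alpha<\kappa : g(\alpha,0)\text{ fails to commute with }B_F\}$ detected in the non-complementation Claim is precisely the data that $\Theta_S$ encodes, so different $S$ force genuinely different complementation patterns on club many separable subalgebras.

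The main steps, in order, are as follows. First, fix a family $\{S_\nu:\nu<2^\kappa\}$ of subsets of $\kappa$ such that no two are equal modulo a club (for instance, using a family of almost disjoint stationary sets, or directly a family of size $2^\kappa$ with pairwise symmetric differences that are stationary). Second, for each $\nu$ define $\Theta_{S_\nu}$ so that $g(\alpha,0)$ is linked to $g(*)$ by $\lambda$ exactly when $\alpha\in S_\nu$, verifying that \ref{B.1}--\ref{B.4} still hold so that $\CCR_{(\kappa,\Gamma,\Theta_{S_\nu})}$ exists and, by the same cofinal-full-matrix-subalgebra argument as in Theorem~\ref{T.non-uniqueness}, is AM with the prescribed invariant. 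Third, extract from the construction an isomorphism-invariant of the form ``the club-equivalence class of the set of ordinals $\alpha$ such that a club-many separable subalgebra containing $g(*)$ and omitting $g(\alpha,0)$ is non-complemented,'' and show that this invariant recovers $S_\nu$ modulo a club; since \cite[Lemma~7.4.4]{Fa:STCstar} shows isomorphic nonseparable algebras agree on club-many complementation behaviour, distinct $S_\nu$ yield non-isomorphic $B_{S_\nu}$.

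The delicate point, and the one I would develop most carefully, is the rigidity/invariance argument in the third step. It is not enough that the construction builds $S_\nu$ into $\Theta_{S_\nu}$; I must argue that any isomorphism $B_{S_\mu}\cong B_{S_\nu}$ would transport a club of complemented (resp. non-complemented) separable subalgebras to a club of the same type, and that analyzing complementation via Lemma~\ref{L.commutant} (which characterizes complementation of $\cst(u_g:g\in\Gamma_X)$ by $\Gamma$ being generated by $\Gamma_X$ and the centralizer $Z_{\Gamma,\Theta}(X)$) forces the two underlying ``linking sets'' to coincide on a club. The obstacle is that an abstract isomorphism need not respect the generators $u_\xi$ or the direct-sum decomposition of $\Gamma$, so the invariant must be stated intrinsically (in terms of complementation of separable subalgebras, which is isomorphism-invariant up to clubs) and then shown to compute the combinatorial data $S_\nu$. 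I expect this to reduce, via the Downward L\"owenheim--Skolem theorem and Lemma~\ref{L.club}, to the statement that for club-many countable $X\subseteq\kappa$ the non-complementation witnessed in Theorem~\ref{T.non-uniqueness} occurs if and only if $X$ ``misses'' an index in $S_\nu$ linked to $g(*)$, and that two such patterns agreeing on a club force $S_\mu=S_\nu$ modulo a club, contradicting the choice of the family $\{S_\nu\}$.
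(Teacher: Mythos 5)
Your first two steps would go through (AM-ness and the Elliott invariant computation are essentially as in Theorem~\ref{T.non-uniqueness}), but the construction in your second step cannot be rescued by any invariant in the third step, because the algebras $B_S=\CCR_{(\kappa,\Gamma,\Theta_S)}$ do not remember the set $S$ at all: they remember only the pair of cardinals $(|S|,|\kappa\setminus S|)$. Indeed, if $\sigma$ is any bijection of $\kappa$ with $\sigma[S]=T$, then the induced permutation of the index set (sending $(\alpha,i)$ to $(\sigma(\alpha),i)$, fixing $*$ and the indices of $\cG_-$) carries the relation set $\cR(\Gamma,\Theta_S)$ exactly onto $\cR(\Gamma,\Theta_T)$, since your linking pattern (``$g(\alpha,0)$ is linked to $g(\alpha,1)$ for every $\alpha$, and to $g(*)$ exactly when $\alpha\in S$'') is preserved by every such $\sigma$. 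By the universal property, or by Proposition~\ref{P.funct} applied in both directions, $B_S\cong B_T$. Hence your family $\{B_{S_\nu}\}$ realizes at most as many isomorphism classes as there are pairs of cardinals summing to $\kappa$ --- far fewer than $2^\kappa$ --- no matter how cleverly the sets $S_\nu$ are chosen; pairwise stationary symmetric differences are simply invisible to the construction. The same computation shows concretely why your proposed invariant collapses: for countable $X\subseteq\kappa$ and $F=\cG_-\cup\{g(*)\}\cup\{g(\alpha,i)\vert \alpha\in X,\ i<2\}$, Lemma~\ref{L.commutant} gives that $B_F$ is complemented in $B_S$ if and only if $S\subseteq X$, so the complementation pattern detects only whether $S$ is countable, not which ordinals belong to it; moreover \cite[Lemma~7.4.4]{Fa:STCstar} transfers only the binary statement ``club many separable subalgebras are complemented,'' so it can separate at most two isomorphism classes at a time, never $2^\kappa$.

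The failure is structural: a ``star'' linking pattern attached to a subset of $\kappa$ has an enormous symmetry group, so subsets of $\kappa$ are the wrong combinatorial objects to encode. This is precisely why the paper encodes \emph{linear orderings} instead: it builds a functor $\bbF$ with $\Theta(g(x,0),g(y,1))=\lambda$ if and only if $x\leq y$ (an asymmetric condition that no re-indexing can undo), observes that the pairs $(u_{g(x,0)},u_{g(x,1)})$ form a $\varphi$-chain for the quantifier-free formula $\varphi(a,b,c,d)=\tfrac12\|ad-da\|$ which is weakly $(\aleph_1,\varphi)$-skeleton like, and then invokes Shelah-style non-structure theory (Lemma~\ref{L.nonclassification}, resting on Lemma~\ref{L.FaKa}, i.e., \cite[Lemma~6.4]{FaKa:NonseparableII} and \cite{FaSh:Dichotomy}) to convert $2^\kappa$ sufficiently different linear orders of cardinality $\kappa$ into $2^\kappa$ nonisomorphic algebras. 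Some such machinery appears unavoidable: even with a correct encoding, an abstract isomorphism need not respect generators, and a club-complementation dichotomy is far too coarse to recover uncountably much combinatorial data. To salvage your outline you would need to (i) replace subsets by structures rigid under re-indexing, such as linear orders, and (ii) replace \cite[Lemma~7.4.4]{Fa:STCstar} by a tool capable of distinguishing $2^\kappa$ objects at once, which is exactly what the weakly skeleton-like chain machinery provides.
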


The proof of this result uses basic continuous model theory and Shelah's non-structure theory (\cite{shelah2019general}, \cite{shelah2000non}), and we assume that the reader is familiar with the former (see e.g., \cite[Appendix C]{Fa:STCstar} or \cite{Muenster}); the latter (incomparably more complex) ingredient will be treated as a blackbox.   The following is an analog of Shelah's  order property (OP), first used in the context of \cstar-algebras in \cite{FaHaSh:Model1}.
\begin{definition}
Suppose that  $\varphi(\bar x, \bar y)$ is a formula of the language of \cstar-algebras in $2n$ variables, in which $\bar x$ and $\bar y$ are $n$-tuples of the same sort.  Define a binary relation $\prec_\varphi$ on $A_1^n$  by letting $\bar a\prec_\varphi \bar b$ if and only if $\bar a$ and $\bar b$ are of the appropriate sort and the following holds 
\[
\varphi^A(\bar a,\bar b)=1\text{ and } \varphi^A(\bar b, \bar a)=0.
\]
It is not required that $\prec_\varphi$ is transitive (but it is clearly antisymmetric). A \emph{$\varphi$-chain} is an indexed set $\bar a_x$, for $x\in I$, where $(I,<)$ is a linear ordering and $x<y$ if and only if $\bar a_x\prec_\varphi \bar a_y$. 
\end{definition}

Lemma~\ref{L.nonclassification} below is an immediate consequence of Shelah's non-structure results, and we sketch a proof for reader's convenience. It should be emphasized that, in spite of being stated as a result about \cstar-algebras, this result is about general metric structures. 

\begin{lemma} \label{L.nonclassification} Suppose that $\kappa$ is an uncountable cardinal, $\cK$ is a subcategory of \cstar-algebras,  and there exist a quantifier-free formula $\varphi$ and a functor $\bbF$  from the category of linear orderings into $\cK$ such that for every $J$,  $\bbF(J)$ is in $\cK$ and it is  generated by a $\varphi$-chain $\bar a_x$, for $x\in J$. Then for every uncountable cardinal $\kappa$ there are $2^\kappa$ nonisomorphic \cstar-algebras of density character $\kappa$ in $\cK$.  

In addition, for every separable \cstar-algebra $A$ there are $2^\kappa$ nonisomorphic \cstar-algebras of the form $A\otimes B$ for $B\in \cK$ of density character~$\kappa$. 
\end{lemma}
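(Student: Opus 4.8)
The plan is to invoke Shelah's non-structure theory as a blackbox, exactly as the lemma's preamble advertises. The setup hands us a quantifier-free formula $\varphi$ and a functor $\bbF$ from linear orderings into $\cK$ such that each $\bbF(J)$ is generated by a $\varphi$-chain indexed by $J$. The order property, via $\prec_\varphi$, is precisely the combinatorial hook Shelah's machinery needs: it lets one distinguish structures by the order-types realized among their generators. So the first step is to observe that for distinct linear orderings $J$ and $J'$, the algebras $\bbF(J)$ and $\bbF(J')$ carry, through $\prec_\varphi$, a faithful enough trace of the order type of $J$ and $J'$ respectively that an isomorphism $\bbF(J)\cong \bbF(J')$ would force a correspondence between their $\varphi$-chains, and hence a relationship between $J$ and $J'$.

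**Next I would** produce a large family of pairwise non-isomorphic linear orderings of cardinality $\kappa$. The classical Shelah non-structure input (e.g.\ from \cite{shelah2019general,shelah2000non}, and in the $\cst$-algebra context \cite{FaHaSh:Model1}) yields, for every uncountable $\kappa$, a family of $2^\kappa$ linear orderings of size $\kappa$ that are pairwise non-embeddable in a way compatible with the order property; more precisely, it produces $2^\kappa$ orderings such that no two of the corresponding structures built from a $\varphi$-chain can be isomorphic. Feeding these orderings through $\bbF$ gives $2^\kappa$ algebras in $\cK$. Each has density character $\kappa$: it is generated by a $\varphi$-chain of size $\kappa$, so its density character is at most $\kappa$, and the order property rules out a smaller generating set, giving exactly $\kappa$. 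The heart of the argument is that an isomorphism $\bbF(J)\to\bbF(J')$ must respect $\varphi^A$ up to the metric, and a standard reflection/club argument (of the kind used in \cite{Fa:STCstar}) shows that it sends the $\varphi$-chain of $\bbF(J)$ to something cofinally interleaved with the $\varphi$-chain of $\bbF(J')$, forcing an order-isomorphism between club-many pieces of $J$ and $J'$; the non-structure theorem guarantees $2^\kappa$ orderings for which this cannot happen.

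**For the "in addition" clause** I would simply tensor. Fix a separable \cstar-algebra $A$ and, for each of the $2^\kappa$ algebras $B=\bbF(J)$ just produced, form $A\otimes B$. The $\varphi$-chain generating $B$ still sits inside $A\otimes B$ (as $1_A\otimes \bar a_x$), and because $A$ is separable it contributes only a separable piece, so $A\otimes B$ again has density character $\kappa$. The same $\prec_\varphi$-analysis applies after restricting attention to a club of separable elementary submodels that absorb a fixed separable copy of $A$: an isomorphism $A\otimes B\cong A\otimes B'$ reflects, on club-many separable subalgebras, to data recovering the order type carried by the $\varphi$-chain, so again only isomorphic orderings can yield isomorphic tensor products. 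Hence $2^\kappa$ of the $A\otimes B$ are pairwise non-isomorphic.

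**The main obstacle** is the faithful transfer of the order-property data across an abstract isomorphism, i.e.\ verifying that an isomorphism genuinely must match the two $\varphi$-chains closely enough for the non-structure theorem to bite. This is where the quantifier-free hypothesis on $\varphi$ and the functoriality of $\bbF$ are essential: quantifier-freeness makes $\varphi^A$ preserved exactly by $\cst$-isomorphisms on the generating chain, and functoriality ensures the chain is canonical rather than an artifact of a presentation. Packaging this so that Shelah's theorem can be cited verbatim—rather than reproved—is the delicate point; I would lean on the blackbox formulation exactly as the excerpt promises, checking only that our $(\varphi,\bbF)$ data meets its hypotheses.
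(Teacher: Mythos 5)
There is a genuine gap. The blackbox your argument needs is not ``Shelah produces $2^\kappa$ pairwise non-embeddable linear orderings of size $\kappa$''; it is Lemma~\ref{L.FaKa} (i.e., \cite[Lemma~6.4]{FaKa:NonseparableII}), whose hypothesis is that for every linear ordering $\Lambda$ of cardinality $\kappa$ there is an algebra in $\cK$ containing a $\varphi$-chain of order type $\Lambda$ that is \emph{weakly $(\aleph_1,\varphi)$-skeleton like}. Your proposal never mentions, let alone verifies, this condition, and the entire technical content of the paper's proof is exactly that verification: given $\bar b\in \bbF(J)^n$, use the Downwards L\"owenheim--Skolem Theorem to find a countable $J_{\bar b}\subseteq J$ with $\bar b\in \bbF(J_{\bar b})$, and then use functoriality of $\bbF$ to produce an isomorphism $\bbF(J_{\bar b}\cup\{x\})\to \bbF(J_{\bar b}\cup\{y\})$ fixing $\bbF(J_{\bar b})$ and sending $\bar a_x$ to $\bar a_y$ whenever no $z\in J_{\bar b}$ lies $\prec_\varphi$-between $x$ and $y$; quantifier-freeness of $\varphi$ then gives $\varphi(\bar a_x,\bar b)=\varphi(\bar a_y,\bar b)$ and $\varphi(\bar b,\bar a_x)=\varphi(\bar b,\bar a_y)$. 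This is where functoriality and quantifier-freeness are actually used, and it is a local statement about how arbitrary elements of the algebra interact with the chain --- not a statement about isomorphisms between different algebras.

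In place of this, you assert that an abstract isomorphism $\bbF(J)\to\bbF(J')$ must send one $\varphi$-chain to something ``cofinally interleaved'' with the other, forcing an order-isomorphism on club-many pieces, and you call this ``a standard reflection/club argument.'' It is not: an isomorphism of \cstar-algebras carries no a priori relationship to the chosen generating chains, and establishing that isomorphism of the algebras constrains the order types is precisely the deep conclusion of the non-structure machinery (\cite{FaSh:Dichotomy}), not an input one can wave at. In effect you have inverted the architecture: you treat the hard transfer step as routine and the routine step (manufacturing many orderings) as the blackbox. The same gap recurs in your treatment of the ``in addition'' clause; the paper instead observes that, $\varphi$ being quantifier-free, the chain remains a $\varphi$-chain in $A\otimes\bbF(J)$ and repeats the skeleton-like verification with $A\otimes\bbF(J')$ in place of $\bbF(J')$, then applies Lemma~\ref{L.FaKa} again.
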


The assumption that $\varphi$ is quantifier-free can be replaced by the assumption that for $I\subseteq J$ the inclusion of $\bbF(I)$ into $\bbF(A)$ is sufficiently elementary (more precisely, that it preserves the values of $\varphi$). 

The proof of Lemma~\ref{L.nonclassification} uses the the following definition and  Lem\-ma~\ref{L.FaKa} stated below.

\begin{definition} Suppose that $n\geq 1$ and $\varphi(x,y)$ is a $2n$-ary formula in which $\bar x$ and $\bar y$ are of the same sort. A $\varphi$-chain $\cC=(\bar a_x\vert x\in I)$ in a \cstar-algebra $A$ is \emph{weakly $(\aleph_1,\varphi)$-skeleton like inside $A$} if for every $\bar a\in A^n$ there is a countable $I_{\bar a}\subseteq I$ with the following property.  If $x$ and $y$ are in $I$  and such that  $\bar a_x\prec_\varphi \bar a_y$ and no $z\in I_{\bar a}$ satisfies $\bar a_x\prec_\varphi\bar a_z \prec_\varphi \bar a_y$, then $\varphi^A(\bar a_x,\bar  a)=\varphi^A(\bar a_y,\bar a)$ and $\varphi^A(\bar a, \bar a_x)=\varphi^A(\bar a,\bar a_y)$. 
\end{definition}

The following  is \cite[Lemma~6.4]{FaKa:NonseparableII}, proved by a heavy use of the results of~  \cite{FaSh:Dichotomy}. 

\begin{lemma} \label{L.FaKa} Suppose that $\cK$ is a subcategory of \cstar-algebras, $\varphi(x,y)$ is a $2n$-ary formula in which $\bar x$ and $\bar y$ are of the same sort,  and $\kappa$ is an uncountable cardinal. If for every linear ordering $\Lambda$ of cardinality $\kappa$ there is $B_\lambda\in \cK$ of density character $\kappa$ such that $B^n_1$ includes a $\varphi$-chain isomorphic to $\Lambda$ which is weakly $(\aleph_1,\varphi)$-skeleton like, then $\cK$ contains $2^\kappa$ nonisomorphic \cstar-algebras of density character $\kappa$.  \qed 
\end{lemma}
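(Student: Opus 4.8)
The plan is to deduce the statement from Lemma~\ref{L.FaKa}: I must produce, for a rich enough class of linear orders $\Lambda$ of cardinality $\kappa$, an algebra $B_\Lambda\in\cK$ of density character $\kappa$ whose unit ball contains a weakly $(\aleph_1,\varphi)$-skeleton like $\varphi$-chain isomorphic to $\Lambda$, and then invoke the non-structure machinery packaged in Lemma~\ref{L.FaKa}. The obvious candidate is $B_\Lambda=\bbF(\Lambda)$ taken with its generating $\varphi$-chain $(\bar a_x\mid x\in\Lambda)$, and I would use throughout that $\bbF$ carries order embeddings to injective (hence isometric) $^*$-homomorphisms sending $\bar a_x$ to $\bar a_x$, so that the $\bbF$ of a suborder is literally the generated \cstar-subalgebra.

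First I would pin down the density character. Since $\varphi$ is a formula it is uniformly continuous on the relevant bounded domain, with some modulus $\omega$; if two chain elements $\bar a_x,\bar a_y$ with $x<y$ were within $\delta$ of each other then $|\varphi^{B_\Lambda}(\bar a_x,\bar a_y)-\varphi^{B_\Lambda}(\bar a_y,\bar a_x)|\le 2\omega(\delta)$, contradicting $\varphi^{B_\Lambda}(\bar a_x,\bar a_y)=1$ and $\varphi^{B_\Lambda}(\bar a_y,\bar a_x)=0$ once $\delta$ is small. Hence the chain is uniformly separated, and $\bbF(\Lambda)$, generated by $\kappa$ such elements, has density character exactly $\kappa$.

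The core is weak skeleton-likeness, and here I would combine functoriality with the fact that $\varphi$ is quantifier-free. Given $\bar a\in\bbF(\Lambda)^n$, separability of singly generated \cstar-algebras yields a countable $I_0\subseteq\Lambda$ with $\bar a\in\cst(\bar a_w\mid w\in I_0)$. If $x,y\notin I_0$ realize the same cut over $I_0$ (no element of $I_0$ lies strictly between them), then the order isomorphism of $I_0\cup\{x\}$ onto $I_0\cup\{y\}$ fixing $I_0$ pointwise and sending $x\mapsto y$ is carried by $\bbF$ to a $^*$-isomorphism $\bbF(I_0\cup\{x\})\to\bbF(I_0\cup\{y\})$ fixing each $\bar a_w$, hence fixing $\bar a$, and sending $\bar a_x\mapsto\bar a_y$. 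Because injective $^*$-homomorphisms are isometric, quantifier-free formulas are absolute between a \cstar-algebra and its subalgebras and are preserved by $^*$-isomorphisms; chaining these equalities through $\bbF(I_0\cup\{x\})$ and $\bbF(I_0\cup\{y\})$ gives $\varphi^{\bbF(\Lambda)}(\bar a_x,\bar a)=\varphi^{\bbF(\Lambda)}(\bar a_y,\bar a)$, and symmetrically in the other argument. Thus $x\mapsto\varphi^{\bbF(\Lambda)}(\bar a_x,\bar a)$ is constant on each gap of $I_0$.

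The delicate point — which I expect to be the main obstacle — is the behaviour at the endpoints of gaps, i.e.\ at points of $I_0$. If some $x_0$ has both an immediate predecessor and an immediate successor in $\Lambda$, then testing against $\bar a=\bar a_{x_0}$ the pairs $(x_0^-,x_0)$ and $(x_0,x_0^+)$ cannot be separated by any $I_{\bar a}$ and would force $\varphi(\bar a_{x_0},\bar a_{x_0})$ to equal both $1$ and $0$, so the raw chain is \emph{not} skeleton-like for orders with successors. I would neutralize this by enlarging the witnessing set to $I_{\bar a}=I_0\cup S$, where $S$ adjoins to each $w\in I_0$ a strictly monotone sequence approaching $w$ from below and one from above. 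Such sequences exist precisely when $\Lambda$ is densely ordered, and then the $S$-points surround every point of $I_0$, so each valid pair has both endpoints outside $I_0$ in a common $I_0$-gap and the previous paragraph applies. I would therefore run the argument for densely ordered $\Lambda$ only; since there are $2^\kappa$ pairwise non-isomorphic dense linear orders of cardinality $\kappa$, this class is rich enough to feed Lemma~\ref{L.FaKa} and yields $2^\kappa$ nonisomorphic members of $\cK$ of density character $\kappa$. For the ``in addition'' clause I would replace $\bbF$ by the functor $\Lambda\mapsto A\otimes\bbF(\Lambda)$ and the chain by $(1\otimes\bar a_x)$, absorbing a countable dense subset of the separable $A$ into the base $I_0$ and using the isomorphisms $\id_A\otimes\bbF(\sigma)$, after which the identical argument produces $2^\kappa$ nonisomorphic algebras of the form $A\otimes B$.
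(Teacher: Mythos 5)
There is a genuine gap, and it is structural: you have not proved the statement at all. The statement you were asked to prove \emph{is} Lemma~\ref{L.FaKa}, and your opening sentence announces that you will ``deduce the statement from Lemma~\ref{L.FaKa}'' --- that is circular. What you have actually written is, in essence, the paper's proof of the \emph{other} lemma (Lemma~\ref{L.nonclassification}): you take a functor $\bbF$ from linear orders into $\cK$ with a generating chain, verify via Downwards L\"owenheim--Skolem, functoriality, and quantifier-freeness of $\varphi$ that the chain is weakly $(\aleph_1,\varphi)$-skeleton like, and then feed the result into Lemma~\ref{L.FaKa} as a black box. Note also that Lemma~\ref{L.FaKa} assumes no functor and no quantifier-free formula --- only that for each order $\Lambda$ of cardinality $\kappa$ \emph{some} $B_\Lambda\in\cK$ carries such a chain --- so your argument does not even reconstruct the hypothesis faithfully, let alone the conclusion.

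The missing mathematical content is precisely the non-structure step your proposal waves at with ``rich enough to feed'' the lemma: one must show that the assignment $\Lambda\mapsto B_\Lambda$ \emph{reflects} isomorphism on a family of $2^\kappa$ orders. Producing $2^\kappa$ pairwise nonisomorphic (dense) linear orders of cardinality $\kappa$ is easy but useless by itself, since nonisomorphism of the orders does not transfer to the algebras without an argument. The actual proof --- which this paper deliberately does not give, citing \cite[Lemma~6.4]{FaKa:NonseparableII}, itself resting heavily on \cite{FaSh:Dichotomy} --- extracts from an isomorphism $\Phi\colon B_\Lambda\to B_{\Lambda'}$ an order-theoretic invariant: the weak $(\aleph_1,\varphi)$-skeleton like condition guarantees that the $\varphi$-chain is sufficiently rigid inside the algebra that (roughly) the set of cuts of $\Lambda$ of uncountable cofinality on both sides, computed modulo a club filter, is an isomorphism invariant of $B_\Lambda$; Shelah-style constructions then supply $2^\kappa$ orders of cardinality $\kappa$ with pairwise distinct invariants. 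None of this appears in your proposal. (Your observation about successor points forcing $\varphi(\bar a_{x_0},\bar a_{x_0})$ to take two values, and the repair via densely ordered $\Lambda$, is a legitimate comment --- but about the proof of Lemma~\ref{L.nonclassification}, not about the lemma at hand.)
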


\begin{proof}[Proof of Lemma~\ref{L.nonclassification}] We first   verify that the $\varphi$-chain $\bar a_x$, for $x\in J$, is weakly $(\aleph_1,\varphi)$-skeleton like in $A=\bbF(J)$. To prove this, with $n$ such that $\varphi$ is $2n$-ary fix $\bar b\in A^n$ (of the appropriate sort). By the Downwards L\"owenheim--Skolem Theorem, there is a countable  $J_{\bar b}\subseteq J$ such that $\bar b$ belongs to $\bbF(J_{\bar n})$ (using functoriality to give meaning to this formula). Fix $x$ and $y$ such that $\bar a_x\prec_\varphi \bar a_y$ and no $z\in J_{\bar b}$ satisfies $\bar a_x\prec_\varphi\bar a_z \prec_\varphi \bar a_y$. By the functoriality of $\bbF$, there is then an isomorphism 
\[
\Phi\colon \bbF(J_{\bar b}\cup \{x\})
\to \bbF(J_{\bar b}\cup \{y\})
\]
that extends the identity on $\bbF(J_{\bar b})$ and sends $\bar a_x$ to $\bar a_y$. 

Since $\varphi$ is quantifier-free, this implies  $\varphi^A(\bar a_x,\bar  b)=\varphi^A(\bar a_y,\bar b)$ and $\varphi^A(\bar b, \bar a_x)=\varphi^A(\bar b,\bar a_y)$. If $J$  is uncountable, then the density character of $\bbF(J)$ is equal to $|J|$, and Lemma~\ref{L.FaKa} implies that there are $2^\kappa$ nonisomorphic algebras of the density character $\kappa$ and the prescribed $K$-theory.

For the second part, we only need to prove  that if a chain $\bar a_x$, for $x\in J$, is weakly $(\aleph_1,\varphi)$-skeleton like in $\bbF(J)$ then it is weakly  $(\aleph_1,\varphi)$-skeleton like in $B\otimes \bbF(J)$. Since $\varphi$ is quantifier-free, it is still a $\varphi$-chain. Let  $\bar b$ and $J_{\bar a}$ be as in the first part of the proof. Mimicking this proof, while replacing separable algebras of the form  $\bbF(J')$ in this proof with $B\otimes \bbF(J')$, one concludes the proof. 
\end{proof}

\begin{proof}[Proof of Theorem~\ref{T.many}] In order to use Lemma~\ref{L.nonclassification}, we define a functor that sends a linear ordering $J$ to an AM \cstar-algebra $A=\bbF(J)$. Fix~$J$. Suppose that the generalized integer associated with $A$ is $\textstyle n_A=\prod_j p(j)^{k(j)}$, let $\Gamma_A=\bigoplus_j \bigoplus_{l<k(j)} \bbZ/p(j)\bbZ$ and $X_A=\bigcup_j k(j)$. With $p$ as in the assumption of Theorem~\ref{T.many}, let 
$\Gamma_J=\bigoplus_{x\in J} \bigoplus_{i<2} \bbZ/p\bbZ$ and 
\[
\textstyle \Gamma=\Gamma_A\oplus \Gamma_J. 
\]
Let $g(x,j)$, for $x\in J$ and $j<2$,  denote the generators of $\Gamma_J$. 
Define~$\Theta$ on the generators of $\Gamma$  as follows. 
	\begin{enumerate}
	\item\label{1.main} The restriction of $\Theta$ to $\Gamma_A$ is such that $\CCR_{(X_A,\Gamma_A,\Theta\rs \Gamma_A)}\cong A$ (as in the proof of Theorem~\ref{T.non-uniqueness}). 
	\item \label{2.main} $\Theta(g(x,0),g(y,1))=\lambda=\overline{\Theta(g(y,1), g(x,0))}$ if $x$ and $y$ are in $J$ and $x\leq y$.
	\item $\Theta(g,h)=1$ for all generators for which $\Theta(g,h)$ is not determined by \eqref{1.main} or \eqref{2.main}.    	
	\end{enumerate}
Let 
\[
\bbF(J)=\CCR_{(X_A\cup J, \Gamma, \Theta)}. 
\]
The functoriality of $\bbF$ follows by Proposition~\ref{P.funct}. 
Consider the following formula
\[
\varphi(a,b,c,d)=\frac 12 \|ad-da\|.
\]
Then  by \eqref{2.main} we have  
$\varphi^{\bbF(J)}(u_{g(x,0)}, u_{g(x,1)},u_{g(y,0)}, u_{g(y,1)})=1$ if $x\leq y$ and $0$ otherwise. 
Therefore  $(u_{g(x,0)}, u_{g(x,1)})$, for $x\in J$, form a $\varphi$-chain isomorphic to $J$. Since these unitaries generate $\bbF(J)$, by Proposition~\ref{P.funct} the assumptions of Lemma~\ref{L.nonclassification} are satisfied. Therefore for every uncountable $\kappa$ there are $2^\kappa$ nonisomorphic  algebras of the form $A\otimes \bbF(J)$ of density character $\kappa$. 

\begin{claim} For every linear order $J$, the algebra $\bbF(J)$ is AM.  
\end{claim}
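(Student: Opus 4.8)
The plan is to realize $\bbF(J)=\CCR_{(X_A\cup J,\Gamma,\Theta)}$ as an inductive limit of full matrix algebras. Every generator of $\Gamma$ has finite order ($p(j)$ or $p$), so $\Gamma$ is locally finite and Lemma~\ref{L.AF} presents $\bbF(J)$ as the inductive limit of the finite-dimensional subalgebras $B_F=\cst(u_g\mid g\in\Gamma_F)\cong\CCR_{(F,\Gamma_F,\Theta\rs\Gamma_F)}$, with $F$ ranging over the finite subsets of the generating set $\cG$. Hence it suffices to find a cofinal family $\cF$ of such $F$ for which $B_F$ is a full matrix algebra. I would take $\cF$ to consist of those $F$ whose intersection $F_A$ with the generators of $\Gamma_A$ is paired exactly as in the proof of Theorem~\ref{T.non-uniqueness}, and whose intersection $F_J$ with the generators of $\Gamma_J$ contains both $g(x,0)$ and $g(x,1)$ whenever it contains either of them. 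Closing an arbitrary finite set up under these two requirements adds only finitely many generators, so $\cF$ is cofinal.

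Fix $F\in\cF$. The last clause in the definition of $\Theta$ forces $\Theta(g,h)=1$ whenever $g$ is a generator of $\Gamma_A$ and $h$ is a generator of $\Gamma_J$, so Lemma~\ref{L.tensor} yields $B_F\cong B_{F_A}\otimes B_{F_J}$, and it then suffices to see that each factor is a full matrix algebra. For $B_{F_A}$ this is exactly the argument in the proof of Theorem~\ref{T.non-uniqueness}: by the pairing condition on $F_A$, Lemmas~\ref{L.tensor} and~\ref{L.UHF} show that $B_{F_A}$ is a full matrix algebra.

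The one genuinely new step is $B_{F_J}$, where the commutation is governed by the \emph{staircase} pattern $\Theta(g(x,0),g(y,1))=\lambda$ for $x\le y$ (and $=1$ for $x>y$) rather than by the clean tensor pattern of Lemma~\ref{L.UHF}. Writing $x_1<\dots<x_m$ for the elements of $J$ occurring in $F_J$, I would change generators by setting
\[
\tilde g(x_k,0)=g(x_k,0)\,g(x_{k+1},0)^{-1}\quad(k<m),\qquad \tilde g(x_m,0)=g(x_m,0),
\]
while keeping each $g(x_k,1)$. A direct computation with the bicharacter (Lemma~\ref{L.CCR.1}) gives $\Theta(\tilde g(x_k,0),g(x_l,1))=\lambda$ when $k=l$ and $=1$ otherwise, while the $\tilde g(x_k,0)$ commute among themselves and the $g(x_k,1)$ commute among themselves. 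Moreover these elements still generate $\Gamma_{F_J}$, since the telescoping product $\tilde g(x_k,0)\tilde g(x_{k+1},0)\cdots\tilde g(x_m,0)$ recovers $g(x_k,0)$. Consequently the unitaries $u_{\tilde g(x_k,0)}$ and $u_{g(x_k,1)}$ generate $B_{F_J}$ and satisfy precisely the standard relations of Lemma~\ref{L.UHF} with $n=p$ (within each pair the commutation constant is $\lambda$, and distinct pairs commute), so the \emph{moreover} clause of that lemma gives $B_{F_J}\cong\bigotimes_{k=1}^m M_p(\bbC)$, a full matrix algebra.

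Putting the pieces together, $B_F\cong B_{F_A}\otimes B_{F_J}$ is a tensor product of two full matrix algebras and hence itself a full matrix algebra, and since $\cF$ is cofinal this exhibits $\bbF(J)$ as an inductive limit of full matrix algebras, i.e.\ as an AM algebra. I expect the main obstacle to be exactly the middle step: the staircase bicharacter on $\Gamma_J$ is not in tensor form, and the telescoping substitution above is the device that diagonalizes it into the standard symplectic pattern, playing here the role that the substitution $h=g(*)\prod_{j<m}g(\alpha(j),0)$ plays in the proof of Theorem~\ref{T.non-uniqueness}.
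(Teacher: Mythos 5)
Your proof is correct, and it follows the same overall strategy as the paper: write $\bbF(J)$ as an inductive limit of the finite-dimensional subalgebras $B_F$ over a cofinal family of finite generator sets, split off the $\Gamma_A$ part by Lemma~\ref{L.tensor}, and diagonalize the staircase commutation pattern on $\Gamma_J$ by a telescoping change of generators so that the `moreover' clause of Lemma~\ref{L.UHF} applies. The one substantive difference is \emph{where} the telescoping happens, and here your version is in fact the correct one. The paper keeps the generators $g(x(j),0)$ and replaces $g(x(j),1)$ by $g'(x(j),1)=g(x(j),1)\prod_{k<j}g(x(k),1)$; as printed, this does not do the job: for $i<j$ one computes $\Theta\bigl(g'(x(i),0),g'(x(j),1)\bigr)=\lambda^{\,j-i+1}$, which is in general different from $1$ (e.g., $p=3$ and $j=i+1$ gives $\lambda^2$), contradicting the paper's assertion that the new generators in distinct pairs commute. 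The correct substitution on that side would be the adjacent difference $g'(x(j),1)=g(x(j),1)\,g(x(j-1),1)^{-1}$; your substitution $\tilde g(x_k,0)=g(x_k,0)\,g(x_{k+1},0)^{-1}$ is the mirror image of this on the $0$-side, and your computation verifying that it produces exactly the standard pattern of Lemma~\ref{L.UHF} is right. A second point in your favour: you treat the tensor factor coming from $\Gamma_A$ explicitly (closing $F$ under the pairing, then Lemma~\ref{L.tensor} and the argument of Theorem~\ref{T.non-uniqueness}), whereas the paper's proof of the Claim only considers $F\Subset J$ and thus silently ignores the $\Gamma_A$ generators, even though they are part of $\bbF(J)$ by its definition. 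So your argument is not just a faithful reconstruction; it repairs the two small defects in the paper's own write-up.
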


\begin{proof} As before, for $F\Subset J$ let $B_F=\cst(\{g(x,i)\vert x\in F, i<2\})$.  Since $\bbF(J)$ is the inductive limit of such $B_F$, it will suffice to prove that  $B_F$ is isomorphic to $M_{p^{|F|}}(\bbC)$ for every $F\Subset J$.    

Fix $F\Subset J$ and let $x(j)$, for $j<k$, be its increasing enumeration. For $j<k$ and $i<2$ define $g'(j,i)$ as follows. 
\begin{enumerate}
\item $g'(x(j),0)=g(x(j),0)$ for all $j<k$.  
\item $g'(x(0), 1)=g(x(0),1)$. 
\item $g'(x(j),1)=g(x(j),1)\prod_{k<j} g(x(k), 1)$, for $1\leq j<k$. 
\end{enumerate}
Then $g'(x(j), i)$, for $j<k$ and $i<2$, generate $B_F$. Also, \eqref{2.main} implies that  $g'(x(j),i)$ and $g'(x(j'),i')$ commute when $j\neq j'$, while 
\[
g'(x(j),0) g'(x(j),1)=\lambda 
g'(x(j),1) g'(x(j),0)
\]
for all $j<k$. Since $\Gamma$ is locally finite, the `moreover' part of Lemma~\ref{L.UHF} now implies that $B_F$ is a full matrix algebra, as required. 
\end{proof} 

Every AM algebra is, being a unital inductive limit of monotracial \cstar-algebras,  monotracial.  Lemma~\ref{L.K-theory} implies that each  $\bbF(J)$ has the  same $K_0$ and $K_1$ groups as $A$, and therefore the same Elliott invariant as $A$. This completes the proof. 
 \end{proof}

\section{Concluding remarks}

In the present paper we explored constructions of nonseparable CCR algebras from bicharacters on uncountable direct sums of cyclic groups. Since the set-theoretic study of infinite abelian groups has a long and distinguished history (e.g., \cite{shelah1974infinite}, \cite{eklof1997set}, \cite{magidor1994does}), bicharacters on more interesting groups could lead to even more intriguing examples of nonseparable \cstar-algebras.  An alternative route towards our results worth exploring (suggested by Ilan Hirshberg) could proceed via uncountable spin systems (see \cite{arveson2003structure}).   

Another related source of nonseparable \cstar-algebras are `twisted' cocycle crossed products by an uncountable discrete group  (\cite{packer1989twisted}), already used  (implicitly) in \cite[\S 6]{FaKa:Nonseparable} (see \cite[Remark 6.9]{FaKa:Nonseparable}).

For the definitions of classifiable \cstar-algebras and Elliott invariant see e.g.,  \cite{Ror:Classification}.

\begin{question} \label{Q.main} Suppose that $A$ is a nuclear, simple, separable \cstar-algebra and $\kappa$ is an uncountable cardinal. 
\begin{enumerate}
\item \label{1.Q.Main} Is it true that there are $2^\kappa$ nonisomorphic simple nuclear  \cstar-algebras of density character $\kappa$ with the Elliott invariant equal to $\Ell(A)$? 
\item \label{2.Q.Main} Is it true that there are at least two nonisomorphic simple nuclear \cstar-algebras of density character $\kappa$ with the Elliott invariant equal to $\Ell(A)$? 
\item\label{3.Q.main} Is it true that there exists a  simple nuclear \cstar-algebras of density character $\kappa$ with the Elliott invariant equal to $\Ell(A)$? 
\end{enumerate}
\end{question}

If $A$ is as in Question~\ref{Q.main}, $D$ is  a strongly self-absorbing \cstar-algebra (see \cite{ToWi:Strongly}, but all we need is that $\bigotimes_{\aleph_0}D\cong D$) and $A$   tensorially absorbs $D$,  then Question~\ref{Q.main} \eqref{3.Q.main} has a positive answer. The algebra $A_1=A\otimes \bigotimes_\kappa D$ has the same Elliott invariant as $A$ and density character $\kappa$. This is because by the assumption on $D$ club many separable subalgebras of $A_1$ are isomorphic to $A\otimes D\cong A$, and by the reflection argument as in Lemma~\ref{L.K-theory}, the Elliott invariant of $A_1$ is equal to that of $A$. In particular, Question~\ref{Q.main} \eqref{3.Q.main} has a positive answer for  every classifiable \cstar-algebra which absorbs the Jiang--Su algebra $\cZ$. We don't know the answer to Question~\ref{Q.main} \eqref{3.Q.main} for  R\o rdam's simple \cstar-algebra with both finite and infinite projections (\cite{Ror:Simple}) or for Villadsen's algebras with perforated K-theory groups (see \cite{Ror:Classification}). 

If $A$ tensorially absorbs both $\cZ$ and some full matrix algebra $M_n(\bbC)$ (e.g., if $A$ is a  UHF algebra such that for some prime $p$ every element of $K_0(A)$ is divisible by $p$, as in the assumptions of Theorem~\ref{T.non-uniqueness}),  then Question~\ref{Q.main} \eqref{2.Q.Main} has a positive answer, as witnessed by  $A\otimes \bigotimes_\kappa \cZ$ and $A\otimes\bigotimes_\kappa M_n(\bbC)$  (a proof of this is analogous to the proofs in \cite[\S 3]{FaKa:Nonseparable}). 

	An alternative approach to Question~\ref{Q.main} \eqref{2.Q.Main} proceeds by finding a \cstar-algebra with the same invariant as a given $\cZ$-absorbing $A$ that is tensorially prime (i.e., not isomorphic to the tensor product of two infinite-dimensional \cstar-algebras). This was done for some AF and some  purely infinite, simple \cstar-algebras in \cite[Theorem~B (1) and (4)]{suzuki2021rigid}.   In Theorem C of the same paper, a monotracial, tensorially prime, nuclear, simple \cstar-algebra with uncountable $K_1$-group was constructed. \cstar-algebras constructed in  \cite{suzuki2021rigid} are  associated with iterated wreath products of groups and they even have no nontrivial central sequences. Every infinite-dimensional separable AM algebra has nontrivial central sequences (for much stronger results see \cite{ando2016non} and \cite{enders2021commutativity}).   The first examples of (necessarily nonseparable) AM algebras with this property  were constructed in \cite{FaHaKaTi} using weakenings of the  Continuum Hypothesis. These algebras have the same Elliott invariant as $M_{2^\infty}$. 

Apparently the simplest instance of Question~\ref{Q.main} \eqref{1.Q.Main} not resolved by our Theorem~\ref{T.1}  is the case of $\bigotimes_{p\text{ prime}} M_p(\bbC)$. 
Probably the most interesting case is when $A$ is the Jiang--Su algebra~$\cZ$, whose Elliott invariant is equal to that of the complex numbers (see \cite{JiangSu}, also \cite{ghasemi2019strongly} and \cite{schemaitat2019jiang} for most recent treatments).

A programme analogous to one given by  Question~\ref{Q.main} was pursued in \cite{vaccaro2017trace} and \cite{suri2017naimark}. In these papers the authors studied the variety of the invariants of counterexamples to Naimark's Problem (using Jensen's diamond).

Graph CCR algebras provided the first example of a nuclear, simple \cstar-algebra with the property that its automorphism group did not act transitively on its pure state space (\cite{Fa:Graphs}, see \cite[\S 9.4]{Fa:STCstar}). It is likely that the CCR algebras associated with bicharacters on uncountable abelian groups will find other uses. For example, every graph CCR algebra is a complexification of an operator algebra on a real Hilbert space, and therefore isomorphic to its opposite algebra. The only known examples of simple, nuclear, \cstar-algebras not isomorphic to their opposites were constructed using Jensen's diamond in \cite{farah2016simple}. It is possible that, with an appropriately chosen group and a bicharacter,  the CCR algebras considered in this paper may provide a ZFC example. 

Our examples can also be used to construct $2^\kappa$ nonisomorphic hyperfinite II$_1$ factors with predual of density character $\kappa$, but this has already been proved in \cite{FaKa:NonseparableII} (see also  \cite{suzuki2021rigid}). However, the  II$_1$ factors obtained from our CCR algebras may have other interesting properties. For example, it is not known whether a hyperfinite II$_1$ factor not isomorphic to its opposite can be constructed in ZFC (see \cite{farah2020rigid} for a construction using Jensen's diamond). 

\section*{Acknowledgments}
Partially supported by NSERC. This paper is a part of the second author's PhD thesis. We would like to thank the anonymous referee for  useful comments and to Pavlos Motakis for very helpful advice on Banach spaces.

%\bibliographystyle{plain}
%\bibliography{ifmainbib}
\end{document}